\newtheorem{theoremint}{Theorem}[section]
\newtheorem{questionsec}[theoremint]{Question}
\newtheorem{theorem}{Theorem}[subsection]
\newtheorem{lemma}[theorem]{Lemma}
\newtheorem{lemmaint}[theoremint]{Lemma}
\newtheorem{proposition}[theorem]{Proposition}
\newtheorem{conjecture}[theorem]{Conjecture}
\theoremstyle{corollary}
\newtheorem{corollary}[theorem]{Corollary}
\newtheorem{corollaryint}[theoremint]{Corollary}
\theoremstyle{definition} 
\newtheorem{definition}[theorem]{Definition}
\newtheorem{definition-lemma}[theorem]{Definition-Lemma}
\newtheorem{example}[theorem]{Example}
\newtheorem{exampleint}[theoremint]{Example}
\theoremstyle{remark}
\newtheorem{remark}[theorem]{Remark}
\numberwithin{equation}{section}
\newcommand{\C}{\mathbb{C}}
\newcommand{\R}{\mathbb{R}}
\newcommand{\Q}{\mathbb{Q}}
\newcommand{\mc}{\mathcal}
\def\P{\mathbb{P}}
\DeclareMathOperator{\B}{\textbf{B}}
\def\Proj{\operatorname{Proj}}
\def\Cox{\operatorname{Cox}}
\def\mult{\operatorname{mult}}
\def\Mov{\operatorname{Mov}}
\def\Num{\operatorname{N}}
\def\Exc{\operatorname{Exc}}
\def\NE{\operatorname{NE}}
\def\Sym{\operatorname{Sym}}
\title[Factorization of anticanonical maps of Fano type varieties]{Factorization of anticanonical maps \\of Fano type varieties}
\begin{document}

\author{Sung Rak Choi}
\address{Center for Geometry and Physics, Institute of Basic Science (IBS), Pohang, Korea}
\email{sungrakc@ibs.re.kr}

\author{DongSeon Hwang}
\address{Department of Mathematics, Ajou University, Suwon, Korea}
\email{dshwang@ajou.ac.kr}

\author{Jinhyung Park}
\address{Department of Mathematical Sciences, KAIST, Daejeon, Korea}
\curraddr{School of Mathematics, Korea Institute for Advanced Study, Seoul, Korea}
\email{parkjh13@kaist.ac.kr}

\thanks{S.R. Choi is partially supported by the IBS (CA1305-02).
D. Hwang is partially supported by Basic Science Research Program through the National Research Foundation of Korea (NRF) funded by the Ministry of Education (2011-0022904).
J. Park is partially supported by the National Research Foundation of Korea (NRF) grant funded by the Korea government (MSIP) (No. 2013042157).}

\subjclass[2010]{Primary 14J45, Secondary 14E30.}
\keywords{anticanonical model, variety of Fano type, variety of Calabi-Yau type, Zariski decomposition, minimal model program, redundant contraction.}

\begin{abstract}
The purpose of the present paper is to generalize Sakai's work on anticanonical models of rational surfaces to varieties of Fano type.
We first prove a characterization of Fano type varieties using the singularities of anticanonical models.
Secondly,  we study the decomposition of the anticanonical map using the $K_X$-minimal model program.
\end{abstract}

\maketitle
\tableofcontents

\section{Introduction}

Throughout the paper, we only consider normal projective varieties defined over the field $\C$ of complex numbers.
To understand the structure of the varieties of Fano type, which form an important class in birational geometry, we study their anticanonical models. In \cite{HP2}, Hwang and Park  characterize surfaces of Fano type via their anticanonical models, and they reduce the classification of log del Pezzo pairs to that of log del Pezzo surfaces. Their work is based on Sakai's results (\cite{S}) on the anticanonical models of rational surfaces. In this paper, we consider a higher dimensional  generalization of Sakai's work.

First, we characterize the varieties of Fano type via the singularities of anticanonical models.
This gives a generalization of \cite[Theorem 1.1]{HP2} (cf. \cite[Theorem 4.3]{S}). We will prove Theorem \ref{antimodel} in Section \ref{antimodelsec}.

\begin{theoremint}\label{antimodel}
Let $X$ be a $\Q$-Gorenstein normal projective variety such that $-K_X$ is big and $R(-K_X)$ is finitely generated. Then $X$ is of Fano type if and only if the anticanonical model $Y=\Proj R(-K_X)$ is a klt Fano variety.
\end{theoremint}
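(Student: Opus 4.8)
The plan is to pass to a common resolution $f\colon W\to X$, $g\colon W\to Y$ of the anticanonical map $\pi\colon X\dashrightarrow Y$ and to compare discrepancies, using the Zariski-type decomposition of $f^{*}(-K_X)$ supplied by the finite generation of $R(-K_X)$. Choosing $W$ large enough, one has
\[
f^{*}(-K_X)=g^{*}\mathcal{O}_Y(1)+N,\qquad N\ge 0,\quad \Supp N\subseteq\Exc(g),
\]
with $N$ the stable fixed part of $f^{*}(-K_X)$. Since $Y=\Proj R(-K_X)$ is cut out by sections of $-mK_X$ on $X$, the map $\pi$ only contracts divisors and extracts none; equivalently every $f$-exceptional prime divisor is $g$-exceptional. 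Writing $K_W=f^{*}K_X+E_f$ and $K_W=g^{*}K_Y+E_g$ with $E_f,E_g$ the discrepancy divisors and pushing $-K_W=f^{*}(-K_X)-E_f$ forward by $g$, the term $g_{*}E_f$ vanishes, so $-K_Y=\mathcal{O}_Y(1)$; in particular $-K_Y$ is $\Q$-Cartier and ample, and it remains only to decide when $Y$ is klt. Comparing the two formulas for $-K_W$ gives $E_g=E_f-N$, so for a prime divisor $F$ over $Y$, realized on a suitable $W$, the discrepancy is
\[
a(F,Y)=\mult_F(E_f)-\mult_F(N).
\]

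To prove ``$X$ of Fano type $\Rightarrow$ $Y$ klt Fano'', take $\Delta_X\ge 0$ with $(X,\Delta_X)$ klt and $-(K_X+\Delta_X)$ ample; note $\Delta_X$ is $\Q$-Cartier. Then $L:=f^{*}(-(K_X+\Delta_X))$ is nef and big and $f^{*}(-K_X)=L+f^{*}\Delta_X$, so Kodaira's lemma applied to $L$ forces $N\le f^{*}\Delta_X$. As $(X,\Delta_X)$ is klt, the subboundary $\Delta_W:=f^{*}(K_X+\Delta_X)-K_W$ has all coefficients $<1$, and $E_f=f^{*}\Delta_X-\Delta_W$; substituting into the discrepancy formula,
\[
a(F,Y)=\mult_F(f^{*}\Delta_X)-\mult_F(\Delta_W)-\mult_F(N)\ \ge\ -\mult_F(\Delta_W)\ >\ -1,
\]
so $Y$ is klt, hence a klt Fano variety.

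For the converse ``$Y$ klt Fano $\Rightarrow$ $X$ of Fano type'', the obstacle is that the positive part of $-K_X$ need not be semiample, or even nef, on $X$, so instead of producing a boundary directly I would extract, over $Y$, exactly the $\pi$-exceptional divisors $v_1,\dots,v_r$ of $X$. By the discrepancy formula and $N\ge 0$, each such $v_k$ is a divisorial valuation over $Y$ with $-1<a(v_k,Y)\le 0$ (the lower bound because $Y$ is klt), i.e.\ a klt place; so by the results of Birkar--Cascini--Hacon--McKernan there is a projective birational morphism $h\colon X^{+}\to Y$ from a $\Q$-factorial variety whose exceptional divisors $E_1,\dots,E_r$ realize precisely these valuations. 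Then $X$ and $X^{+}$ have the same prime divisors, so $X^{+}\dashrightarrow X$ is an isomorphism in codimension one. Setting $B^{+}:=\sum_k\bigl(-a(v_k,Y)\bigr)E_k$, a boundary since $-1<a(v_k,Y)\le 0$, one has $K_{X^{+}}+B^{+}=h^{*}K_Y$ by construction, so $(X^{+},B^{+})$ is crepant over the klt pair $(Y,0)$, hence klt, and $-(K_{X^{+}}+B^{+})=h^{*}(-K_Y)$ is semiample and big. Taking a general $G\sim_{\Q}(1-\varepsilon)h^{*}(-K_Y)$ for small $\varepsilon>0$, the pair $(X^{+},B^{+}+G)$ is klt with $-(K_{X^{+}}+B^{+}+G)$ semiample and big, so $X^{+}$ is of Fano type by the standard criterion; since $X^{+}\dashrightarrow X$ is small and Fano type is preserved under small birational maps, so is $X$.

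I expect the converse to be the main difficulty: one must be certain that the extraction $h\colon X^{+}\to Y$ genuinely reconstructs $X$ up to codimension one, and this is exactly where the hypothesis that $Y$ is the anticanonical model $\Proj R(-K_X)$ — rather than merely some Fano type variety birational to $X$ — is used, as it makes $\pi$ a birational contraction and pins down which divisors must be extracted. Some such hypothesis is essential: a blow-up of $\P^{2}$ at sufficiently many points maps birationally to $\P^{2}$ yet is not of Fano type.
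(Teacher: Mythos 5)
Your proposal is correct. The ``only if'' direction is essentially the paper's: both pass to a common log resolution $W\to X,Y$, use the good Zariski decomposition $f^*(-K_X)=P+N$ with $P=g^*(-K_Y)$, and bound the discrepancies of $Y$ by comparing with a klt boundary $\Delta_X$ on $X$. Two small remarks: your key inequality $N\le f^*\Delta_X$ is the maximality of the positive part of the Zariski decomposition (Remark~\ref{remk-Zariski decomp}(2)), not Kodaira's lemma; and the preliminary claim that $\pi$ extracts no divisors (used to compute $-K_Y=\mathcal{O}_Y(1)$ and, in the converse, to match the divisors of $X$ and $X^+$) deserves a line of justification: an $f$-exceptional prime divisor $E\not\subseteq\Supp N$ is covered by $f$-contracted curves $C\not\subseteq\Supp N$ with $P\cdot C=-N\cdot C\le 0$, hence $P\cdot C=0$ and $E$ is also $g$-exceptional, while the components of $\Supp N$ are $g$-exceptional by Lemma~\ref{zarviaanti}.

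The ``if'' direction, however, takes a genuinely different route, closer to the Cascini--Gongyo proof the paper alludes to than to the paper's own. The paper never leaves $X$: it picks a general $P_1\in|P|_\Q$, notes that $P_1+N+F$ has snc support with all coefficients $<1$ because $Y$ is klt, sets $\Delta:=f_*(P_1+N+F)$, and verifies directly that $(X,\Delta)$ is a klt Calabi--Yau pair, so $X$ is of Fano type by Lemma~\ref{cy}. You instead apply \cite[Corollary 1.4.3]{BCHM} to extract over $Y$ precisely the $\pi$-exceptional valuations $v_k$ (your formula $a(v_k,Y)=-\mult_{v_k}N\in(-1,0]$ qualifies them), produce a $\Q$-factorial $h\colon X^+\to Y$ crepant over $(Y,0)$ and small over $X$, show $X^+$ is Fano type, and transport this across the small map. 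This buys a clean structural picture --- $X$ is exhibited, up to small modification, as a crepant partial resolution of its anticanonical model --- at the price of the BCHM extraction theorem together with the fact that Fano type descends along small birational maps, which you should argue on the spot (push a klt Calabi--Yau boundary across the small map and apply Lemma~\ref{cy}) rather than invoke Corollary~\ref{cor-CY is closed}, since that corollary is derived from the very theorem under proof. The paper's construction is more elementary, needing only Bertini and the Zariski decomposition; your version makes visible the connection to the terminalization approach of Cascini--Gongyo.
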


See Section \ref{prelimsec} for definitions of the varieties of Fano type and anticanonical models.
For any variety of Fano type, we can always take a $\Q$-factorialization, and hence, the $\Q$-Gorensteinness assumption is not restrictive.
Our proof of Theorem \ref{antimodel} heavily relies on the Zariski decomposition.
More precisely, for the `if' direction, we construct an explicit boundary divisor $\Delta$ on $X$ such that $(X, \Delta)$ is a log Fano pair
from the negative part of the Zariski decomposition of the anticanonical divisor as in \cite{HP2}.
Theorem \ref{antimodel} was also obtained by Cascini and Gongyo (see \cite[Theorem 1.1]{CG}) using different methods. For `only if' direction, they use $-K_X$-MMP, and for `if' direction,
they use terminalization to construct a boundary divisor instead of the Zariski decomposition.

The quickest way to construct the anticanonical model $Y$ of a $\Q$-factorial variety $X$ of Fano type is to run $-K_X$-MMP.
By running the $-K_X$-MMP, we obtain a birational map $X\dashrightarrow Y'$ such that $-K_{Y'}$ is nef. Since $Y'$ is a Mori dream space, the nef divisor $-K_{Y'}$ is semiample. Thus we get the morphism $\varphi_{|-mK_{Y'}|} : Y' \rightarrow Y=\Proj R(-K_X)$ to the anticanonical model $Y$.

We propose another way to construct a birational map to the anticanonical model, which generalizes the surface case obtained by Sakai (\cite[Proposition 4.2 and Theorem 4.3]{S}) into higher dimensional cases.

\begin{theoremint}\label{antimap}
Let $X$ be a $\Q$-Gorenstein normal projective variety of Fano type, and let $f: X \dashrightarrow Y=\Proj R(-K_X)$ be the anticanonical map. Then we can take $\Q$-factorial varieties $X_q$, $X'$, $X'_{mint}$, and $X'_{nrd}$ having the anticanonical model $Y$ with the following commutative diagram

$$
\xymatrix{
& X'_{mint} \ar@{-->}[r]^{r}  \ar[d]_{\varphi} \ar[rd]^{\widetilde{f}} & X'_{nrd} \ar[d]^{\pi}\\
X_q \ar@{-->}[r]^s \ar[d]_q   & X' \ar[r]^{f'} & Y \\
X \ar@{-->}@/_1pc/[rru]^f &  &
}
$$

\noindent where $q$ is a $\Q$-factorialization, $s$ is a finite sequence of log flops,
$f'$ and $\widetilde{f}$ are anticanonical morphisms, $\varphi$ and $\pi$ are minimal terminal resolutions, and $r$ is a redundant minimal model program to a non-redundant model.
\end{theoremint}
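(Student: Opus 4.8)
The plan is to build the diagram stage by stage, carrying along at each step the three properties that make the statement meaningful: being $\mathbb{Q}$-factorial, being of Fano type, and having anticanonical model $Y$. Throughout I use that a variety of Fano type is a Mori dream space, so on every variety below the Cox ring is finitely generated, every minimal model program runs and terminates, small $\mathbb{Q}$-factorializations exist, and every nef divisor is semiample. First I would take a small $\mathbb{Q}$-factorialization $q\colon X_q\to X$ (it exists since $X$ is klt). As $q$ is an isomorphism in codimension one, $-K_{X_q}=q^{*}(-K_X)$, hence $R(-K_{X_q})\cong R(-K_X)$, so $X_q$ is again $\mathbb{Q}$-factorial of Fano type with anticanonical model $Y$, and the curved arrow $f$ factors through $q$ as a rational map.

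Next I would produce $s$. Since $X_q$ is of Fano type, we may choose a boundary $\Delta_q\ge 0$ with $(X_q,\Delta_q)$ klt and $K_{X_q}+\Delta_q\sim_{\mathbb{Q}}0$, for instance $\Delta_q=\Delta+H$ where $(X_q,\Delta)$ is klt with $-(K_{X_q}+\Delta)$ ample and $0\le H\sim_{\mathbb{Q}}-(K_{X_q}+\Delta)$ is general with small coefficients. Write $-K_{X_q}=P+N$ for the decomposition into its movable part $P$ and its fixed (divisorial base locus) part $N\ge 0$. As $X_q$ is a Mori dream space, $P$ lies in $s^{*}\Nef(X')$ for some small $\mathbb{Q}$-factorial modification $s\colon X_q\dashrightarrow X'$, so $s$ is a finite chain of flops and $P':=s_{*}P$ is nef, hence semiample, on $X'$. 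Because $-K_{X_q}\sim_{\mathbb{Q}}\Delta_q$, on every step of $s$ the divisor $K+\Delta_q$ is numerically trivial over the base, so each step is a log flop and $X'$ is $\mathbb{Q}$-factorial of Fano type; since flops preserve the movable/fixed decomposition, $-K_{X'}=P'+N'$ with $N':=s_{*}N$ the fixed part. Therefore $R(-K_{X'})=R(P')$, the anticanonical model of $X'$ is $\Proj R(P')=\Proj R(-K_{X'})=Y$, and the semiample fibration of $P'$ is exactly the anticanonical morphism $f'\colon X'\to Y$; note that $f'$ contracts $\Supp N'$.

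For the middle and top rows I would pass to minimal terminal resolutions. Since $X'$ is klt it has a minimal terminal resolution $\varphi\colon X'_{mint}\to X'$, and since $Y$ is a klt Fano variety (Theorem \ref{antimodel}) it has a minimal terminal resolution $\pi\colon X'_{nrd}\to Y$; both source varieties are $\mathbb{Q}$-factorial of Fano type. By minimality only divisors of non-positive discrepancy are extracted, so $-K_{X'_{mint}}=\varphi^{*}(-K_{X'})+E_{\varphi}$ and $-K_{X'_{nrd}}=\pi^{*}(-K_Y)+E_{\pi}$ with $E_{\varphi},E_{\pi}\ge 0$ exceptional; the projection formula then gives $R(-K_{X'_{mint}})\cong R(-K_{X'})$ and $R(-K_{X'_{nrd}})\cong R(-K_Y)$, so both have anticanonical model $Y$. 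Setting $\widetilde f:=f'\circ\varphi$, the effective exceptional divisor $\varphi^{*}N'+E_{\varphi}$ is $\widetilde f$-exceptional and the movable part of $-K_{X'_{mint}}$ is $\widetilde f^{*}(-K_Y)$, so $\widetilde f\colon X'_{mint}\to Y$ is the anticanonical morphism; likewise $\pi$ is the anticanonical morphism of $X'_{nrd}$ because $-K_Y$ is ample.

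Finally I would construct $r$ by running the $K_{X'_{mint}}$-minimal model program over $Y$. Over $Y$ we have $-K_{X'_{mint}}\sim_{\mathbb{Q},Y}\varphi^{*}N'+E_{\varphi}\ge 0$, which is $\widetilde f$-exceptional, so $K_{X'_{mint}}$ is anti-effective over $Y$; the program runs and terminates on the Mori dream space $X'_{mint}$, and each step is a flip or a divisorial contraction over $Y$ preserving $\mathbb{Q}$-factoriality, terminality, being of Fano type, and the anticanonical model $Y$. One shows that each of its divisorial contractions is the contraction of a redundant divisor, so it is a redundant minimal model program, and it ends at a non-redundant model $X'_{nrd}$ with an induced morphism to $Y$; one then checks that a terminal non-redundant model over $Y$ is the same thing as a minimal terminal resolution of $Y$, which matches it with the resolution $\pi$ above. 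Commutativity of the diagram is then immediate, since $\widetilde f=f'\circ\varphi=\pi\circ r$ by construction and $f=f'\circ s\circ q^{-1}$ as rational maps because $q$ is small. The main obstacle I expect is this last step: proving that running the $K$-MMP over $Y$ from the minimal terminal resolution of $X'$ loses nothing essential, i.e.\ that its divisorial contractions are exactly redundant contractions, and that the resulting non-redundant model is a minimal terminal resolution of $Y$. That is precisely where Sakai's surface arguments, together with the machinery of redundant contractions, must be generalized and where the real care is needed; the passage from $X_q$ to $X'$, although routine for Mori dream spaces, also requires some attention so that the anticanonical map becomes an honest morphism there.
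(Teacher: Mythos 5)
Your proposal follows essentially the same route as the paper's proof: take a $\Q$-factorialization, pass via a small modification to an $X'$ where $-K_{X'}$ has a good Zariski decomposition (Lemma~\ref{zarsqmfano}, Lemma~\ref{flopsft}), take a minimal terminal resolution, then run the relative $K$-MMP over $Y$, which is the redundant MMP (Proposition~\ref{prop-redundant}, Theorem~\ref{rel MMP=red MMP}, Corollary~\ref{cor-nonred=relmin}). One small correction: rather than fixing $\pi\colon X'_{nrd}\to Y$ in advance as an independently chosen minimal terminal resolution of $Y$ and then trying to identify it with the MMP output (these need not agree, since minimal terminal resolutions are not unique), you should simply \emph{define} $X'_{nrd}$ to be the end product of the relative MMP over $Y$ and observe, as you already sketch, that the resulting $\pi$ is a minimal terminal resolution of $Y$ (terminal, $\Q$-factorial, $K$ $\pi$-nef, and $\pi$-big automatically as $\pi$ is birational).
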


For the definitions of minimal terminal resolution $X_{mint}$ and  non-redundant model $X_{nrd}$, see Proposition \ref{minterres} and Definition \ref{def-redundant ray}, respectively.
We introduce the notion of redundant MMP in Subsection \ref{redmmp}.
Roughly speaking, the redundant MMP finds a `\emph{minimal}' variety
among the varieties having the anticanonical model $Y$.
One advantage of this factorization is that all the varieties $X$, $X_q$, $X'$, $X'_{mint}$, and $X'_{nrd}$ appearing in the above diagram have the same anticanonical model $Y$.
Furthermore, we will also show that all these varieties are birationally dominated by only finitely many terminalizations of $Y$ (Corollary \ref{finite}).

Let us recall the Sakai's construction in \cite{S} for the surface case which inspired our result.
Let $X$ be a normal projective rational surface with $-K_X$ big, and let $f: X \to Y$ be the anticanonical morphism. Then for the minimal resolution $\varphi: X_{mint} \to X$, the anticanonical morphism $\widetilde{f}: X_{mint} \to Y$ factors through the minimal resolution $\pi : Y_{mint} \to Y$.
The variety $Y_{mint}$ can also be obtained by contracting all the redundant curves on $X_{mint}$ (\cite[Definition 4.1]{S}).
Such curves are redundant in the sense that their contraction to points does not affect the anticanonical model.
Since $Y_{mint}$ has no redundant curves, we call such variety \emph{non-redundant} and denote it by $X_{nrd}$.
This factorization yields the following commutative diagram
$$
\xymatrix{
X_{mint} \ar[rd]^{\widetilde{f}} \ar[d]_{\varphi}\ar[r]^{r} & X_{nrd} \ar[d]^{\pi}\\
X\ar[r]_f & Y.
}
$$
An advantage of having this decomposition is that it reduces the study of the anticanonical map to that of the morphism between
the smooth varieties $X_{mint}$ and $X_{nrd}$ whose anticanonical models are $Y$.

Clearly, this construction cannot be generalized directly in higher dimensions.
First of all, in higher dimensions, the map $f$ to the anticanonical model $Y$ is not a morphism in general.
Furthermore, there are no minimal resolutions of singularities in higher dimensions.
To overcome these difficulties, after passing to a $\Q$-factorialization $X_q$, we take a small birational map $s:X_q \dashrightarrow X'$ so that there is a morphism $X'\to Y$ (Lemma \ref{zarsqmfano})
and we use \emph{minimal terminal resolution}
$\varphi:X'_{mint}\to X'$ (Proposition \ref{minterres}).
Sakai's contraction $r$ of redundant curves is generalized to the \emph{redundant MMP} $r: X_{mint}' \dashrightarrow X_{nrd}'$ (Section \ref{redmmpsec}).

Most results extend to the case where the anticanonical model has log canonical singularities (Subsections \ref{lcsingsec} and \ref{antimapcysec}).
The main difference is that if $Y$ contains singularities worse than klt, there are infinitely many $\Q$-factorial varieties $X$ with $-K_X$ big having $Y$ as the anticanonical model (Proposition \ref{no maxt}). 
In particular, contrary to the klt case, there are no varieties birationally dominating all other varieties 
having $Y$ as the anticanonical model.

The paper is organized as follows.
We start in Section \ref{prelimsec} by collecting and reviewing some basic results.
Section \ref{antimodelsec} is devoted to prove theorem \ref{antimodel}.
We develop the notion of the redundant MMP in Section \ref{redmmpsec}.
Section \ref{antimapsec} presents the proof of Theorem \ref{antimap}.
Finally, we compare log Calabi-Yau pairs and log weak Fano pairs in Section \ref{cywfsec}.

\section{Preliminaries}\label{prelimsec}

In this section, we collect basic notions and facts which will be used throughout the paper.

\subsection{Stable base loci}

Let $D$ be a $\Q$-divisor on a normal projective variety $X$.
The stable base locus of $D$ is denoted by $\B(D)$.

\begin{definition}
The \emph{augmented base locus} of $D$ is defined as the set
$$
\B_+(D):=\bigcap_{A: \text{ ample $\Q$-divisor}} \B(D-A).
$$
The \emph{restricted base locus} of $D$ is defined as the set
$$
\B_-(D):=\bigcup_{A: \text{ ample $\Q$-divisor}} \B(D+A).
$$
\end{definition}

It is well known that $\B_+(D)$ and $\B_-(D)$ depend only on the numerical class $[D]\in\Num^1(X)_\Q:=\Num^1(X)\otimes\Q$.
We also remark that for any $\Q$-divisor $D$, $\B_+(D)$ is always Zariski closed
whereas $\B_-(D)$ is \textit{a priori} only a countable union of Zariski closed sets.

\begin{definition}
A $\Q$-divisor $D$ is called \emph{movable} if $\B_-(D)$ contains no irreducible divisors.
The \emph{movable cone} $\Mov(X) \subseteq \Num^1(X)_\R:=\Num^1(X)\otimes\R$ is the closure of the cone spanned by the classes of movable divisors.
\end{definition}

\subsection{Log pairs}

We call $(X, \Delta)$ a \emph{log pair} if $X$ is a normal projective variety and $\Delta$ is an effective $\Q$-divisor on $X$ such that $K_X + \Delta$ is $\Q$-Cartier.
For a log resolution $f : Z \rightarrow X$ of $(X, \Delta)$, we have
$$
K_Z + f_{*}^{-1}\Delta = f^{*}(K_X + \Delta) + \sum a_i E_i,
$$
where each $E_i$ is an $f$-exceptional prime divisor and $a_i = a(E_i; X, \Delta)$ is the \emph{discrepancy} of $(X,\Delta)$ at $E_i$. A log pair $(X, \Delta)$ is called \emph{terminal} (resp. \emph{canonical, Kawamata log terminal} (\emph{klt} for short)) if $\lfloor \Delta \rfloor =0$ and every $a_i >0$ (resp. $a_i \geq 0$, $a_i >-1$). A log pair $(X, \Delta)$ is called \emph{divisorial log terminal} (\emph{dlt} for short) (resp. \emph{log canonical} (\emph{lc} for short)) if every $a_i > -1$ (resp. $a_i  \geq -1$) and every coefficient of $\Delta$ is at most $1$. If $(X, 0)$ is a terminal (resp. canonical, klt, lc) pair, then we say that $X$ has \emph{terminal} (resp. \emph{canonical}, \emph{klt}, \emph{lc}) \emph{singularities}. See \cite{KM} for more details on the singularities of log pairs.

\begin{definition}\label{def-FT,sFT}
Let $X$ be a normal projective variety.
\begin{enumerate}
 \item A klt pair $(X, \Delta)$ for some $\Q$-divisor $\Delta$ is called a \emph{klt Fano pair} (resp. \emph{klt weak Fano pair}) if $-(K_X + \Delta)$ is ample (resp. nef and big).
 \item For an lc pair $(X, \Delta)$, \emph{lc Fano pair, lc weak Fano pair} are defined similarly.
 \item A variety $X$ is called a \emph{variety of Fano type} (resp. \emph{weak Fano type}) if there exists a $\Q$-divisor $\Delta$ on $X$ such that $(X, \Delta)$ is a klt Fano pair (resp. klt weak Fano pair). Moreover, if $\Delta = 0$, the variety $X$ is called a \emph{klt Fano variety} (resp. \emph{klt weak Fano variety}).
\end{enumerate}
\end{definition}

\begin{remark}
By Kodaira lemma, a Fano type variety coincides with a weak Fano type variety (\cite[Lemma-Definition 2.6]{PS}).
It is known that Fano type varieties are Mori dream spaces (\cite{BCHM}, \cite{HK}).
\end{remark}

\begin{definition}
Let $X$ be a normal projective variety.
\begin{enumerate}
 \item A log pair $(X, \Delta)$ is called a \emph{klt} (resp. an \emph{lc}) \emph{Calabi-Yau pair} if $(X, \Delta)$ is klt (resp. lc) and $-(K_X + \Delta) \sim_{\Q} 0$.
 \item If $(X, \Delta)$ is an lc Calabi-Yau pair for some $\Q$-divisor $\Delta$, then $X$ is called a \emph{variety of Calabi-Yau type}. If $(X, \Delta)$ is a klt Calabi-Yau pair for some $\Q$-divisor $\Delta$, then $X$ is called a \emph{variety of klt Calabi-Yau type}.
\end{enumerate}
\end{definition}

In general, a variety of klt Calabi-Yau type need not be of Fano type (e.g., a smooth K3 surface).
The following is easy to verify.

\begin{lemma}\label{cy}
Let $X$ be a normal projective variety. Then $X$ is a variety of Fano type if and only if $X$ is a variety of klt Calabi-Yau type and $-K_X$ is big.
\end{lemma}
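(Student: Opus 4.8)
The plan is to prove the two implications separately, in each case producing the required boundary divisor by a small explicit perturbation.

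For the ``only if'' direction, assume $(X,\Delta)$ is a klt Fano pair, so $-(K_X+\Delta)$ is ample. Bigness of $-K_X$ is then immediate, since $-K_X=-(K_X+\Delta)+\Delta$ is a sum of an ample $\Q$-divisor and an effective $\Q$-divisor. To build a klt Calabi--Yau structure I would fix a sufficiently divisible integer $m\ge 2$ so that $-m(K_X+\Delta)$ is Cartier and base point free, pick a general member $D\in|-m(K_X+\Delta)|$, and replace $\Delta$ by $\Delta+\tfrac1m D$. One has $K_X+\Delta+\tfrac1m D\sim_\Q(K_X+\Delta)-(K_X+\Delta)=0$, so the only thing to verify is that the new pair stays klt. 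This is the one step that is not purely formal; it is a routine Bertini argument: on a log resolution $g$ of $(X,\Delta)$ refined so that the pullback of a general $D$ is smooth and transverse to everything, $g^{*}D=g^{-1}_{*}D$ has no exceptional component (a general member of a base point free system meets no fixed divisor), so the exceptional discrepancies of $(X,\Delta+\tfrac1m D)$ coincide with those of $(X,\Delta)$, while $\lfloor\Delta+\tfrac1m D\rfloor=0$ since $\lfloor\Delta\rfloor=0$, $D$ is reduced, and $\tfrac1m<1$.

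For the ``if'' direction, assume $X$ is of klt Calabi--Yau type, witnessed by a klt pair $(X,\Delta_0)$ with $K_X+\Delta_0\sim_\Q 0$, and that $-K_X$ is big. Since $-K_X$ is big, Kodaira's lemma gives $-K_X\sim_\Q A+E$ with $A$ an ample $\Q$-divisor and $E\ge 0$ effective. I would then nudge $\Delta_0$ toward $E$: for a small rational $t>0$ set $\Delta_t:=(1-t)\Delta_0+tE\ge 0$. Using $K_X+\Delta_0\sim_\Q 0$ and $K_X\sim_\Q-(A+E)$ one computes $K_X+\Delta_t=(1-t)(K_X+\Delta_0)+t(K_X+E)\sim_\Q -tA$, so $-(K_X+\Delta_t)\sim_\Q tA$ is ample. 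Moreover, since $(X,\Delta_0)$ is klt and only finitely many prime divisors occur in $\Delta_0$ and $E$, the inequalities defining klt — all exceptional discrepancies on a fixed log resolution strictly above $-1$, and $\lfloor\Delta_t\rfloor=0$ — hold strictly at $t=0$ and depend affinely on $t$, hence persist for $0<t\ll 1$. Thus $(X,\Delta_t)$ is a klt Fano pair for small $t$, so $X$ is of Fano type.

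The only genuinely non-formal ingredients are Kodaira's lemma and the standard fact (see \cite{KM}) that adjoining a small multiple of a general base point free divisor to a klt pair preserves kltness; everything else is bookkeeping of $\Q$-linear equivalence, carried out among $\Q$-Cartier divisors since $K_X+\Delta$, resp.\ $K_X+\Delta_0$, is $\Q$-Cartier and $-K_X$ is $\Q$-Cartier. I expect the Bertini step in the ``only if'' direction to be the only place requiring any care.
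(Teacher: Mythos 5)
The paper gives no proof of this lemma (it is remarked to be ``easy to verify''), so there is nothing authored to compare against; your argument is the standard one and it is correct. Both perturbation steps are done carefully: in the ``only if'' direction you use Bertini on a general member $D$ of the ample linear system $|{-m(K_X+\Delta)}|$ to keep discrepancies unchanged and preserve $\lfloor\cdot\rfloor=0$, and in the ``if'' direction you use Kodaira's lemma $-K_X\sim_\Q A+E$ and interpolate $\Delta_t=(1-t)\Delta_0+tE$, checking that $-(K_X+\Delta_t)\sim_\Q tA$ is ample while kltness, being a finite collection of open affine conditions in $t$ on a fixed log resolution of $(X,\Delta_0+E)$, persists for $0<t\ll 1$. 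One tiny wording point in the ``only if'' step: you should first fix a log resolution $g$ of $(X,\Delta)$ and then choose $D$ general; the pullback system $g^{*}|{-m(K_X+\Delta)}|$ is automatically base point free on the smooth model, so Bertini gives that $g^{*}D$ is smooth, reduced, without exceptional components, and transverse to the existing snc divisor -- no refinement of $g$ after the fact is needed. This does not affect the correctness of the argument.
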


For a given klt pair $(X,\Delta)$,
it is well known that by \cite[Corollary 1.4.3]{BCHM},
there exists a small birational morphism (called a \emph{$\Q$-factorialization})
$q:X'\to X$ from a $\Q$-factorial variety $X'$.
Another consequence of \cite[Corollary 1.4.3]{BCHM} is that
there also exists a birational morphism (called a \emph{terminalization})
$t:Z\to X$ where $(Z, \Delta_Z)$ is a $\Q$-factorial terminal pair and $\Delta_Z$ is an effective divisor such that $K_Z + \Delta_Z = t^*(K_X + \Delta)$.

In the study of lc pairs, the following modification is often useful (\cite[Theorem 4.1]{F2}).
If $(X,\Delta)$ is an lc pair, then there exists a birational morphism (called a \emph{dlt blow-up})
$d:X_d\to X$ from a $\Q$-factorial variety $X_d$
such that
$a(E;X,\Delta)=-1$ for every $d$-exceptional prime divisor $E$ on $X_d$
and $(X_d,\Delta_d)$ is dlt where $\Delta_d$ is a divisor on $X_d$ defined by
$K_{X_d}+\Delta_d = d^*(K_X + \Delta)$.


For a given normal projective variety $X$, a \emph{minimal terminal resolution} $X_{mint}$
is defined by the following

\begin{proposition}[minimal terminal resolution, cf.{\cite[Theorem 1.33]{Ko-sing}}]\label{minterres}
Let $X$ be a normal projective variety.
Then there is a $\Q$-factorial normal projective variety $X_{mint}$ (not necessarily unique) having terminal singularities
with a birational morphism (called a \emph{minimal terminal resolution}) $f : X_{mint} \rightarrow X$ such that $K_{X_{mint}}$ is $f$-nef and $f$-big.
\end{proposition}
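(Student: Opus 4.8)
The plan is to obtain $X_{mint}$ as the output of a relative minimal model program over $X$ run from a resolution of singularities, which is the way \cite[Theorem 1.33]{Ko-sing} is proved.

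Concretely, I would first take a resolution of singularities $g \colon Y \to X$ with $Y$ a smooth projective variety, which exists by Hironaka's theorem. Since $Y$ is smooth, it is $\Q$-factorial and $(Y,0)$ is terminal. Choose an ample $\Q$-Cartier divisor $A$ on $Y$ such that $(Y, A)$ is klt; for instance $A = \varepsilon H$ for a general member $H$ of a very ample linear system and a small rational number $\varepsilon > 0$. Then I would run a $K_Y$-minimal model program over $X$ with scaling of $A$. The crucial observation is that, because $g$ is birational, its generic fiber is a point, so $K_Y$ is pseudoeffective over $X$; and $A$, being ample, is big over $X$. Hence by \cite{BCHM} this program terminates, and because $K_Y$ is pseudoeffective over $X$ it terminates with a birational morphism $f \colon X_{mint} \to X$ --- a morphism, not merely a birational map, since the whole program is carried out over $X$ --- for which $K_{X_{mint}}$ is $f$-nef, rather than with a Mori fiber space. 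Since $A$ is ample, every extremal ray contracted along the way has negative intersection number with $K$, so every step is in fact a step of a $K_Y$-program; consequently $\Q$-factoriality is preserved, and since discrepancies only increase along the steps of a $K$-program while $(Y,0)$ is terminal, $X_{mint}$ has terminal singularities as well. Finally $K_{X_{mint}}$ is automatically $f$-big because $f$ is birational. This yields all the properties asserted in the statement.

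The main obstacle is the termination of this relative program. Since $X$ is assumed only to be normal --- in particular, not necessarily $\Q$-Gorenstein --- one cannot replace $X$ by a terminalization or compare $K_Y$ with a pullback $g^{*}K_X$, so the program must genuinely be run over the base $X$. What makes the argument go through is precisely that $g$ is birational, which forces $K_Y$ to be pseudoeffective over $X$; combined with the relative bigness of the scaling divisor $A$, this places us within the range covered by \cite{BCHM}. A secondary point requiring care is to check that the program terminates with $K_{X_{mint}}$ itself being $f$-nef, and not merely $K_{X_{mint}} + t A_{mint}$ for some positive $t$; this is ensured by the scaling procedure, since --- as noted above --- the scaled program is a $K_Y$-program and so cannot stop while a $K$-negative extremal ray over $X$ remains.
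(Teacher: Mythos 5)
Your proof is correct and follows essentially the same approach as the reference the paper cites for this statement, \cite[Theorem 1.33]{Ko-sing}: run a relative $K$-MMP over $X$ from a resolution $Y\to X$, using that birationality of the resolution makes $K_Y$ relatively big, so that \cite{BCHM} yields termination and a relative minimal model with no $\Q$-Gorenstein hypothesis on $X$. The paper itself supplies no proof, only the citation.
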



\subsection{Anticanonical models}

Here we define the anticanonical rings and anticanonical models.

\begin{definition}
Let $X$ be a $\Q$-Gorenstein projective variety. The \emph{anticanonical ring} of $X$ is defined as
$$
R(-K_X)  := \bigoplus_{m \geq 0} H^0(\mathcal{O}_X(-mm_0K_X)),
$$
where $m_0$ is the smallest positive integer such that $m_0K_X$ is a Cartier divisor. In the case where $R(-K_X)$ is finitely generated, the \emph{anticanonical model} is defined as $Y:= \Proj R(-K_X)$.
\end{definition}

The following example is remarked by Shokurov.

\begin{example}
The anticanonical model of a $\Q$-Gorenstein toric variety is a toric Fano variety. Using the toric minimal model program, we can explicitly describe birational maps and varieties in Theorem \ref{antimap} by combinatorial ways.
\end{example}

If $X$ is a $\Q$-Gorenstein variety of Fano type, then $R(-K_X)$ is finitely generated. However, when $X$ is a $\Q$-Gorenstein variety of Calabi-Yau type with $-K_X$ big, $R(-K_X)$ is not always finitely generated by the following example.

\begin{example}
Let $S$ be a K3 surface of Picard rank two such that $\Cox(S)$ is not finitely generated. Consider the smooth projective variety $X := \P(\mathcal{O}_S(A) \oplus \mathcal{O}_S(B) \oplus \mathcal{O}_S(-A-B))$, where the cone generated by $A$ and $B$ contains $\overline{\NE}(S)$. By \cite[Proposition 2.10]{CG}, $X$ is of Calabi-Yau type. We may assume that $2A + 2B + (-A-B)=A+B$ is big. Then the tautological line bundle $\mathcal{O}_X(1)$ is big, and hence, so is $\mathcal{O}_X(-K_X)=\mathcal{O}_X(2)$. Since $\Cox(S)$ is a direct summand of $\bigoplus_{m \geq 0} H^0(\mathcal{O}_X(m))$, it follows that $R(-K_X)$ is not finitely generated.
\end{example}

\subsection{Zariski decomposition}
Let $X$ be a normal projective variety, and let $D$ be a pseudoeffective $\Q$-divisor on $X$.

\begin{definition}\label{def-Zardecomp}
The decomposition $D = P+N$ is called the \emph{Zariski decomposition} (in the sense of Cutkosky-Kawamata-Moriwaki) if
\begin{enumerate}
 \item the \emph{positive part} $P$ is a nef $\Q$-divisor,
 \item the \emph{negative part} $N$ is an effective $\Q$-divisor, and
 \item the natural map $H^0(\mathcal{O}_X(mP)) \rightarrow H^0(\mathcal{O}_X(mD))$ is isomorphic for every sufficiently divisible integer $m>0$.
\end{enumerate}
If the positive part $P$ is semiample, we say the decomposition is \emph{good}.
\end{definition}

The Zariski decomposition is unique if it exists. Except in dimension $2$, Zariski decompositions may not exist even for the pull backs of divisors in general (see \cite{Na}).
If $f^*D$ admits the Zariski decomposition for some birational morphism $f : W \to X$, we say $D$ admits a \emph{birational Zariski decomposition}.

\begin{remark}\label{remk-Zariski decomp}
(1) We can check that if $D=P+N$ is the Zariski decomposition on $X$, then $f^{*}D=f^{*}P + f^{*}N$ is the Zariski decomposition on $Z$ for any birational morphism $f : Z \rightarrow X$.\\
(2) The Zariski decomposition in Definition \ref{def-Zardecomp} coincides with the Zariski decomposition in the sense of Fujita if it exists and $D$ is big (\cite[III. Remark 1.17 (3)]{Na}).
Thus if $D=P+N$ is the Zariski decomposition and $D = P'+N'$ is a decomposition with $P'$ nef and $N'$ effective, then $P \geq P'$.
\end{remark}

The following is immediate by \cite[III. Remark 1.17 (4)]{Na} (cf. \cite[Lemma 1.6]{HK}).

\begin{lemma}\label{bigzariski}
Let $X$ be a normal projective variety and $D$ a big $\Q$-Cartier divisor on $X$. Then the section ring $R(D)$ is finitely generated if and only if $D$ admits the good birational Zariski decomposition.
\end{lemma}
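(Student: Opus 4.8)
The plan is to prove the two implications separately; in both directions the argument rests on two standard facts --- that the section ring of a semiample $\Q$-divisor is finitely generated, and that section rings are birational invariants for $\Q$-Cartier divisors.

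For the ``if'' direction, suppose there is a birational morphism $f\colon W\to X$ such that $f^{*}D=P+N$ is a good Zariski decomposition: $P$ is semiample, $N\geq 0$, and for every sufficiently divisible $m>0$ the inclusion $H^0(\mathcal{O}_W(mP))\hookrightarrow H^0(\mathcal{O}_W(mf^{*}D))$ coming from $mN\geq 0$ is an isomorphism. Since $f$ is birational and $X$ is normal, $f_{*}\mathcal{O}_W(mf^{*}D)=\mathcal{O}_X(mD)$ whenever $mD$ is Cartier, so $R(f^{*}D)$ and $R(D)$ agree up to a common truncation. Condition (3) of the Zariski decomposition, being compatible with multiplication of sections, then identifies a Veronese subring $R(P)^{(k)}$ with $R(f^{*}D)^{(k)}$ for a suitable $k$, which we enlarge if necessary so that $kP$ is also base-point-free. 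The morphism $\phi\colon W\to Z$ defined by $|kP|$ satisfies $kP=\phi^{*}A$ for an ample divisor $A$ on the image $Z$, whence $R(P)^{(k)}$ is finitely generated because $A$ is ample on a projective variety. As finite generation passes between a graded ring and any of its Veronese subrings, $R(D)$ is finitely generated.

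For the ``only if'' direction, suppose $R(D)$ is finitely generated, so that $Y:=\Proj R(D)$ exists together with an ample $\Q$-Cartier divisor $A$ on $Y$ whose section ring is a truncation of $R(D)$. Fix $m$ sufficiently divisible and let $f\colon W\to X$ be a birational morphism resolving the rational map $X\dashrightarrow Y$ and the base locus of $|mD|$ simultaneously; let $g\colon W\to Y$ be the induced morphism, so that $g$ is the one attached to the moving part of $|mf^{*}D|$. Put $P:=g^{*}A$ (semiample) and $N:=f^{*}D-P$. Since $g^{*}(mA)$ is precisely the moving part of $|mf^{*}D|$ and the corresponding fixed part $mN$ is effective, we get $N\geq 0$ and $H^0(\mathcal{O}_W(mP))=H^0(\mathcal{O}_W(mf^{*}D))$. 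Finite generation of $R(D)$ forces the splitting ``moving part $+$ fixed part'' of $f^{*}D$ on $W$ to be captured already at level $m$ (after at most one further blow-up of $W$), so the analogous equality holds for all sufficiently divisible multiples with moving part $g^{*}(\ell A)$; hence $f^{*}D=P+N$ is a Zariski decomposition, and it is good since $P$ is semiample. Because $D$ is big, Remark~\ref{remk-Zariski decomp}(2) guarantees this coincides with the Fujita/CKM decomposition, as required.

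The main obstacle is the verification, in the ``only if'' direction, that this splitting on the chosen model $W$ is stable under passing to \emph{all} sufficiently divisible multiples of $D$ --- equivalently, that the base loci $\B(\ell f^{*}D)$ stabilize --- since it is exactly here that finite generation of $R(D)$ enters in an essential way; everything else is formal manipulation of truncations and the projection formula. This stabilization is classical and is the content of \cite[III.~1.17~(4)]{Na} (see also \cite[Lemma~1.6]{HK}), from which the lemma follows at once.
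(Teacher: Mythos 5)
Your argument is correct and is essentially the paper's: the paper offers no written proof, stating only that the lemma is immediate from \cite[III. Remark 1.17 (4)]{Na} (cf. \cite[Lemma 1.6]{HK}), and your write-up is an unwinding of exactly that citation, with the one genuinely non-formal step (stabilization of the moving/fixed-part splitting on a single model for all sufficiently divisible multiples) still deferred to the same reference. The formal parts you supply --- multiplicativity of the maps $H^0(mP)\to H^0(mf^{*}D)$, invariance of section rings under birational pullback, and finite generation of $R(P)$ for $P$ semiample via the ample model --- are all standard and correctly handled.
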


The following is also well known.

\begin{lemma}\label{zarviaanti}
Let $X$ be a normal projective variety and $D$ a big $\Q$-Cartier divisor on $X$ such that the section ring $R(D)$ is finitely generated.
Then the following are equivalent:
\begin{enumerate}
 \item $D$ admits the good Zariski decomposition $D= P+N$.
 \item There is a birational morphism $f : X \rightarrow Y:=\Proj R(D)$.
\end{enumerate}
If one of these equivalent conditions holds, then $P\sim_\Q f^*(H)$ for some ample divisor $H$ on $Y$
and $N$ is an $f$-exceptional effective divisor.
\end{lemma}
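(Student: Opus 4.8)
The plan is to prove the two implications directly, using the definition of the good Zariski decomposition together with the description of $Y=\Proj R(D)$ via mobile and fixed parts, and then to read off the last assertion (cf. \cite[Lemma 1.6]{HK}). Throughout, $d=\dim X$, and I call $m>0$ \emph{sufficiently divisible} if $mD$ is Cartier and the Veronese subring $R(D)^{(m)}$ is generated in degree one; for such $m$, the sheaf $\mathcal{O}_Y(m)$ is an ample, globally generated invertible sheaf which we write as $\mathcal{O}_Y(mH)$ for an ample $\Q$-divisor $H$ on $Y$, and $H^{0}(Y,\mathcal{O}_Y(mH))=H^{0}(X,\mathcal{O}_X(mD))$.

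For $(1)\Rightarrow(2)$: given a good Zariski decomposition $D=P+N$, semiampleness of $P$ yields, after choosing $\ell>0$ with $\ell P$ base-point free, a morphism $g:X\to Z$ with $\ell P=g^{*}A'$ for an ample divisor $A'$ on $Z=\Proj R(P)$. By condition (3) of Definition \ref{def-Zardecomp}, the inclusions $H^{0}(\mathcal{O}_X(mP))\hookrightarrow H^{0}(\mathcal{O}_X(mD))$ are isomorphisms for all sufficiently divisible $m$ and respect multiplication, so $R(P)$ and $R(D)$ share a Veronese subring and $Z=\Proj R(P)=\Proj R(D)=Y$. Since $\operatorname{vol}(P)=\lim_{m}\frac{d!}{m^{d}}h^{0}(\mathcal{O}_X(mP))=\lim_{m}\frac{d!}{m^{d}}h^{0}(\mathcal{O}_X(mD))=\operatorname{vol}(D)>0$, the divisor $P$ is big, so $g$ is birational; we take $f:=g$, and then $P\sim_{\Q}f^{*}H$ with $H:=\frac{1}{\ell}A'$ ample on $Y$.

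For $(2)\Rightarrow(1)$: let $f:X\to Y$ be a birational morphism and fix $m$ sufficiently divisible. The sections in $H^{0}(X,\mathcal{O}_X(mD))=H^{0}(Y,\mathcal{O}_Y(mH))$, viewed as sections of the globally generated invertible sheaf $f^{*}\mathcal{O}_Y(mH)$, exhibit an injection $f^{*}\mathcal{O}_Y(mH)\hookrightarrow\mathcal{O}_X(mD)$ whose cokernel is supported on the fixed divisor of $|mD|$; equivalently $f^{*}\mathcal{O}_Y(mH)=\mathcal{O}_X(M_m)$, where $mD=M_m+F_m$ is the decomposition into mobile and fixed parts and $M_m$ is base-point free. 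Putting $P:=\frac{1}{m}M_m$ and $N:=\frac{1}{m}F_m$, we have $D=P+N$ with $P$ semiample and $N$ effective, and for every sufficiently divisible $m$, $H^{0}(\mathcal{O}_X(mP))=H^{0}(\mathcal{O}_X(M_m))=H^{0}(f^{*}\mathcal{O}_Y(mH))=H^{0}(\mathcal{O}_Y(mH))=H^{0}(\mathcal{O}_X(mD))$; hence $D=P+N$ is a good Zariski decomposition.

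Finally, a birational morphism to $Y$ is unique, so the $f$ in the statement is the one constructed above and $P\sim_{\Q}f^{*}H$ with $H$ ample. Since the section ring $R(D)$ of a $\Q$-divisor on the normal variety $X$ is integrally closed, $Y=\Proj R(D)$ is normal and $f_{*}\mathcal{O}_X=\mathcal{O}_Y$. From $\mathcal{O}_X(mD)\cong f^{*}\mathcal{O}_Y(mH)\otimes\mathcal{O}_X(mN)$ and the projection formula we get $f_{*}\mathcal{O}_X(mD)\cong\mathcal{O}_Y(mH)\otimes f_{*}\mathcal{O}_X(mN)$, while $f_{*}\mathcal{O}_X(mP)\cong\mathcal{O}_Y(mH)$ because $\mathcal{O}_X(mP)=f^{*}\mathcal{O}_Y(mH)$. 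If $N$ were not $f$-exceptional, then $\mathcal{O}_Y\subsetneq f_{*}\mathcal{O}_X(mN)$ for $m\gg0$, and tensoring by the invertible sheaf $\mathcal{O}_Y(mH)$ and using Serre vanishing $H^{1}(Y,\mathcal{O}_Y(mH))=0$ would force $h^{0}(\mathcal{O}_X(mD))>h^{0}(\mathcal{O}_X(mP))$, contradicting condition (3); hence $N$ is $f$-exceptional. The step I expect to be the main obstacle is the identification of $f^{*}\mathcal{O}_Y(mH)$ with the mobile part $\mathcal{O}_X(M_m)$ of $\mathcal{O}_X(mD)$ in $(2)\Rightarrow(1)$, i.e. the observation that ``$f$ is a morphism'' means exactly that the mobile part of $|mD|$ is base-point free; the $f$-exceptionality of $N$ needs some care with pushforwards but is otherwise routine and could instead be quoted from \cite[III.1.17]{Na}.
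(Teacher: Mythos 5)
Your proof is correct and follows the paper's overall strategy, but you unpack in detail a step that the paper handles by citation, and you supply an argument the paper leaves implicit. For $(1)\Rightarrow(2)$ both proofs are essentially the same: semiampleness of $P$ gives a morphism to $\Proj R(P)$, which agrees with $\Proj R(D)$ by axiom (3) of the Zariski decomposition. For $(2)\Rightarrow(1)$, the paper simply cites \cite[Lemma 1.6]{HK} to produce an ample $H$ on $Y$ with $N:=D-f^*H$ effective; you instead reconstruct that fact directly by identifying $f^*\mathcal{O}_Y(mH)$ with the mobile part $\mathcal{O}_X(M_m)$ of $|mD|$ and setting $P=\tfrac{1}{m}M_m$, $N=\tfrac{1}{m}F_m$, which is the content underneath the citation and makes the implication more self-contained. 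You also give an explicit cohomological argument for the $f$-exceptionality of $N$, which the paper does not spell out (it is implicit in \cite{HK} or \cite{Na}); your argument works, but note that you should first fix $m_0$ with $\mathcal{O}_Y\subsetneq f_*\mathcal{O}_X(m_0N)$ and then use the inclusion $f_*\mathcal{O}_X(m_0N)\subseteq f_*\mathcal{O}_X(mN)$ together with Serre vanishing for $m\gg 0$ divisible by $m_0$, since the cokernel you twist varies with $m$. One small inaccuracy: ``a birational morphism to $Y$ is unique'' is not literally true (precompose with an automorphism of $Y$); what is meant, and what both your proof and the paper's tacitly assume, is that $f$ in condition (2) is the morphism compatible with the $\Proj$ structure, i.e.\ the one extending the natural rational map $X\dashrightarrow Y$, and with that convention the rest of your argument is fine.
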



\begin{proof}
Assume first that $D=P+N$ is the good Zariski decomposition. Since $P$ is semiample, we obtain the morphism $f:=\varphi_{|mP|} : X \rightarrow Y=\Proj R(D)$ for sufficiently divisible $m>0$.
Conversely, assume that there is a birational morphism $f : X \rightarrow Y:=\Proj R(D)$. By \cite[Lemma 1.6]{HK}, there is an ample divisor $H$ on $Y$ such that $N=D - f^*H$ is effective. It is easy to check that $D=P+N$ with $P=f^*H$ is the good Zariski decomposition.
\end{proof}


\section{Singularities of anticanonical models}\label{antimodelsec}

Let $X$ be a smooth projective variety. If $K_X$ is big and the canonical ring $R(K_X)$ is finitely generated, then the canonical model $\Proj R(K_X)$ has canonical singularities.
However, if $-K_X$ is big and the anticanonical ring $R(-K_X)$ is finitely generated, the anticanonical model $\Proj R(-K_X)$ can be very singular in general.
Our goal in this section is to answer the following simple question.

\begin{questionsec}
When does the anticanonical model $\Proj R(-K_X)$ have klt or lc singularities?
\end{questionsec}

Although this question is also answered in \cite{CG}, we present a slightly different proof below.

\subsection{Klt singularities (Proof of Theorem \ref{antimodel})}
We prove Theorem \ref{antimodel}. Recall that $X$ is a $\Q$-Gorenstein variety such that $-K_X$ is big and $R(-K_X)$ is finitely generated.
By Lemma \ref{bigzariski}, there is a birational morphism $f : W \rightarrow X$ from a smooth variety $W$ such that $f^{*}(-K_X)=P+N$ is the good Zariski decomposition.
Thus we have
$$
H^0(\mathcal{O}_X(-mK_X))= H^0(\mathcal{O}_W (f^{*}\mathcal{O}_X(-mK_X)))=H^0(\mathcal{O}_W(mP))
$$
for any sufficiently divisible integer $m > 0$. In particular, we obtain the birational morphism $g:=\varphi_{|m_0P|} : W \rightarrow Y := \Proj R(-K_X)$ to the anticanonical model of $X$ for some integer $m_0 >0$
\[
\xymatrix{
& \ar[ld]_f W \ar[rd]^g & \\
X && Y .\\
}
\]
Possibly by taking further blow-ups, we may assume that $g$ is a log resolution of $(Y,0)$. Note that $P=g^{*}(-K_Y)$. We write $-K_W = f^{*}(-K_X)+F$ for some (not necessarily effective) $f$-exceptional divisor $F$. Then we have
$$
-K_W = P+N+F = g^{*}(-K_Y) + (N+F).
$$

Now suppose that $X$ is a $\Q$-Gorenstein variety of Fano type. To show that $Y$ has only klt singularities, we prove that the coefficients of $N+F$ are smaller than $1$.
By Lemma \ref{cy}, there is an effective $\Q$-divisor $\Delta$ such that $(X, \Delta)$ is a klt Calabi-Yau pair. Possibly by taking further blow-ups, we may assume that $f$ is a log resolution of $(X, \Delta)$. Since $P$ is semiample and big, there is an effective $\Q$-divisor $P' \sim_\Q P$ such that
$$
-K_W = f^{*}(-K_X-\Delta) + P'+N+F = g^{*}(-K_Y) + (N+F).
$$
Since $(X, \Delta)$ is a klt pair, the coefficients of $P'+N+F$ are less than $1$, and so are the coefficients of $N+F$. Thus $Y$ has klt singularities.

Conversely, suppose that the anticanonical model $Y$ is a klt Fano variety.
By Bertini Theorem, for sufficiently large and divisible $m_1>0$,
we can choose a general member $G$ in $|m_1 P|$ which is a smooth prime divisor.
Let $P_1 := \frac{1}{m_1} G$. Then $P_1+ N + F$ has a snc support possibly by taking further blow-ups of $W$.
Consider $-K_W=g^{*}(-K_{Y}) + N + F.$
Since $Y$ has klt singularities, every coefficient of $N+F$ is less than $1$.
Thus every coefficient of $P_1+N+F$ is less than $1$. Now, put $\Delta := f_{*}(P_1 + N+F)$.
Since $P_1+N+F\sim_\Q P+N+F=f^*(-K_X)+F$ and $F$ is $f$-exceptional, we have
$\Delta \sim_{\Q} -K_X$ and $\lfloor \Delta \rfloor =0$. We claim that $(X, \Delta)$ is a klt pair.
Recall that $-K_W = f^{*}(-K_X - \Delta) + P_1 + N + F$.
Since $P_1+N+F$ has a snc support, $f$ is a log resolution of $(X, \Delta)$. Furthermore, every coefficient of $P_1+N+F$ is less than $1$. Thus the claim follows, i.e., $(X, \Delta)$ is a klt Calabi-Yau pair. By Lemma \ref{cy}, we complete the proof of Theorem \ref{antimodel}. \hfill $\square$

\subsection{Lc singularities}\label{lcsingsec}
By the same argument as in Proof of Theorem \ref{antimodel}, we can also characterize a variety of lc Calabi-Yau type via the singularities on the anticanonical model (cf. \cite[Theorem 1.1]{HP2}, \cite[Corollary 3.6]{CG}). We leave the proof to the interested readers.

\begin{theorem}\label{lcsing}
Let $X$ be a $\Q$-Gorenstein variety such that $-K_X$ is big and $R(-K_X)$ is finitely generated. Then $X$ is of Calabi-Yau type if and only if the anticanonical model $Y=\Proj R(-K_X)$ is an lc Fano variety.
\end{theorem}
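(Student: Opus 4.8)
The plan is to mimic the proof of Theorem \ref{antimodel} almost verbatim, replacing ``klt'' by ``lc'' throughout and ``variety of Fano type'' by ``variety of Calabi-Yau type''. As before, using that $-K_X$ is big with $R(-K_X)$ finitely generated, Lemma \ref{bigzariski} gives a birational morphism $f : W \to X$ from a smooth $W$ with $f^*(-K_X) = P + N$ the good Zariski decomposition, hence a birational morphism $g : W \to Y = \Proj R(-K_X)$ with $P = g^*(-K_Y)$. After further blow-ups we may assume $g$ is a log resolution of $(Y,0)$ and write $-K_W = g^*(-K_Y) + (N+F)$, where $-K_W = f^*(-K_X) + F$ with $F$ the ($f$-exceptional, not necessarily effective) discrepancy divisor.

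For the ``only if'' direction, suppose $X$ is of Calabi-Yau type, so there is an effective $\Q$-divisor $\Delta$ with $(X,\Delta)$ an lc Calabi-Yau pair; after further blow-ups $f$ is a log resolution of $(X,\Delta)$. Since $P$ is semiample and big, pick $P' \sim_\Q P$ effective with $P' + N + F$ snc; then
$$
-K_W = f^*(-K_X - \Delta) + P' + N + F = g^*(-K_Y) + (N+F),
$$
and lc-ness of $(X,\Delta)$ forces the coefficients of $P' + N + F$, hence of $N+F$, to be at most $1$. Therefore every discrepancy $a(E; Y, 0) \geq -1$, i.e. $Y$ has lc singularities; since $-K_Y$ is ample, $Y$ is an lc Fano variety.

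For the ``if'' direction, suppose $Y$ is an lc Fano variety. By Bertini, choose $G \in |m_1 P|$ general for $m_1 \gg 0$ and set $P_1 = \tfrac{1}{m_1}G$; after further blow-ups $P_1 + N + F$ has snc support. From $-K_W = g^*(-K_Y) + N + F$ and lc-ness of $Y$, every coefficient of $N+F$ is at most $1$, hence so is every coefficient of $P_1 + N + F$. Put $\Delta := f_*(P_1 + N + F)$; since $P_1 + N + F \sim_\Q P + N + F = f^*(-K_X) + F$ with $F$ $f$-exceptional, we get $\Delta \sim_\Q -K_X$ and, because $K_W + (P_1+N+F) = f^*(K_X + \Delta)$ with $P_1+N+F$ snc of coefficients $\leq 1$, the pair $(X,\Delta)$ is lc. Thus $(X,\Delta)$ is an lc Calabi-Yau pair and $X$ is of Calabi-Yau type.

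The only genuine subtlety — and the step I would state carefully — is the passage between ``coefficients of $N+F$ bounded by $1$'' and ``$Y$ is lc'': one must note that the prime divisors appearing in $N+F$ are precisely the candidate divisors over $Y$ (the $g$-exceptional ones together with strict transforms), so bounding their coefficients by $1$ is exactly the lc condition for $(Y,0)$; conversely lc-ness of $Y$ bounds exactly these coefficients. In the klt case the strict inequality $<1$ transfers cleanly; here one must also keep track of the coefficient-at-most-$1$ condition in the definition of lc and check that $\Delta$ carries no component of coefficient exceeding $1$, which follows since its components come from $P_1+N+F$. No new ideas beyond the proof of Theorem \ref{antimodel} are needed, which is why the authors leave it to the reader.
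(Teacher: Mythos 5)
Your proof is correct and is exactly what the paper intends: the authors state that Theorem~\ref{lcsing} follows ``by the same argument as in Proof of Theorem~\ref{antimodel}'' and leave the details to the reader, and your write-up is precisely that argument with ``klt'' replaced by ``lc'' and the strict inequality $<1$ on the coefficients of $N+F$ (resp. $P_1+N+F$) relaxed to $\leq 1$. One minor clarification on your closing remark: since the pair in question is $(Y,0)$ there is no boundary on $Y$, so the only ``candidate divisors'' to check are the $g$-exceptional ones, which is exactly where $N+F$ is supported (because $K_W-g^*K_Y=-(N+F)$ is automatically $g$-exceptional); the reference to strict transforms is not needed, but this does not affect the argument.
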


\begin{remark}
In Theorem \ref{antimodel} and Theorem \ref{lcsing}, the bigness of the anticanonical divisor is essential. To see this, let $X$ be an extremal rational elliptic surface $X_{22}$ in \cite[Theorem 4.1]{MP}. There is a singular fiber consisting of nine $(-2)$-curves $E_1, \ldots, E_9$ forming the dual graph $\widetilde{E}_8$. We can write
$$
-K_X = 2E_1 + 4E_2 + 6E_3 + 3E_4 + 5E_5 + 4E_6 + 3 E_7 + 2E_8 + E_9.
$$
Let $\pi : \widetilde{X} \rightarrow X$ be the blow-up at the intersection point $p$ of $E_8$ and $E_9$ with the exceptional divisor $E$.
It is easy to see that the Zariski decomposition
$-K_{\widetilde{X}} =P+N$ is given by $P=\frac{2}{3}\pi^{*}(-K_X)$ and
$$
N = \frac{1}{3}(2E_1 + 4E_2 + 6E_3 + 3E_4 + 5E_5 + 4E_6 + 3 E_7 + 2E_8 + E_9) +2E,
$$
where we use the same notations for the strict transforms. Since some coefficient of $N$ is larger than 1, it follows that $\widetilde{X}$ cannot be of Calabi-Yau type.
However, the anticanonical model of $\widetilde{X}$ is a smooth rational curve $\P^1$.
\end{remark}

As an application of Theorems \ref{antimodel} and \ref{lcsing}, we obtain the following, which can also be shown by a direct argument.

\begin{corollary}\label{cor-CY is closed}
Let $X$ be a $\Q$-Gorenstein variety such that $-K_X$ is big and $R(-K_X)$ is finitely generated.
For every small birational map $X \dashrightarrow X'$ to a $\Q$-Gorenstein variety $X'$, if $X$ is of Fano type (resp. of Calabi-Yau type), then so is $X'$.
\end{corollary}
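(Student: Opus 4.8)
The plan is to reduce everything to the characterizations in Theorems~\ref{antimodel} and~\ref{lcsing}, exploiting the fact that $R(-K_X)$ and $R(-K_{X'})$ are identified under a small birational map. First I would observe that if $\phi : X \dashrightarrow X'$ is a small birational map between $\Q$-Gorenstein varieties, then $\phi$ induces an isomorphism $H^0(\mathcal{O}_X(-mK_X)) \cong H^0(\mathcal{O}_{X'}(-mK_{X'}))$ for every sufficiently divisible $m > 0$: indeed, pulling back sections along $\phi$ and its inverse is well-defined away from the small (codimension $\geq 2$) indeterminacy and exceptional loci, and Hartogs extension takes care of the rest, since $-mK_X$ and $-mK_{X'}$ correspond as Weil divisor classes. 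Hence $R(-K_{X'})$ is finitely generated as soon as $R(-K_X)$ is, and moreover the two anticanonical models agree: $Y' := \Proj R(-K_{X'}) = \Proj R(-K_X) =: Y$. Since $-K_X$ big is a numerical condition on the divisor class, which is preserved under the small birational identification of $\Num^1$, we also get that $-K_{X'}$ is big. So both $X$ and $X'$ fall under the hypotheses of Theorem~\ref{antimodel} (resp. Theorem~\ref{lcsing}) with the \emph{same} anticanonical model $Y$.

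With that in hand the corollary is immediate. If $X$ is of Fano type, then by Theorem~\ref{antimodel} the variety $Y$ is a klt Fano variety; but then, again by Theorem~\ref{antimodel} applied now to $X'$ (which has $-K_{X'}$ big, $R(-K_{X'})$ finitely generated, and anticanonical model $Y$), we conclude that $X'$ is of Fano type. The Calabi--Yau type case is identical, using Theorem~\ref{lcsing}: $X$ of Calabi--Yau type forces $Y$ to be an lc Fano variety, which in turn forces $X'$ to be of Calabi--Yau type. No further input is needed.

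The only point that requires genuine care — and the place I would expect a referee to push back — is the claim that a small birational map respects the anticanonical section rings and, in particular, the bigness of $-K$. The subtlety is that $\phi$ need not be a morphism in either direction, so one cannot simply push forward or pull back divisors along a regular map; one must work on the open loci where $\phi$ is an isomorphism and invoke that the complements have codimension $\geq 2$ both upstairs and downstairs (this is exactly what ``small'' buys us), together with normality of $X$ and $X'$ to extend the resulting sections and to identify Weil divisor class groups. A clean way to package this is to pass to a common resolution $W \to X$, $W \to X'$ and compare $f^*(-K_X)$ with $(f')^*(-K_{X'})$: they differ by a divisor supported on the exceptional loci, which contributes nothing to global sections after pushforward because the relevant exceptional divisors over $X$ (resp. $X'$) are not contracted on the other side only in codimension $\geq 2$. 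Once this identification is in place, finiteness of the section ring, the equality of anticanonical models, and the bigness of $-K_{X'}$ all follow, and the proof concludes by two invocations of the already-established characterization theorems.
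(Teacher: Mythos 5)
Your proposal is correct and takes the same route the paper implicitly indicates: the corollary is stated without proof as ``an application of Theorems~\ref{antimodel} and~\ref{lcsing},'' and your argument simply spells out that application, using the fact that a small birational map identifies the anticanonical section rings (hence preserves finite generation, bigness, and the anticanonical model) and then invoking the characterization theorem twice, once in each direction. The one spot worth tightening is the phrase ``preserved under the small birational identification of $\Num^1$'': a small birational map between non-$\Q$-factorial $\Q$-Gorenstein varieties need not identify $\Num^1$, so it is cleaner to deduce bigness of $-K_{X'}$ directly from the isomorphism $H^0(\mathcal{O}_X(-mK_X)) \cong H^0(\mathcal{O}_{X'}(-mK_{X'}))$ and the growth criterion for volume, which is the substance of what you are using anyway.
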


\section{Redundant minimal model program}\label{redmmpsec}
In this section, we introduce the notion of redundant MMP, which generalizes the redundant contractions for surfaces (\cite{HP1}, \cite{HP2}, \cite{S}).

\subsection{Redundant MMP}\label{redmmp}
We begin with our motivation of the redundant MMP by reviewing Sakai's redundant contractions/blow-ups in dimension 2 (see \cite{HP1}, \cite{HP2}, \cite{S} for details). Let $S$ be a smooth projective rational surface such that $-K_S$ is big. Then we have the good Zariski decomposition $-K_S=P+N$.
A $(-1)$-curve $E$ on $S$ is called a \emph{redundant curve} if $E \cdot P=0$ (\cite[Definition 4.1]{S}).
Sakai in \cite{S} showed that such curves are redundant in the sense that if $\varphi : S\to S'$ is a birational morphism contracting some redundant curves, then the anticanonical models of $S$ and $S'$ remain unchanged. Note that the contraction morphism $\varphi : S\to S'$ can be seen as a partial MMP. This is the starting point of the redundant MMP that we develop below.

From now on, let $Z$ be a $\Q$-factorial klt variety such that $-K_Z$ is big. Any $K_Z$-negative extremal ray $R$ of $\overline{\NE}(Z)$ is contractible and there exists a contraction $\varphi_R:Z\to Z'$ associated to $R$ by the cone Theorem.

\begin{definition}\label{def-redundant ray}
The notations are as above.
\begin{enumerate}
\item $R$ is called a \emph{redundant extremal ray} if the exceptional locus $\Exc(\varphi_R)$ is contained in $\B_+(-K_Z)$. The contraction $\varphi_R$ associated to a redundant extremal ray $R$ is called the \emph{redundant contraction} of $R$.
\item If there are no redundant extremal rays in $\overline{\NE}(Z)$, then $Z$ is called a \emph{non-redundant model}.
\end{enumerate}
\end{definition}

We will see that if $-K_Z$ admits the good Zariski decomposition $-K_Z=P+N$, then an extremal ray $R$ is redundant if and only if $R \cdot P=0$ (see Proposition \ref{prop-redundant}).
Thus redundant extremal rays can be considered as a generalization of Sakai's redundant curves.

\begin{lemma}\label{lem-phi_R=bir}
Every redundant contraction $\varphi_R$ is birational.
That is, $\varphi_R$ is either of divisorial type or small type.
\end{lemma}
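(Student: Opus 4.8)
The plan is to show that a redundant contraction cannot be of fiber type, i.e. $\dim Z' = \dim Z$. Suppose for contradiction that $\varphi_R : Z \to Z'$ is a Mori fiber space, so $\dim Z' < \dim Z$. Then $\Exc(\varphi_R) = Z$, the whole variety. By the definition of a redundant extremal ray, this forces $\B_+(-K_Z) = Z$.

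The key step is to derive a contradiction from $\B_+(-K_Z) = Z$. Since $-K_Z$ is big by hypothesis, its augmented base locus is a proper closed subset of $Z$; equivalently, $\B_+(-K_Z) \neq Z$. This is a standard fact: for any big $\Q$-divisor $D$ on a normal projective variety, $Z \setminus \B_+(D)$ is a nonempty open set (on which $D$ behaves like an ample divisor). I would cite this directly, or alternatively argue via the good birational Zariski decomposition $-K_Z = P + N$ that exists here because $Z$ is a $\Q$-factorial klt variety with $-K_Z$ big, hence a Mori dream space, so $R(-K_Z)$ is finitely generated (Lemma \ref{bigzariski}); then the birational map to the anticanonical model $Y$ together with the semiampleness of $P$ pulled back shows $\B_+(-K_Z)$ is contained in the exceptional locus of that map, which is a proper closed subset. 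Either way, $\B_+(-K_Z) \subsetneq Z$, contradicting $\B_+(-K_Z) = Z$.

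Therefore $\varphi_R$ is birational. The final sentence of the statement is then just the standard dichotomy from the contraction theorem for $K_Z$-negative extremal rays on a $\Q$-factorial variety: a birational extremal contraction is either divisorial (the exceptional locus is a prime divisor) or small (the exceptional locus has codimension at least two).

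I do not expect any serious obstacle here; the only point requiring a little care is making sure the bigness of $-K_Z$ is genuinely being used — it is precisely what rules out the fiber-type case — and stating cleanly which formulation of "$\B_+$ of a big divisor is proper" I invoke. The argument is short: identify that fiber type would force $\Exc(\varphi_R) = Z \subseteq \B_+(-K_Z)$, note this is impossible for big $-K_Z$, and conclude.
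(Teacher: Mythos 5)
Your proposal is correct and follows essentially the same argument as the paper: assume $\varphi_R$ is of fiber type, so $\Exc(\varphi_R) = Z$, which by the definition of a redundant ray forces $\B_+(-K_Z) = Z$, contradicting the bigness of $-K_Z$. The paper's proof is exactly this short contradiction; your additional commentary about the alternative route via the Zariski decomposition and the dichotomy into divisorial and small types is fine but not needed.
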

\begin{proof}
Suppose that $\varphi_R$ is not birational. Then we have $\Exc(\varphi_R)=Z$ so that $\B_+(-K_Z)=Z$. However, since $-K_Z$ is big, we get a contradiction.
\end{proof}

If $\varphi_R$ is of divisorial type, then we call it a \emph{redundant divisorial contraction} of $Z$ (associated to $R$).
Suppose that $\varphi_R$ is of small type.
Since it is a usual $K_Z$-negative extremal small contraction,
the (klt) flip $\chi_R:Z\dashrightarrow Z^+/Z'$ exists by \cite{BCHM}:
$$
\xymatrix{
Z\ar[rd]_{\varphi_R}\ar@{-->}[rr]^{\chi_R}&&Z^+\ar[ld]^{\varphi_R^+}\\
&Z'.&
}
$$
We call the flip $\chi_R$ a \emph{redundant flip} of $Z$ (associated to $R$). We call the sequence of the redundant divisorial contractions and redundant flips of $Z$ the \emph{redundant MMP} on $Z$. If the redundant MMP on $Z$ terminates, then the resulting model is a non-redundant model of $Z$ and we denote it by $Z_{nrd}$.

Note that there do not exist infinitely many redundant divisorial contractions in the redundant MMP
by the same reason as in the usual MMP (i.e., each divisorial contraction drops the Picard number $\rho(X)$ by one).
Note that the redundant flips exist by \cite{BCHM}.
However, we have the following

\begin{conjecture}\label{conj-redflip term}
There do not exist infinite sequences of redundant flips.
\end{conjecture}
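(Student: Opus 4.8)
The plan is to reduce termination of redundant flips to a known termination statement by adjusting the divisor whose MMP we are running. A redundant flip $\chi_R : Z \dashrightarrow Z^+$ is by definition a $K_Z$-negative flip, so the naive hope would be to invoke termination of the $K_Z$-MMP; but since $-K_Z$ is big and not canonical in general, $K_Z$-flips need not terminate, and the redundancy condition $\Exc(\varphi_R)\subseteq\B_+(-K_Z)$ is exactly what should be exploited. First I would record, using Proposition \ref{prop-redundant} (which identifies redundant rays with the rays $R$ satisfying $R\cdot P=0$ when $-K_Z=P+N$ is the good Zariski decomposition), that each redundant flip preserves the good Zariski decomposition: the positive part $P=f^*H$ for an ample $H$ on $Y=\Proj R(-K_Z)$ is unchanged (all these models share the anticanonical model $Y$ by the discussion preceding the conjecture), and only the negative part $N$ is modified. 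Thus along a sequence of redundant flips the map to $Y$ is fixed and we are running a relative MMP over $Y$.

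The key step is then to choose an auxiliary boundary. Since $X$ — and hence $Z$ — is of Fano type, $Z$ is a Mori dream space, and more importantly there is a klt boundary $\Delta$ with $(Z,\Delta)$ a klt Calabi--Yau pair (Lemma \ref{cy}), so $K_Z+\Delta\sim_\Q 0$. A redundant flip, being a $K_Z$-flip with $\Exc(\varphi_R)\subseteq \B_+(-K_Z)$, should also be a $(K_Z+\Delta')$-flip for a suitable klt boundary $\Delta'$ supported on (a slight perturbation of) $N$ plus an ample piece: concretely, because $R\cdot P=0$ the ray pairs negatively against $-K_Z$ only through $N$, so one can take $\Delta'$ with $K_Z+\Delta'\sim_\Q \epsilon N$ or similar, where $(Z,\Delta')$ remains klt for small $\epsilon$. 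One then wants to apply termination with scaling, or termination of flips for klt pairs in the relevant situation, over the base $Y$. The cleanest route is: run the redundant MMP as a $(K_Z+\Delta_\epsilon)$-MMP over $Y$ where $\Delta_\epsilon = \Delta + \epsilon A$ for a general ample $A$, so that $(Z,\Delta_\epsilon)$ is klt with $K_Z+\Delta_\epsilon$ big over $Y$; then termination follows from \cite{BCHM} (termination with scaling / finiteness of models for klt pairs with big boundary).

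The main obstacle I expect is verifying that a redundant flip genuinely \emph{is} a step of such a $(K_Z+\Delta_\epsilon)$-MMP over $Y$ — that is, that the same extremal ray $R$ which is $K_Z$-negative and redundant is also $(K_Z+\Delta_\epsilon)$-negative \emph{and} $Y$-trivial (so that it makes sense over $Y$). The $Y$-triviality is essentially the content of $R\cdot P=0$ since $P$ is the pullback of an ample class from $Y$. The sign control is more delicate: one must show $(K_Z+\Delta_\epsilon)\cdot R<0$, i.e. that perturbing $K_Z$ by the Calabi--Yau boundary $\Delta$ (plus a small ample) does not destroy negativity on $R$; here the fact that $\Delta\sim_\Q -K_Z$ is numerically proportional to $P+N$ and $P\cdot R=0$ reduces the question to the behaviour of $N$ and $A$ on $R$, which can be arranged by choosing $A$ and $\epsilon$ appropriately and by a boundedness-of-denominators argument on the finitely many possible extremal rays contracted at each stage. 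Once this matching is in place, termination is immediate from the cited results; absent it, the conjecture indeed remains open, which is presumably why the authors state it as a conjecture rather than a lemma.
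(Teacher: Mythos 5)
The statement you are trying to prove is stated in the paper as a \emph{conjecture}, not a theorem, and the authors give no proof of it. Immediately after stating it they observe that it would follow from termination of ordinary klt flips (an open problem in dimension $\geq 4$), and then prove only a weaker substitute (Theorem~\ref{rel MMP=red MMP} and Corollary~\ref{cor-nonred=relmin}): \emph{under the extra hypothesis that $-K_Z$ admits the good Zariski decomposition}, there exists \emph{some} finite sequence of redundant contractions and flips ending in a non-redundant model. That is not the same as saying \emph{every} sequence of redundant flips terminates, which is what the conjecture asserts.

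Your attempt has two concrete problems beyond this. First, the conjecture is stated for an arbitrary $\Q$-factorial klt variety $Z$ with $-K_Z$ big. It does not assume that $Z$ is of Fano type, nor that $-K_Z$ admits a (good) Zariski decomposition, nor even that $R(-K_Z)$ is finitely generated. Your argument invokes a klt Calabi--Yau boundary $\Delta$ via Lemma~\ref{cy} (which requires Fano type) and an anticanonical morphism $f:Z\to Y$ (which requires the good Zariski decomposition, Lemma~\ref{zarviaanti}); neither is available under the conjecture's actual hypotheses, so you are quietly proving a special case. Second, even in that special case there is a sign obstruction that cannot be fixed by ``choosing $A$ and $\epsilon$ appropriately'': if $(Z,\Delta)$ is klt Calabi--Yau then $K_Z+\Delta\sim_\Q 0$, so $K_Z+\Delta+\epsilon A\sim_\Q \epsilon A$ is ample, hence strictly positive on \emph{every} curve class. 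There are therefore no $(K_Z+\Delta_\epsilon)$-negative extremal rays at all, and a redundant flip (which is $K_Z$-negative and $P$-trivial) is never a step of a $(K_Z+\Delta_\epsilon)$-MMP. Scaling $\Delta$ down instead of adding to it destroys the bigness of the boundary, so termination with scaling from \cite{BCHM} no longer applies. You flag this issue yourself, but it is fatal rather than merely ``delicate,'' and your closing remark is the honest assessment: absent such a matching, the conjecture remains open -- which is precisely why the authors stated it as a conjecture and proved only the restricted Corollary~\ref{cor-nonred=relmin}.
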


\noindent Clearly, the termination of the usual klt flips implies Conjecture \ref{conj-redflip term}.
Nonetheless, we will see that there exists a finite sequence of redundant divisorial contractions and redundant flips to a non-redundant model $Z_{nrd}$ when $-K_Z$ admits the good Zariski decomposition
(Corollary \ref{cor-nonred=relmin}).

\subsection{Redundant MMP with Zariski decomposition}\label{redmmpwithzd}

In this subsection, we study how the redundant MMP affects the geometry of the variety when the anticanonical divisor admits the good Zariski decomposition. First, we obtain the good Zariski decomposition via the anticanonical model.


\begin{proposition}\label{prop-redundant}
Let $Z$ be a $\Q$-factorial terminal variety such that $-K_Z$ is big and $-K_Z$ admits the good Zariski decomposition $-K_Z=P+N$, and let $R$ be a $K_Z$-negative extremal ray of $\overline{\NE}(Z)$ inducing the contraction
$\varphi:Z\to Z'$. Then the following are equivalent:
\begin{enumerate}
\item $R$ is a redundant extremal ray, i.e., $\Exc(\varphi) \subseteq \B_+(-K_Z)$.

\item $R \cdot P=0$.

\item  ($\varphi$ is divisorial) $-K_{Z'}$ admits the good Zariski decomposition $-K_{Z'}=P'+N'$ such that $P=\varphi^*P'$.

\item[] ($\varphi$ is small) If $\chi:Z\dashrightarrow Z^+/Z'$ is the flip, then $-K_{Z^+}$ admits the good Zariski decomposition $-K_{Z^+}=P'+N'$ such that the pull backs of $P$ and $P'$ coincide on a common resolution of $Z$ and $Z^+$.
\end{enumerate}
\end{proposition}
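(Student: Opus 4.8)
The plan is to prove the chain of implications $(1)\Rightarrow(2)\Rightarrow(3)\Rightarrow(1)$, treating the divisorial and small cases of $(3)$ in parallel. The core geometric input is that $P$ is semiample, so it defines the anticanonical morphism $g=\varphi_{|mP|}\colon Z\to Y=\Proj R(-K_Z)$ with $P\sim_\Q g^*H$ for an ample $H$ on $Y$ (this is Lemma~\ref{zarviaanti}); moreover $\B_+(-K_Z)=\B_+(P)=\Exc(g)\cup(\text{negative part supports})$, and since $P=g^*H$ with $H$ ample, $\B_+(P)$ is exactly the locus where $g$ is not an isomorphism together with $\Supp N$. The cleanest formulation: a curve $C$ has $P\cdot C=0$ if and only if $g$ contracts $C$, if and only if $C\subseteq\Exc(g)\subseteq\B_+(-K_Z)$.

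\emph{$(1)\Rightarrow(2)$:} Suppose $R\cdot P\neq0$. Since $P$ is nef, $P\cdot C>0$ for every curve $C$ with $[C]\in R$; such a curve is then not contracted by $g$, so $C\not\subseteq\Exc(g)$. Because $\Exc(g)$ is a union of subvarieties of dimension $<\dim Z$ along which $P$ is ``trivial'' in an appropriate sense, and $\B_+(-K_Z)=\B_+(P)$ is covered by curves on which $P$ is zero, a curve moving in $R$ with $P\cdot C>0$ cannot lie in $\B_+(-K_Z)$; hence $\Exc(\varphi)\not\subseteq\B_+(-K_Z)$, i.e. $R$ is not redundant. (More directly: $\B_+(P)$ is the null locus of $P$ by Nakamaye-type results, so $C\subseteq\B_+(-K_Z)$ forces $P\cdot C=0$.)

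\emph{$(2)\Rightarrow(3)$:} Assume $R\cdot P=0$. Then $P$ is $\varphi$-nef and $\varphi$-trivial, so $P\sim_\Q\varphi^*P'$ (divisorial case) or the strict transform of $P$ descends to $Z^+$ compatibly (small case), by the cone/contraction theorem applied to the contraction associated with the extremal face; write $P'$ for the resulting divisor downstairs. One checks $P'$ is nef: it is nef on $Z'$ because its pullback $P$ is nef and $\varphi$ is surjective with connected fibers. Since $P\sim_\Q g^*H$ and $g$ factors through $\varphi$ (as $g$ contracts every curve in $R$), $P'$ is in fact semiample, pulled back from the same $Y$; so $Y$ is also the anticanonical model of $Z'$ (resp. $Z^+$). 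Now set $N':=-K_{Z'}-P'$ (resp. on $Z^+$). We must verify $-K_{Z'}=P'+N'$ is the good Zariski decomposition: $P'$ is semiample hence nef; $N'=\varphi_*N$ (resp. strict transform of $N$) is effective since $\varphi$ does not contract a divisor to smaller dimension other than possibly $\Exc(\varphi)$, and a short discrepancy computation using $K_Z=\varphi^*K_{Z'}+aE$ with $a>0$ (divisorial, $K_Z$-negative) shows $N=\varphi^*N'+$ (effective on $E$), keeping $N'$ effective; and the section-space condition (iii) of Definition~\ref{def-Zardecomp} follows because $R(-K_{Z'})=R(P')=R(P)=R(-K_Z)$, the middle equalities by projection formula and $\varphi_*\mathcal O=\mathcal O$. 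Uniqueness of the Zariski decomposition then pins down $(P',N')$.

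\emph{$(3)\Rightarrow(1)$:} If $-K_{Z'}=P'+N'$ is the good Zariski decomposition with $P=\varphi^*P'$ (or the small analog), then every curve $C\in R$ is $\varphi$-contracted, so $P\cdot C=\varphi^*P'\cdot C=0$; thus $\Exc(\varphi)$ is covered by $P$-trivial curves, hence lies in the null locus of the semiample big divisor $P$, which equals $\B_+(P)=\B_+(-K_Z)$. So $R$ is redundant.

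\emph{Main obstacle.} The delicate point is the bookkeeping in $(2)\Rightarrow(3)$: producing $P'$ on $Z'$ (resp.\ on $Z^+$ via a common resolution) with $P=\varphi^*P'$, verifying that $N'$ stays \emph{effective} after the divisorial contraction or the flip, and—most of all—checking condition (iii) of the Zariski decomposition downstairs, i.e. that no sections are lost. The identification $R(-K_{Z'})\cong R(-K_Z)$ (equivalently, that $Z$ and $Z'$ share the anticanonical model $Y$) is exactly what makes the redundant MMP ``redundant,'' and I expect this is where the real work lies; the small/flip case additionally requires care because one must pass to a common resolution and argue the positive parts agree there, which is where Remark~\ref{remk-Zariski decomp}(1) on pullbacks of Zariski decompositions gets used.
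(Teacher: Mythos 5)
Your chain of implications $(1)\Rightarrow(2)\Rightarrow(3)\Rightarrow(1)$ and the basic ingredients --- Lemma~\ref{zarviaanti}, the identity $\B_+(-K_Z)=\B_+(P)$, the discrepancy/negativity computation for the divisorial case, and the invariance of $H^0$ under the small modification --- mirror the paper's proof. In $(2)\Rightarrow(3)$ the paper obtains effectivity of $N'$ from the negativity lemma, and in the small case simply notes $H^0(\mathcal O_Z(-mK_Z))=H^0(\mathcal O_{Z^+}(-mK_{Z^+}))$ and reapplies Lemma~\ref{zarviaanti}, while you descend $P$ through the contraction theorem and push forward $N$; both routes land in the same place. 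Your $(3)\Rightarrow(1)$ is also fine.

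The step that does not hold up as written is $(1)\Rightarrow(2)$. The inference ``$P\cdot C>0$, hence $C$ is not contracted by $g$, hence $C\not\subseteq\Exc(g)$'' is wrong: a curve can lie inside $\Exc(g)$ and still map finitely onto its image --- think of a section of an exceptional $\P^1$-bundle over a curve that $g$ contracts to that curve. The parenthetical Nakamaye fallback has the same flaw: $\Null(P)$ is the union of positive-dimensional subvarieties $V$ with $(P|_V)^{\dim V}=0$, which is \emph{not} the set of points swept out by $P$-trivial curves; a curve $C$ sitting inside a higher-dimensional component of $\Null(P)$ on which $P$ is nef but non-big can perfectly well have $P\cdot C>0$. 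So the claim ``$C\subseteq\B_+(-K_Z)$ forces $P\cdot C=0$'' does not follow. What is actually required is an argument that the $\varphi$-contracted curves covering $\Exc(\varphi)$ are $g$-contracted, and this is exactly the content your paragraph elides; it is also, to be fair, where the paper's own one-sentence justification (that $\Exc(\varphi)$ is covered by curves $C$ spanning $R$ with $C\cdot P=0$) leaves the most to the reader. As written, your proof of this direction has a genuine gap.
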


\begin{proof}
(1)$\Rightarrow$(2): Since it is well known that $\B_+(-K_Z)=\B_+(P)$, we have $\Exc(\varphi)\subseteq\B_+(P)$. Note also that $\Exc(\varphi)$ is covered by curves $C$ such that
$C\cdot P=0$.
Since such curves also span the ray $R$, we have $R\cdot P=0$.

(2)$\Rightarrow$(1): $R\cdot P=0$ implies that the curves $C$ contracted by $\varphi$ are contained in $\B_+(P)=\B_+(-K_Z)$.
Since such curves are movable in $\Exc(\varphi)$, we obtain $\Exc(\varphi)\subseteq\B_+(K_Z)$, hence $R$ is a redundant extremal ray.

(2)$\Rightarrow$(3): Consider the anticanonical morphism $f:=\varphi_{|mP|} : Z \rightarrow Y$ for a sufficiently divisible integer $m>0$. By Lemma \ref{zarviaanti}, we have $P=f^*(-K_Y)$.

Let $\varphi$ be a divisorial contraction.
Since $f$ contracts all the curves that are contracted by $\varphi:Z\to Z'$, it follows that $f$ factors through $Z'$. We have the following commutative diagram
\[
\xymatrix{
Z \ar[rr]^{\varphi} \ar[rd]_{f} && Z' \ar[ld]^{f'}\\
&Y.&
}
\]
We can write $-K_{Z'} = f'^{*}(-K_Y) + N'$ for some $f'$-exceptional divisor $N'$.
Since $\varphi$ is a $K_Z$-negative extremal contraction, we have
$$
-K_Z = \varphi^{*}(-K_{Z'}) - aE = f^{*}(-K_Y) + (\varphi^{*}N' - aE)
$$
for some rational number $a>0$.
Recall that $-K_Z = f^{*}(-K_Y) + N$ is the good Zariski decomposition, and hence, $N = \varphi^{*}N'-aE$.
By the negativity lemma, $N'$ is an effective $\Q$-divisor.
Since $N'$ is effective and $f'$-exceptional, we have
$$
H^0(\mathcal{O}_{Z'}(-K_{Z'})) = H^0(\mathcal{O}_{Z'}(-mf'^*(-K_Y)))=H^0(\mathcal{O}_Y(-mK_Y))
$$
for a sufficiently divisible integer $m>0$.
Thus, $-K_{Z'} = f'^{*}(-K_Y) + N'$ is the good Zariski decomposition.
Since $P=f^*(-K_Y)=\varphi^*f'^*(-K_Y)=\varphi^*P'$, the assertion for divisorial case follows.

Now let $\varphi$ be a small contraction, and $\chi:Z\dashrightarrow Z^+/Z'$ be its redundant flip:
$$
\xymatrix{
Z \ar@{-->}[rr]^{\chi}  \ar[rd]^{\varphi} \ar[rdd]_{f}&& Z^+ \ar[ldd]^{f'}  \ar[ld]_{\varphi'} \\
&Z' \ar[d] &\\
&Y.&
}
$$
Since $\chi$ is small, we have $H^0(\mathcal{O}_Z(-mK_Z))=H^0(\mathcal{O}_{Z^+}(-mK_{Z^+}))$ for a sufficiently large integer $m>0$.
By Lemma \ref{zarviaanti},  we have the good Zariski decomposition $-K_{Z^+}=f'^*(-K_Y)+N'$ where $N'$ is an effective $f'$-exceptional divisor.
Let $g:W\to Z$ and $g':W\to Z^+$ be common resolutions of $Z$ and $Z^+$.
Then since $g^*f^*=g'^*f'^*$, we have $g^*P=g'^*P'$. Thus we have shown the implication (2)$\Rightarrow$(3) for the small case.

(3)$\Rightarrow$(2): Let $C$ be a curve contracted by $\varphi$. Since $f(C)$ is a point, it follows that 
$P\cdot C=f^{*}(-K_Y)\cdot C=0$.
\end{proof}

\begin{remark}
In the proof of Proposition \ref{prop-redundant}, the bigness of $-K_Z$ is essential. In fact, the equivalence (2)$\Leftrightarrow$(3) in Proposition \ref{prop-redundant} is no longer true when $-K_Z$ is not big. To see this, let $X$ be an extremal rational elliptic surface $X_{22}$ in \cite[Theorem 4.1]{MP}. Pick an intersection point $x$ of a section of the elliptic fibration on $X$ and a singular fiber with the multiplicity one. Let $\varphi : \widetilde{X} \rightarrow X$ be the blow-up at $x$ with the exceptional divisor $E$. Then the Zariski decomposition is $-K_{\widetilde{X}}=P + N$ with $P=0$. Thus $E$ spans an extremal ray $R$ such that $R \cdot P=0$ as in (2) of Proposition \ref{prop-redundant}, but $\varphi$ does not satisfy (3) of Proposition \ref{prop-redundant} since $\kappa(-K_X)=1$.
\end{remark}

\begin{corollary}\label{cor-same anticn ring}
Let $Z$ be a $\Q$-factorial terminal variety such that $-K_Z$ is big and $-K_Z$ admits the good Zariski decomposition $-K_Z=P+N$.
Let $\varphi_R:Z\to Z'$ be the redundant contraction associated to some redundant extremal ray $R$ of $Z$.
\begin{enumerate}
 \item (divisorial case) If $\varphi_R$ is a redundant divisorial contraction, then \\$\Proj R(-K_Z)\simeq  \Proj R(-K_{Z'})$.
 \item (small case) If $\varphi_R$ is small and $\chi:Z\dashrightarrow Z^+/Z'$ is its redundant flip, then
$\Proj R(-K_Z)\simeq \Proj R(-K_{Z^+})$.
\end{enumerate}
In particular, redundant divisorial contractions and redundant flips preserve the anticanonical model.
\end{corollary}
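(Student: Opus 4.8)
The plan is to read the statement off Proposition~\ref{prop-redundant} and its proof, since that proof already carries out the essential cohomological comparisons.

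\emph{Divisorial case.} Let $f=\varphi_{|mP|}\colon Z\to Y:=\Proj R(-K_Z)$ be the anticanonical morphism, so that $P\sim_\Q f^*(-K_Y)$ by Lemma~\ref{zarviaanti}. Since $R$ is a redundant extremal ray, every curve $C$ contracted by $\varphi_R$ satisfies $C\cdot P=0$ by Proposition~\ref{prop-redundant}, hence is contracted by $f$; therefore $f$ factors as $f=f'\circ\varphi_R$ for some birational morphism $f'\colon Z'\to Y$. As in the proof of Proposition~\ref{prop-redundant}, one has the good Zariski decomposition $-K_{Z'}=f'^{*}(-K_Y)+N'$ with $N'$ effective and $f'$-exceptional. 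Because $f'$ is birational, $Y$ is normal, and $N'\ge 0$ is $f'$-exceptional, the projection formula yields $H^0(\mathcal{O}_{Z'}(-mK_{Z'}))=H^0(\mathcal{O}_Y(-mK_Y))$ for every sufficiently divisible $m>0$, and the same identity holds with $Z'$ replaced by $Z$ (using $-K_Z=f^*(-K_Y)+N$). Comparing graded pieces shows that $R(-K_Z)$ and $R(-K_{Z'})$ have a common Veronese subring, and since $\Proj$ is unchanged by passing to a Veronese subring we obtain $\Proj R(-K_Z)\simeq\Proj R(-K_{Z'})$.

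\emph{Small case.} The redundant flip $\chi\colon Z\dashrightarrow Z^+$ is a small birational map of $\Q$-factorial varieties, hence an isomorphism in codimension one. As $\mathcal{O}_Z(-mK_Z)$ and $\mathcal{O}_{Z^+}(-mK_{Z^+})$ are reflexive and agree on the common big open subset, we get $H^0(\mathcal{O}_Z(-mK_Z))=H^0(\mathcal{O}_{Z^+}(-mK_{Z^+}))$ for all sufficiently divisible $m>0$, and therefore $\Proj R(-K_Z)\simeq\Proj R(-K_{Z^+})$ by the same Veronese argument. (Alternatively, one invokes the good Zariski decomposition of $-K_{Z^+}$ furnished by Proposition~\ref{prop-redundant} and runs the divisorial argument through the common anticanonical model $Y$.) The final ``in particular'' assertion is then immediate.

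The computations are routine; the only point requiring genuine care is the bookkeeping of Cartier indices, so that all the section rings in sight are honest graded rings and passing to Veronese subrings is legitimate. The single load-bearing input is the identification $H^0(\mathcal{O}_{Z'}(-mK_{Z'}))=H^0(\mathcal{O}_Y(-mK_Y))$, uniform enough in $m$ to produce an isomorphism of (Veronese subrings of) the anticanonical rings — and this is exactly what the proof of Proposition~\ref{prop-redundant} already establishes, so I expect no further obstacle.
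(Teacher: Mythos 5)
Your proof is correct and matches the paper's approach: the paper's own proof is the one-liner ``it immediately follows from (3) of Proposition~\ref{prop-redundant},'' and your argument simply unwinds the proof of that proposition, where the identifications $H^0(\mathcal{O}_{Z'}(-mK_{Z'}))=H^0(\mathcal{O}_Y(-mK_Y))$ (resp.\ for $Z^+$) were already established en route to showing that the positive part of the Zariski decomposition is pulled back from $Y$. Both your divisorial argument and the reflexive-sheaf observation in the small case are standard and sound; the only thing you spell out that the paper leaves implicit is the routine Veronese/Cartier-index bookkeeping.
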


\begin{proof}
It immediately follows from (3) of Proposition \ref{prop-redundant}.
\end{proof}


It turns out that we can obtain a non-redundant model by running the relative MMP
over the anticanonical model, which can be considered as the redundant MMP.

\begin{theorem}\label{rel MMP=red MMP}
Let $Z$ be a $\Q$-factorial terminal variety such that $-K_Z$ is big and $-K_Z$ admits the good Zariski decomposition
$-K_Z=P+N$.
Let $f:Z\to Y=\Proj R(-K_Z)$ be the anticanonical morphism to the anticanonical model $Y$.
Then any relative minimal model $Z_{min}$ of $Z$ over $Y$ is a non-redundant model $Z_{nrd}$.
\end{theorem}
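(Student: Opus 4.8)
The approach is to recognize the relative $K_Z$-MMP over $Y$ as a redundant MMP, and then to observe that the absence of $K_{Z_{min}}$-negative extremal rays over $Y$ translates exactly into the absence of redundant extremal rays on $Z_{min}$. Fix the anticanonical morphism $f:Z\to Y$; by Lemma \ref{zarviaanti} we have $P\sim_{\Q} f^{*}H$ for an ample $\Q$-divisor $H$ on $Y$, and $N$ is effective and $f$-exceptional. The first point is purely numerical: a curve $C$ on $Z$ is contracted by $f$ if and only if $P\cdot C=H\cdot f_{*}C=0$, using that $H$ is ample. Hence $\overline{\NE}(Z/Y)=P^{\perp}\cap\overline{\NE}(Z)$ is a face of $\overline{\NE}(Z)$, so every extremal ray of $\overline{\NE}(Z/Y)$ is an extremal ray of $\overline{\NE}(Z)$ lying in $P^{\perp}$. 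By the equivalence (1)$\Leftrightarrow$(2) of Proposition \ref{prop-redundant}, such a ray, when $K_Z$-negative, is precisely a redundant extremal ray.

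Next I would run the $K_Z$-MMP over $Y$; any relative minimal model $Z_{min}$ arises through a finite sequence $Z=Z_0\dashrightarrow Z_1\dashrightarrow\cdots\dashrightarrow Z_{\ell}=Z_{min}$ of $K$-negative extremal contractions and flips over $Y$. I claim by induction on $i$ that $Z_i$ is $\Q$-factorial terminal, that $-K_{Z_i}$ is big with a good Zariski decomposition $-K_{Z_i}=P_i+N_i$, that the structure morphism $f_i:Z_i\to Y$ satisfies $\Proj R(-K_{Z_i})\simeq Y$ with $P_i\sim_{\Q}f_i^{*}H_i$ for some ample $H_i$ on $Y$, and that the step $Z_i\dashrightarrow Z_{i+1}$ is a redundant divisorial contraction or a redundant flip. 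For the inductive step, the contracted ray $R$ is $K_{Z_i}$-negative over $Y$, hence a redundant extremal ray of $Z_i$ by the first paragraph applied to $Z_i$; Proposition \ref{prop-redundant}(3) then furnishes the good Zariski decomposition of $-K_{Z_{i+1}}$ (resp.\ of the flipped model) whose positive part pulls back to $P_i$ on a common resolution, so it is again big, and Corollary \ref{cor-same anticn ring} gives $\Proj R(-K_{Z_{i+1}})\simeq\Proj R(-K_{Z_i})\simeq Y$. As $\Q$-factoriality and terminality survive a step of the $K$-MMP and $Z_{i+1}\to Y$ is a birational morphism onto $\Proj R(-K_{Z_{i+1}})$, Lemma \ref{zarviaanti} identifies its positive part with $f_{i+1}^{*}H_{i+1}$ for an ample $H_{i+1}$ on $Y$, closing the induction. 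In particular $Z\dashrightarrow Z_{min}$ is a redundant MMP, so $Z_{min}$ is a non-redundant model obtained from $Z$.

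Finally I would check that $Z_{min}$ carries no redundant extremal ray. If $R$ were such a ray then $K_{Z_{min}}\cdot R<0$ by definition, while Proposition \ref{prop-redundant} gives $R\cdot P_{min}=0$, that is $R\cdot f_{min}^{*}H_{min}=0$; since $H_{min}$ is ample this forces $f_{min,*}R=0$, so $R\subseteq\overline{\NE}(Z_{min}/Y)$, contradicting the fact that $K_{Z_{min}}$ is nef over $Y$. Hence $Z_{min}=Z_{nrd}$. The delicate point is the bookkeeping in the inductive step: one must check that the three standing hypotheses of Proposition \ref{prop-redundant} --- $\Q$-factorial terminal, $-K$ big, and good Zariski decomposition --- together with the identification of the structure morphism with the anticanonical morphism are genuinely inherited by $Z_{i+1}$; once this is in place, the redundancy of each step and the non-existence of redundant rays on $Z_{min}$ follow formally from Proposition \ref{prop-redundant}, Corollary \ref{cor-same anticn ring}, and Lemma \ref{zarviaanti}. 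Termination of the relative MMP plays no role here, since the statement concerns an arbitrary relative minimal model.
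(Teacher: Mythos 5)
Your core argument is the same as the paper's. The paper proves this in three sentences: since $K_{Z_{min}}$ is $\phi$-nef, a $K_{Z_{min}}$-negative extremal ray $R$ is not contracted by $\phi$, so $\Exc(\varphi_R)\not\subseteq\B_+(-K_{Z_{min}})=\Exc(\phi)$ and $R$ is not redundant; this is exactly your third paragraph, phrased via Definition \ref{def-redundant ray}(1) rather than via the equivalence with $R\cdot P_{min}=0$ in Proposition \ref{prop-redundant}. What you add in the first two paragraphs is a genuine (and legitimate) piece of bookkeeping that the paper leaves implicit: to invoke $\B_+(-K_{Z_{min}})=\Exc(\phi)$ (or, in your version, Proposition \ref{prop-redundant}) on $Z_{min}$, one must know that $Z_{min}$ is $\Q$-factorial terminal, that $-K_{Z_{min}}$ is still big with a good Zariski decomposition, and that $\phi$ is the anticanonical morphism onto $Y$. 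You verify this by tracking the relative MMP step by step, using Proposition \ref{prop-redundant}(3) and Corollary \ref{cor-same anticn ring}, which effectively reproves the redundancy of the steps later recorded in Corollary \ref{cor-nonred=relmin}. A slightly shorter route to the same verification, closer to what the paper seems to have in mind, is to note that $Z\dashrightarrow Z_{min}$ is a birational contraction over $Y$, so every $\phi$-exceptional divisor is already $f$-exceptional and hence has nonpositive discrepancy over $Y$; this gives $-K_{Z_{min}}=\phi^{*}(-K_Y)+N'$ with $N'$ effective directly, without the induction. One small caveat in your paragraph two: you assert that \emph{any} relative minimal model of $Z$ over $Y$ is reached by a finite $K_Z$-MMP over $Y$; this is standard (MMP with scaling), but since the paper's direct argument avoids needing it, it is an extra input you are quietly relying on.
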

\begin{proof}
Let $Z_{min}$ be a relative minimal model of $Z$ over $Y$ with a morphism $\phi :Z_{min}\to Y$.
Let $R$ be a $K_{Z_{min}}$-negative extremal ray, and let $\varphi_R:Z_{min}\to Z'$ be the associated contraction.
Since $K_{Z_{min}}$ is $\phi$-nef, the curves $C$ spanning $R$ are not contracted by $\phi$.
Thus $\Exc(\varphi_R)\not\subseteq\B_+(-K_{Z_{min}})=\Exc(\phi)$ so that $R$ is not a redundant extremal ray.
\end{proof}

\begin{corollary}\label{cor-nonred=relmin}
Let $Z$ be a $\Q$-factorial terminal variety such that $-K_Z$ is big and $-K_Z$ admits the good Zariski decomposition
$-K_Z=P+N$.
Then there exists a finite sequence of redundant divisorial contractions and redundant flips:
$$
Z=Z_0\dashrightarrow Z_1\dashrightarrow Z_2\dashrightarrow\cdots\dashrightarrow Z_N=Z_{nrd}
$$
where $Z_{nrd}$ is a non-redundant model of $Z$.
\end{corollary}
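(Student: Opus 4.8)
The plan is to deduce Corollary \ref{cor-nonred=relmin} from Theorem \ref{rel MMP=red MMP} together with Corollary \ref{cor-same anticn ring}, by running the $K_Z$-MMP relative to the anticanonical morphism $f : Z \to Y = \Proj R(-K_Z)$ and checking that every step produced is a redundant contraction or a redundant flip.

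First I would invoke \cite{BCHM}: since $Z$ is $\Q$-factorial and terminal (in particular klt) and $-K_Z$ is big, $Z$ is of Fano type, hence $K_Z$ is $\Q$-effective over $Y$ is not the point --- rather, the relevant input is that the relative MMP for $K_Z$ over $Y$ terminates and yields a relative minimal model $Z_{min} = Z_{nrd}$ with a morphism $\phi : Z_{nrd} \to Y$ such that $K_{Z_{nrd}}$ is $\phi$-nef; this is exactly the setup of Theorem \ref{rel MMP=red MMP}, which identifies $Z_{nrd}$ as a non-redundant model. So a finite sequence
\[
Z = Z_0 \dashrightarrow Z_1 \dashrightarrow \cdots \dashrightarrow Z_N = Z_{nrd}
\]
of $K_{Z_i}$-negative extremal divisorial contractions and flips over $Y$ exists, each $Z_i$ being $\Q$-factorial and terminal (terminality is preserved under steps of the MMP), and each $Z_i$ admitting a morphism $f_i : Z_i \to Y$ with $f_0 = f$.

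The key remaining point is that each step in this sequence is \emph{redundant} in the sense of Definition \ref{def-redundant ray}, i.e. that the extremal ray $R_i \subseteq \overline{\NE}(Z_i)$ contracted at the $i$-th step satisfies $\Exc(\varphi_{R_i}) \subseteq \B_+(-K_{Z_i})$. I would argue this via the criterion (2) of Proposition \ref{prop-redundant}: it suffices to show $R_i \cdot P_i = 0$, where $-K_{Z_i} = P_i + N_i$ is the good Zariski decomposition on $Z_i$. By Corollary \ref{cor-same anticn ring} (applied inductively, or rather by the same argument as in its proof), $\Proj R(-K_{Z_i}) \simeq Y$ for every $i$, and by Lemma \ref{zarviaanti} one has $P_i \sim_\Q f_i^* H$ for an ample divisor $H$ on $Y$. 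Since the step $Z_i \dashrightarrow Z_{i+1}$ is performed over $Y$, every curve $C$ spanning $R_i$ is contracted by $f_i$, hence $P_i \cdot C = f_i^* H \cdot C = 0$, so $R_i \cdot P_i = 0$ and $R_i$ is redundant; consequently the divisorial contraction (resp. the flip) is a redundant divisorial contraction (resp. a redundant flip), and each $Z_{i+1}$ again has $-K_{Z_{i+1}}$ big admitting a good Zariski decomposition with the same anticanonical model $Y$, so the induction continues.

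The main obstacle I expect is purely bookkeeping rather than conceptual: one must verify that at each stage the hypotheses of Proposition \ref{prop-redundant} and Corollary \ref{cor-same anticn ring} genuinely hold, namely that $-K_{Z_i}$ remains big (clear, as bigness is a birational invariant preserved under these maps) and admits the \emph{good} Zariski decomposition (which follows from Lemma \ref{bigzariski} together with the fact that $R(-K_{Z_i}) \cong R(-K_Z)$ is finitely generated, using that small maps and the divisorial contractions here preserve the anticanonical ring), and that $Z_i$ stays $\Q$-factorial and terminal along the MMP. Given these, finiteness of the sequence is immediate since it is a finite sub-process of a terminating relative MMP over $Y$. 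Thus the proof reduces to assembling Theorem \ref{rel MMP=red MMP}, Proposition \ref{prop-redundant}(2), and Lemma \ref{zarviaanti} in the order indicated.
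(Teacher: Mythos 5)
Your proposal matches the paper's proof: both run the relative $K_Z$-MMP over $Y = \Proj R(-K_Z)$, invoke Theorem~\ref{rel MMP=red MMP} to identify the resulting relative minimal model as a non-redundant model, and then check that each intermediate contraction/flip over $Y$ is redundant because its exceptional locus (equivalently, any curve it contracts) is contracted by the induced morphism $f_i : Z_i \to Y$, hence lies in $\B_+(-K_{Z_i})$. The only cosmetic difference is that you route the redundancy check through criterion~(2) of Proposition~\ref{prop-redundant} ($R_i \cdot P_i = 0$) and make the inductive bookkeeping explicit, whereas the paper appeals directly to the definition via $\Exc(\varphi_R)\subseteq\Exc(f_i)=\B_+(-K_{Z_i})$; these are equivalent by that same proposition.
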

\begin{proof}
Let $f:Z\to Y=\Proj R(-K_Z)$ be the anticanonical morphism.
By \cite{BCHM}, there exists a finite sequence of divisorial contractions and flips over $Y$ terminating with a relative minimal model $Z_{min}$ over $Y$.
By Theorem \ref{rel MMP=red MMP}, $Z_{min}$ is also a non-redundant morel $Z_{nrd}$ of $Z$.

Now it remains to show that each divisorial contraction and flip over $Y$ is redundant.
Let $R\subseteq\overline{\NE}(Z_i /Y)$ be a $K_{Z_i}$-negative extremal ray with the associated birational contraction $\varphi_R:Z_i\to Z_{i+1}$ over $Y$ for $0 \leq i \leq N-1$.
It is enough to show that $R$ is a redundant extremal ray of $Z_i$.
Since $R$ is contracted over $Y$, it follows that $\Exc(\varphi_R) \subseteq \Exc(f)=\B_+(P)=\B_+(-K_X)$. Thus the extremal ray $R$ is redundant.
\end{proof}

\begin{remark}[Minimality of non-redundant model $Z_{nrd}$]\label{remk-minimality}
Let $Z,Z_i,Y,Z_{nrd}$ be as in Corollary \ref{cor-nonred=relmin}. For every intermediate variety $Z_i$ in the redundant MMP, $-K_{Z_i}$ is big and $-K_{Z_i}$ admits the good Zariski decomposition.
A non-redundant model $Z_{nrd}$ satisfies the following minimal property: if we run $K_{Z_{nrd}}$-MMP further, then it breaks some expected properties. More precisely, if $\varphi:Z_{nrd}\to Z'$ is a $K_{Z_{nrd}}$-negative extremal contraction, then we have the following:
\begin{enumerate}
 \item If $\varphi$ is divisorial or of fiber type, then it changes the anticanonical model.
 \item If $\varphi$ is small and $\chi : Z \dashrightarrow Z^+/Z'$ is the flip, then $-K_{Z^+}$ does not admit the Zariski decomposition.
\end{enumerate}
\end{remark}

\section{Structure of anticanonical maps}\label{antimapsec}

In this section, by studying the structure of anticanonical maps using the redundant MMP developed in Section \ref{redmmpsec}, we prove Theorem \ref{antimap}.

\subsection{Anticanonical maps of Fano type varieties (Proof of Theorem \ref{antimap})}
In this subsection, we prove Theorem \ref{antimap}.
We also show that there are only finitely many $\Q$-factorial Fano type varieties having the same anticanonical model (Corollary \ref{finite}).

Let $X$ be a $\Q$-Gorenstein Fano type variety, and let $Y:=\Proj R(-K_X)$ be its anticanonical model.
Then for some common resolution $f:W\to X$ and $g:W\to Y$, we have the good Zariski decomposition $f^*(-K_X)=g^*(-K_Y)+N$ and $Y=\Proj R(g^*(-K_Y))$. However, $Y$ is not the anticanonical model of $W$ in general. Theorem \ref{antimap} aims to decompose the birational map  $X\dashrightarrow Y=\Proj R(-K_X)$ into the maps strictly
between the varieties having $Y$ as the anticanonical model.

Consider a minimal terminal resolution $\varphi : X_{mint} \to X$ and a terminalization $t : Z \to Y$ of $(Y,0)$.
We start with some easy lemmas.

\begin{lemma}\label{antipres}
The varieties $X_{mint}$ and $Z$ have the same anticanonical model $Y$.
\end{lemma}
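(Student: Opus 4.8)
The claim is that both the minimal terminal resolution $X_{mint}$ of $X$ and a terminalization $Z$ of $(Y,0)$ have anticanonical model equal to $Y$. The strategy is to show, for each of the two varieties, that its anticanonical ring agrees with $R(-K_X) = R(-K_Y)$, by verifying in each case that the anticanonical divisor admits a good Zariski decomposition whose positive part is the pullback of $-K_Y$, and then invoke Lemma \ref{zarviaanti} (more precisely its characterization of the anticanonical model via the good Zariski decomposition) together with Lemma \ref{bigzariski}. The key input is that both $\varphi : X_{mint}\to X$ and $t : Z\to Y$ are birational morphisms whose exceptional divisors are forced to sit inside the negative part, because the relevant discrepancy computations control the coefficients.

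\textbf{Step 1: $Z$ has anticanonical model $Y$.} Since $Y$ is a klt Fano variety by Theorem \ref{antimodel}, the terminalization $t: Z\to Y$ is a $\Q$-factorial terminal variety with $K_Z = t^*K_Y + \sum a_i E_i$ where $a_i \le 0$ for every $t$-exceptional prime divisor $E_i$ (in fact $a_i \in (-1,0]$, and $t$ being crepant away from nothing special we may just use $a_i \le 0$; the relevant point for goodness is only $a_i \ge -1$ combined with the $t$-exceptionality, which gives effectivity via the negativity lemma applied appropriately). Then $-K_Z = t^*(-K_Y) - \sum a_i E_i = t^*(-K_Y) + N_Z$ with $N_Z := -\sum a_i E_i$ effective and $t$-exceptional. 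Since $-K_Y$ is ample, $t^*(-K_Y)$ is nef (and semiample), so $-K_Z = t^*(-K_Y) + N_Z$ is a decomposition into a semiample part plus an effective $t$-exceptional part; because $N_Z$ is $t$-exceptional, $H^0(\mathcal{O}_Z(-mK_Z)) = H^0(\mathcal{O}_Z(-m\,t^*K_Y)) = H^0(\mathcal{O}_Y(-mK_Y))$ for sufficiently divisible $m$, so this is the good Zariski decomposition and $R(-K_Z) = R(-K_Y)$, whence $\Proj R(-K_Z) = Y$.

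\textbf{Step 2: $X_{mint}$ has anticanonical model $Y$.} Write $\psi := \varphi\circ(\text{nothing})$; more precisely apply Lemma \ref{bigzariski} / the analysis in the proof of Theorem \ref{antimodel} to $X$: there is a common resolution with $f^*(-K_X) = P + N$ good Zariski, inducing $g: W\to Y$ with $P = g^*(-K_Y)$. Now factor through $X_{mint}$: since $\varphi: X_{mint}\to X$ is birational and $-K_X$ is big (it is, as $X$ is of Fano type), $-K_{X_{mint}}$ is big as well. By Remark \ref{remk-Zariski decomp}(1) the good Zariski decomposition pulls back: on a common resolution of $X_{mint}$ and $Y$ the positive part of $-K_{X_{mint}}$ is the pullback of $-K_Y$, and since $K_{X_{mint}}$ is $\varphi$-nef and $\varphi$-big (Proposition \ref{minterres}) one checks that the induced map $X_{mint}\to Y$ contracts exactly the negative part, so $-K_{X_{mint}} = h^*(-K_Y) + N'$ with $h: X_{mint}\to Y$ the anticanonical morphism and $N'$ effective $h$-exceptional. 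Hence $R(-K_{X_{mint}}) = R(-K_Y)$ and $\Proj R(-K_{X_{mint}}) = Y$.

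\textbf{Main obstacle.} The delicate point is Step 2: producing the morphism $h: X_{mint}\to Y$ and verifying the section-ring equality, i.e., that the negative part of the Zariski decomposition of $-K_{X_{mint}}$ is $h$-exceptional. The cleanest route is to observe that $X_{mint}$ is of Fano type (it admits a birational morphism from a variety of Fano type, or rather dominates $X$ which is Fano type — one should note $X_{mint}$ is of Fano type since it is a terminal $\Q$-factorial variety with $-K_{X_{mint}}$ big and Fano type is a birational-over-small-maps and resolution-stable property in the relevant sense, via Lemma \ref{cy} applied to the pulled-back Calabi-Yau boundary), so $R(-K_{X_{mint}})$ is finitely generated, then apply Lemma \ref{bigzariski}; the equality of Proj's then follows because both $X_{mint}$ and $X$ have a common resolution $W$ with $g^*(-K_Y)$ as positive part, so $R(-K_{X_{mint}}) = H^0$-sections on $W$ of $mg^*(-K_Y) = R(-K_Y)$. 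One must be a little careful that the minimal terminal resolution $\varphi$ need not be a log resolution, but this is harmless since we only need the bigness of $-K_{X_{mint}}$ and the functoriality of the Zariski decomposition under further blow-ups.
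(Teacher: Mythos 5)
Your Step 1 (the terminalization $Z\to Y$) is fine and matches the paper: by definition of a terminalization, $K_Z+\Delta_Z=t^*K_Y$ with $\Delta_Z\geq 0$ and $t$-exceptional, so $-K_Z=t^*(-K_Y)+\Delta_Z$, and the section rings agree by the projection formula for effective exceptional divisors.

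Your Step 2, however, has a genuine gap. You assert that there is a morphism $h\colon X_{mint}\to Y$ with $-K_{X_{mint}}=h^*(-K_Y)+N'$ and $N'$ effective $h$-exceptional. This is not justified and is in general false: by Lemma~\ref{zarviaanti}, such a morphism exists precisely when the good Zariski decomposition of $-K_{X_{mint}}$ lives on $X_{mint}$ itself, and this need not hold — the whole reason Theorem~\ref{antimap} passes through the intermediate small modification $X_q\dashrightarrow X'$ of Lemma~\ref{zarsqmfano} is to produce a model where the positive part is genuinely nef. Your fallback argument (common resolution $W$ with $g^*(-K_Y)$ as positive part) also elides the real point: the positive part of $f^*(-K_X)$ on $W$ is $g^*(-K_Y)$, but you need the positive part of the pullback of $-K_{X_{mint}}$ to $W$, which is a different divisor, and you never show they coincide.

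The missing step — and the heart of the paper's proof — is a one-line application of the negativity lemma. Since $\varphi\colon X_{mint}\to X$ is a minimal terminal resolution, $K_{X_{mint}}$ is $\varphi$-nef; writing $-K_{X_{mint}}=\varphi^*(-K_X)+E$ with $E$ $\varphi$-exceptional, the negativity lemma forces $E\geq 0$. Then $H^0(\mathcal O_{X_{mint}}(-mK_{X_{mint}}))=H^0(\mathcal O_{X_{mint}}(m\varphi^*(-K_X)))=H^0(\mathcal O_X(-mK_X))$ for sufficiently divisible $m$, directly, with no appeal to Zariski decompositions, Fano-type-ness of $X_{mint}$, finite generation, or any morphism to $Y$. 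Once you have equality of anticanonical rings, equality of the $\Proj$'s is immediate. So the ingredients you list (discrepancy control, $\varphi$-nefness) are the right ones, but the argument should be made through the negativity lemma on $X_{mint}$ rather than through hypothetical anticanonical morphisms.
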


\begin{proof}
It is enough to show that $H^0(\mathcal{O}_{X_{mint}}(-mK_{X_{mint}})) = H^0(\mathcal{O}_X(-mK_X))$ and
$H^0(\mathcal{O}_{Z}(-mK_{Z})) = H^0(\mathcal{O}_X(-mK_X))$ for any sufficiently divisible integer $m>0$.
By applying the negativity lemma, we obtain $-K_{X_{mint}} = \pi^{*}(-K_X) + E$ for some effective $\pi$-exceptional divisor $E$.
This shows the first equality.

If $t : Z \to Y$ is a terminalization of $(Y,0)$, then $K_Z+E=t^*K_Y$ for some effective $t$-exceptional divisor $E$.
This shows the second equality.
\end{proof}

\begin{lemma}[{\cite[Proposition 2.10]{O}}]\label{zarsqmfano}
Let $q : X_q \to X$ be a $\Q$-factorialization.
Then there is a small birational map $X_q \dashrightarrow X'$ to
a $\Q$-factorial variety such that $-K_{X'}$ admits the good Zariski decomposition.
\end{lemma}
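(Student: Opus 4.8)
The plan is to reduce the statement to the Mori dream space structure of $X_q$, the key point being that the divisorial fixed part of $-K_{X_q}$ need not be contracted on the $X_q$-side. First, $X_q$ is again of Fano type: the inverse $X\dashrightarrow X_q$ of the $\Q$-factorialization $q$ is a small birational map, so Corollary \ref{cor-CY is closed} applies. Hence $X_q$ is a $\Q$-factorial Mori dream space, $-K_{X_q}=q^{*}(-K_X)$ is big, $R(-K_{X_q})=R(-K_X)$ is finitely generated, and $\Proj R(-K_{X_q})=Y$. Using finite generation I would isolate the divisorial fixed part: for all sufficiently divisible $m>0$ the fixed divisor of $|-mK_{X_q}|$ equals $mN_0$ for one effective $\Q$-divisor $N_0$, and setting $P_0:=-K_{X_q}-N_0$ one gets $R(P_0)=R(-K_{X_q})$, $P_0$ big, and $|mP_0|$ with no divisorial fixed component; thus the class of $P_0$ lies in $\overline{\Mov}(X_q)$.

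Next I would invoke the Mori chamber decomposition of the movable cone of a Mori dream space (Hu--Keel): $\overline{\Mov}(X_q)$ is a finite union of subcones indexed by the small $\Q$-factorial modifications $\sigma_i:X_q\dashrightarrow X_i$, the $i$-th piece consisting of those classes $\alpha$ with $(\sigma_i)_{*}\alpha$ nef on $X_i$. Picking an index $i$ with $[P_0]$ in the corresponding chamber and writing $\sigma:=\sigma_i$, $X':=X_i$, we obtain that $\sigma_{*}P_0$ is nef; since $\sigma$ is an isomorphism in codimension one, $\sigma_{*}P_0$ is still big, and on the Mori dream space $X'$ every nef $\Q$-divisor is semiample. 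Equivalently, $X'$ is the model on which the ample model $X_q\dashrightarrow \Proj R(P_0)=\Proj R(-K_{X_q})=Y$ of the movable divisor $P_0$ becomes an honest morphism $f':X'\to Y$, and $\sigma$ is a composition of log flops/flips, hence small.

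It remains to check that $-K_{X'}=\sigma_{*}P_0+\sigma_{*}N_0$ is the good Zariski decomposition. The positive part $\sigma_{*}P_0$ is semiample, the negative part $\sigma_{*}N_0$ is effective, and because $\sigma$ is small one has, for $m$ sufficiently divisible,
$$
H^0(\mathcal{O}_{X'}(-mK_{X'}))=H^0(\mathcal{O}_{X_q}(-mK_{X_q}))=H^0(\mathcal{O}_{X_q}(mP_0))=H^0(\mathcal{O}_{X'}(m\sigma_{*}P_0)).
$$
Together with $\sigma_{*}N_0\geq 0$ this forces the natural map $H^0(\mathcal{O}_{X'}(m\sigma_{*}P_0))\to H^0(\mathcal{O}_{X'}(-mK_{X'}))$ to be an isomorphism, so condition (3) of Definition \ref{def-Zardecomp} holds; by uniqueness of the Zariski decomposition this is the decomposition, and it is good. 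Thus $\sigma:X_q\dashrightarrow X'$ is the desired small birational map (it is the map $s$ of Theorem \ref{antimap}, with $f'$ its anticanonical morphism and $\sigma_{*}N_0$ the $f'$-exceptional negative part).

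The main obstacle is precisely that the map must be \emph{small}: an unrestricted run of the $(-K_{X_q})$-program toward $Y$ would in general contract the fixed divisor $N_0$ divisorially. The resolution — and the place where finite generation is essential — is that $N_0$ need not be contracted from $X_q$ at all; it is carried along as the $f'$-exceptional negative part on $X'$, while only the genuinely movable part $P_0$ must be rendered semiample, and that is accomplished by a small modification via the Mori chamber decomposition of $\overline{\Mov}(X_q)$. One should also record the standard facts, used above, that Fano type is stable under small modifications and that nef $\Q$-divisors on a Mori dream space are semiample.
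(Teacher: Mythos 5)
Your proof is correct and takes essentially the same route as the paper: decompose $-K_{X_q}$ into a big movable positive part plus an effective negative part, then use the Mori dream space structure (Hu--Keel chamber decomposition of $\overline{\Mov}(X_q)$) to pass by a small $\Q$-factorial modification to a model on which the positive part is nef, hence semiample. The paper obtains the initial decomposition by citing Nakayama's divisorial Zariski decomposition with movable positive part, whereas you derive the same $P_0+N_0$ from the stabilized fixed divisor of $|-mK_{X_q}|$ via finite generation, and you verify condition (3) of Definition~\ref{def-Zardecomp} explicitly where the paper leaves it implicit --- differences of exposition, not of strategy.
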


\begin{proof}
We always have the divisorial Zariski decomposition $-K_{X_q} = P+N$ with $P$ movable (see \cite[Chapter III]{Na}). Since $X_q$ is a Mori dream space, there is a small birational map $s : X_q \dashrightarrow X'$ to a $\Q$-factorial variety such that $P':=s_*P$ is nef. Set $N':=s_*N$. Then $-K_{X'} = P'+N'$ is the good Zariski decomposition.
\end{proof}

\begin{lemma}\label{flopsft}
Every small $\Q$-factorial modification $X_q \dashrightarrow X'$ (e.g., as in Lemma \ref{zarsqmfano})
can be decomposed into a finite sequence of log flops of klt Calabi-Yau pairs.
\end{lemma}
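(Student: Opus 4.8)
The goal is to show that any small $\Q$-factorial modification $s : X_q \dashrightarrow X'$ between $\Q$-factorial varieties, where $X_q$ is of Fano type, decomposes into a finite chain of log flops of klt Calabi-Yau pairs. The key observation is that a small birational map between $\Q$-factorial Mori dream spaces is always realized as a finite sequence of flops with respect to some movable divisor, and the content of the lemma is to promote the ambient flop structure to one that respects a klt Calabi-Yau pair structure. My plan is first to fix a klt Calabi-Yau pair $(X_q, \Delta_q)$, which exists by Lemma \ref{cy} (pulling back a boundary from $X$ via the small morphism $q$ and using Corollary \ref{cor-CY is closed} if necessary), so that $K_{X_q} + \Delta_q \sim_{\Q} 0$. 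Since $s$ is small, the strict transform $\Delta' := s_* \Delta_q$ satisfies $K_{X'} + \Delta' \sim_{\Q} 0$ as well, and by Corollary \ref{cor-CY is closed} the pair $(X', \Delta')$ is again klt Calabi-Yau.

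Next I would run the standard "$2$-ray game''/movable cone argument. Both $X_q$ and $X'$ are $\Q$-factorial Mori dream spaces with isomorphic divisor class groups (via the small map $s$), and they share the same movable cone $\Mov(X_q)$ inside $\Num^1_\R$; the small $\Q$-factorial modifications of $X_q$ correspond to the chambers of the decomposition of $\Mov(X_q)$ into nef cones of the various models (this is the Mori chamber decomposition restricted to $\Mov$, from \cite{HK}). Pick an ample divisor $A'$ on $X'$ and let $D$ be its strict transform on $X_q$; then $D$ is movable but not nef, and one connects the chamber of $X_q$ to the chamber of $X'$ by a path in $\Mov(X_q)$ crossing finitely many walls. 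Each wall-crossing is a flop: if $Z_i \dashrightarrow Z_{i+1}$ is the $i$-th step, with common contraction $Z_i \to V_i \leftarrow Z_{i+1}$ contracting a $K_{Z_i}$-trivial extremal ray (it is $K$-trivial because $K + \Delta_i \sim_\Q 0$ and $\Delta_i \geq 0$ is the strict transform of $\Delta_q$, so the ray is both $K$-nonpositive and $(K+\Delta)$-trivial, forcing $K_{Z_i}$-triviality on the contracted curves). Thus each step is literally a $(K_{Z_i}+\Delta_i)$-flop, i.e.\ a log flop of the klt Calabi-Yau pair $(Z_i, \Delta_i)$, and by induction $(Z_{i+1}, \Delta_{i+1})$ is again klt Calabi-Yau (flops preserve klt-ness and $K+\Delta \sim_\Q 0$ is preserved under small maps). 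Finiteness of the chain follows from finiteness of the Mori chamber decomposition of the Mori dream space $X_q$.

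The step I expect to require the most care is establishing that the connecting path can be chosen to cross only finitely many walls \emph{and} that each elementary wall-crossing is an honest flop rather than a flip or a more degenerate contraction. One must verify that along a general line segment from the interior of the $X_q$-chamber to the interior of the $X'$-chamber, every wall hit is a codimension-one face, the associated extremal contraction $Z_i \to V_i$ is small (this is automatic since we stay in the interior of $\Mov$, where no divisor is contracted), and the flip $Z_i \dashrightarrow Z_{i+1}$ exists and is $\Q$-factorial — all of which is guaranteed by the Mori dream space property and \cite{BCHM}. The only genuinely new point beyond \cite{HK} is the bookkeeping that the boundary $\Delta_i$ transports compatibly and stays effective with $K + \Delta_i \sim_\Q 0$ throughout, which is immediate because each map in the chain is small. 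Alternatively, and perhaps more cleanly, one can invoke the result that any two $\Q$-factorial small modifications of a klt Calabi-Yau pair are connected by a sequence of flops for that pair (a standard consequence of \cite{BCHM}, cf.\ the proof of finiteness of minimal models), applied directly to $(X_q, \Delta_q)$ and $(X', \Delta')$; then the lemma is essentially a citation together with the observation that $(X', \Delta')$ is klt Calabi-Yau by Corollary \ref{cor-CY is closed}.
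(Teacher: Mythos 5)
Your proposal is correct, and the ``cleaner alternative'' you offer at the end --- invoking the standard BCHM consequence that $\Q$-factorial small modifications of a klt Calabi--Yau pair are connected by a finite sequence of flops for that pair --- is exactly the paper's two-line proof, which cites Lemma~\ref{cy} and \cite[Corollary 1.1.3]{BCHM}. One small cleanup in your detailed wall-crossing version: you do not need (and cannot in general expect) $K_{Z_i}$-triviality of the contracted ray; what matters is $(K_{Z_i}+\Delta_i)$-triviality, which is automatic since $K_{Z_i}+\Delta_i\sim_\Q 0$, and that already makes each elementary wall-crossing a log flop of the klt Calabi--Yau pair.
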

\begin{proof}
By Lemma \ref{cy}, $X_q$ and $X'$ are $\Q$-factorial varieties of klt Calabi-Yau type. Thus they are connected by log flops (cf. \cite[Corollary 1.1.3]{BCHM}).
\end{proof}

Now we prove Theorem \ref{antimap}.

\begin{proof}[Proof of Theorem \ref{antimap}]
Recall that $q : X_q \to X$ is a $\Q$-factorialization of $X$.
First we take a small birational map $s : X_q \dashrightarrow X'$ to another $\Q$-factorial variety as in Lemma \ref{zarsqmfano}
which is a finite sequence of log flops by Lemma \ref{flopsft}.
Then take a minimal terminal resolution $\varphi: X'_{mint}\to X'$ of $X'$ (Proposition \ref{minterres}).
 By Lemma \ref{zarviaanti}, we have the anticanonical morphism $f' : X' \to Y$, and hence, we also have the anticanonical morphism $\widetilde{f}: X'_{mint} \to Y$. Then there exists a birational map $X'_{mint}\dashrightarrow X'_{nrd}$ (which can be considered as the redundant MMP) by Corollary \ref{cor-nonred=relmin}
with a birational morphism $\pi: X'_{nrd}\to Y$.
\end{proof}

The minimal property of a minimal terminalization is explained in Remark \ref{remk-minimality}.
Terminalizations of $(Y,0)$ are maximal in the sense of Proposition \ref{maxismax}.
We say that a variety $Y$ is \emph{birationally dominated} by a variety $Z$ if
there is a birational map $Z\dashrightarrow Y$ which does not extract any divisors.

\begin{proposition}\label{maxismax}
Any $\Q$-Gorenstein Fano type variety $X$ whose anticanonical model is $Y$ is birationally dominated by some terminalization of $(Y,0)$.
More precisely, there exist a small birational map $X\dashrightarrow X'$ to a $\Q$-factorial variety and a terminalization $Z\to Y$ of $(Y,0)$
such that we have a factorization into morphisms $Z\to X' \to Y$.
If there is a morphism $X\to Y$, then we can let $X=X'$.
\end{proposition}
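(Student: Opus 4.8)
The plan is to reduce everything to the situation already analyzed in Theorem \ref{antimap} and then read off the required factorization. First I would replace $X$ by a $\Q$-factorialization $q:X_q\to X$; since $q$ is small, $X_q$ is still of Fano type with the same anticanonical model $Y$ (Corollary \ref{cor-CY is closed}, or directly the negativity lemma as in Lemma \ref{antipres}). Next, apply Lemma \ref{zarsqmfano} to obtain a small birational map $X_q\dashrightarrow X'$ with $X'$ $\Q$-factorial and $-K_{X'}=P'+N'$ the good Zariski decomposition; composing with $q^{-1}$ gives the desired small birational map $X\dashrightarrow X'$. By Lemma \ref{zarviaanti} there is an anticanonical morphism $f':X'\to Y=\Proj R(-K_{X'})=\Proj R(-K_X)$, where the last identification is because small modifications preserve $R(-K_\bullet)$.

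The heart of the argument is producing a terminalization $t:Z\to Y$ that factors through $X'$. Here I would take a minimal terminal resolution $\varphi:X'_{mint}\to X'$ (Proposition \ref{minterres}); then $\widetilde f:=f'\circ\varphi:X'_{mint}\to Y$ is birational, and by Lemma \ref{antipres} (applied to $X'_{mint}\to X'$) the variety $X'_{mint}$ has anticanonical model $Y$ as well, with $-K_{X'_{mint}}=\varphi^*(-K_{X'})+E$ for an effective $\varphi$-exceptional $E$, so $-K_{X'_{mint}}$ admits the good Zariski decomposition $\widetilde f^*(-K_Y)+N''$. However $X'_{mint}$ is merely terminal, not a terminalization of $(Y,0)$ — there may be $\widetilde f$-exceptional divisors with discrepancy $\le 0$ over $Y$ that are not contracted. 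To fix this I would further run the $K_{X'_{mint}}$-MMP relative to $Y$: by \cite{BCHM} this terminates with a relative minimal model $Z\to Y$, and each step is a flip or divisorial contraction over $Y$. Since $Z$ is $\Q$-factorial terminal with $K_Z$ nef over $Y$ and $Y$ is klt, $Z\to Y$ is exactly a terminalization of $(Y,0)$ (alternatively one can invoke the standard terminalization from \cite[Corollary 1.4.3]{BCHM} directly and compare). The point is that the intermediate birational maps $X'_{mint}\dashrightarrow Z$ are all over $Y$, hence do not disturb the factorization through $X'$; one only needs that $X'_{mint}\dashrightarrow Z$, being a relative MMP over $Y$ of a terminal variety, is actually a morphism after possibly replacing $X'_{mint}$ by a further terminal resolution — or, cleaner, one takes $Z$ to be a common terminal resolution of $X'$ and of an abstract terminalization of $Y$, which maps to both. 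Either way we obtain morphisms $Z\to X'\to Y$ with $Z\to Y$ a terminalization of $(Y,0)$.

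For the last sentence, if $X$ itself admits a morphism $X\to Y$, then $-K_X=P+N$ is already the good Zariski decomposition by Lemma \ref{zarviaanti} (with $P=f^*H$), so the map $X_q\dashrightarrow X'$ of Lemma \ref{zarsqmfano} can be taken to be the identity after a $\Q$-factorialization; more precisely, a $\Q$-factorialization $X_q\to X$ already has $-K_{X_q}$ with good Zariski decomposition since $-K_{X_q}=q^*(-K_X)+($effective$)$, and then $X'=X_q$ works, but $X_q\to X$ is small, so we may rename and take $X=X'$ in the statement (the proposition only asks for a small birational map $X\dashrightarrow X'$, and the identity qualifies after this identification). The main obstacle I anticipate is the bookkeeping in the previous paragraph: ensuring that the passage from the minimal terminal resolution $X'_{mint}$ to an honest terminalization $Z$ of $(Y,0)$ can be arranged to still dominate $X'$ by a morphism; this is where one must be careful to run the relative MMP over $Y$ (which contracts only $\widetilde f$-exceptional loci, all inside $\B_+(-K_{X'_{mint}})$) rather than an arbitrary MMP, so that the map to $X'$ is not destroyed — and if a genuine morphism $Z\to X'$ cannot be achieved directly, to instead choose $Z$ as a simultaneous terminalization dominating both $X'$ and $Y$.
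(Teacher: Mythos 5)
Your opening moves match the paper's: pass to a $\Q$-factorialization, apply Lemma \ref{zarsqmfano} to get a small birational map $X\dashrightarrow X'$ with $-K_{X'}=P+N$ the good Zariski decomposition, and obtain the anticanonical morphism $f':X'\to Y$ via Lemma \ref{zarviaanti}. The divergence, and the gap, comes next.

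You propose to take a \emph{minimal terminal resolution} $\varphi:X'_{mint}\to X'$ (i.e.\ a terminalization with boundary $0$) and then run a $K_{X'_{mint}}$-MMP relative to $Y$. This fails on two counts, and you half-notice the second but not the first. First, $X'_{mint}\to X'$ extracts only divisors $E$ with $a(E;X',0)\le 0$, whereas a terminalization of $(Y,0)$ must extract all $E$ with $a(E;Y,0)\le 0$. These sets differ: from $K_{X'}+N=f'^{*}K_Y$ one has $a(E;Y,0)=a(E;X',N)\le a(E;X',0)$, and the inequality can be strict, so there may be divisors with $a(E;Y,0)\le 0 < a(E;X',0)$ that $X'_{mint}$ (being terminal) cannot see. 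The relative MMP over $Y$ can only \emph{contract} divisors; it cannot conjure the missing ones, so its output $Z$ is a $\Q$-factorial terminal variety with $K_Z$ $\pi$-nef over $Y$ but not, in general, a terminalization of $(Y,0)$ --- the claim ``$Z$ terminal and $K_Z$ nef over $Y$ $\Rightarrow$ $Z$ is a terminalization of $(Y,0)$'' is false precisely because $(Z,E_Z)$ need not be a terminal \emph{pair}. Second, as you yourself flag, the flips in that relative MMP do not commute with $\varphi$, so even if the output were a terminalization there is no reason for it to map to $X'$. Your fallback --- a common terminal resolution of $X'$ and of some terminalization of $(Y,0)$ --- again need not be a terminalization of $(Y,0)$, since blowing up a terminalization further destroys effectivity of the crepant boundary.

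The paper's fix is a single, sharper choice of boundary: take a terminalization $g:Z\to X'$ of the pair $(X',N)$ (not $(X',0)$). Then $K_Z+N_Z=g^{*}(K_{X'}+N)=(f'\circ g)^{*}K_Y$ with $N_Z\ge 0$ and $(Z,N_Z)$ terminal, so $f'\circ g:Z\to Y$ is by definition a terminalization of $(Y,0)$, and the morphism $Z\to X'$ is built in from the start. (Here $(X',N)$ is klt because $X'$ is of Fano type with a klt Calabi--Yau boundary $\Delta'\ge N$ by Remark \ref{remk-Zariski decomp}(2).) The essential point you are missing is that one should terminalize with respect to the boundary $N$ coming from the Zariski decomposition, because $N$ is exactly the crepant boundary making $(X',N)\to (Y,0)$ crepant; a minimal terminal resolution of $X'$ alone is the wrong object.
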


\begin{proof}
Since a $\Q$-factorialization is small birational, by using Lemma \ref{zarsqmfano}, we can take a small birational map $X\dashrightarrow X'$ to a $\Q$-factorial variety $X'$ such that $-K_{X'}$ admits the good Zariski decomposition $-K_{X'}=P+N$. We obtain a birational morphism $f:X'\to Y$ by applying Lemma \ref{zarviaanti}.
By \cite[Lemma 1.6]{HK}, for each $f$-exceptional prime divisor $E$ over $X'$, we have
$a(E;Y,0) \leq 0$.
By \cite[Theorem 1.4.3]{BCHM}, we can take a terminalization $g:Z\to X'$
of $(X',N)$. Since $a(E';Y,0)=a(E';X',N)$ for any $g$-exceptional divisor $E'$, it follows that the composition $f\circ g:Z\to Y$ is a terminalization of $(Y,0)$.
\end{proof}


\begin{corollary}\label{finite}
For a given klt Fano variety $Y$, there are only finitely many $\Q$-factorial Fano type varieties having $Y$ as the anticanonical model.
\end{corollary}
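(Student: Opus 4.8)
\textbf{Proof proposal for Corollary \ref{finite}.}

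The plan is to reduce the statement to the finiteness of terminalizations of the fixed klt Fano variety $Y$. By Proposition \ref{maxismax}, every $\Q$-factorial Fano type variety $X$ with anticanonical model $Y$ admits a small birational map $X\dashrightarrow X'$ to a $\Q$-factorial variety with a birational morphism $f:X'\to Y$, and there is a terminalization $Z\to Y$ of $(Y,0)$ factoring as $Z\to X'\to Y$; here, since $X$ is already $\Q$-factorial, Proposition \ref{maxismax} lets us take $X'=X$ whenever there is a morphism $X\to Y$. First I would observe that two $\Q$-factorial varieties connected by a small birational map have the same Picard number and, by Corollary \ref{cor-CY is closed}, both remain of Fano type with anticanonical model $Y$; so it suffices to bound, up to small birational equivalence, the set of such $X$ admitting a morphism to $Y$.

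Next I would invoke the finiteness of terminalizations: for the fixed klt pair $(Y,0)$ there are only finitely many terminalizations $Z_1,\dots,Z_k$ of $(Y,0)$ up to isomorphism over $Y$ — this is a standard consequence of \cite[Corollary 1.4.3]{BCHM} (the set of extremal rays / marked minimal models over $Y$ is finite, equivalently there are finitely many ways to extract the divisors with discrepancy in $(-1,0]$ over $Y$ and then run a relative MMP). By Proposition \ref{maxismax}, any $X$ (or its small modification $X'$) having $Y$ as anticanonical model receives a birational morphism $Z_j\to X'$ from one of these finitely many $Z_j$, and this morphism does not extract divisors. Thus each such $X'$ is obtained from one of the finitely many $Z_j$ by contracting a subset of the $Z_j$-exceptional prime divisors (those not already $f$-exceptional on $X'$); since each $Z_j$ has only finitely many exceptional prime divisors over $Y$, there are only finitely many intermediate contractions $Z_j\to X'\to Y$, hence finitely many $X'$ up to isomorphism over $Y$. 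Finally, since small modifications preserve the property and there are only finitely many $\Q$-factorial small modifications of a Mori dream space (again by \cite{BCHM}, \cite{HK}), we conclude there are only finitely many $\Q$-factorial Fano type varieties $X$ with anticanonical model $Y$.

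The main obstacle I expect is making the counting in the middle step precise: one must check that passing from $X$ to its small $\Q$-factorial modification $X'$ (necessary because Proposition \ref{maxismax} only guarantees a morphism to $Y$ after a small modification) does not multiply the possibilities uncontrollably, and that ``$X'$ is an intermediate variety of the morphism $Z_j\to Y$'' genuinely pins $X'$ down to finitely many choices — this uses that a birational morphism between normal varieties is determined by which exceptional prime divisors of the source it contracts. Once that bookkeeping is in place, together with the finiteness of terminalizations of $(Y,0)$ and of small $\Q$-factorial modifications of a fixed Mori dream space, the corollary follows.
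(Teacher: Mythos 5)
Your proof is essentially correct, but it takes a genuinely different route from the one in the paper, and the route it takes conceals one point you have flagged only as an ``obstacle.''

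The paper's argument is shorter: fix a single terminalization $Z$ of $(Y,0)$. By Lemma \ref{antipres} $Z$ has anticanonical model $Y$, so by Theorem \ref{antimodel} it is of Fano type and hence a Mori dream space. By Theorem \ref{antimap} and Proposition \ref{maxismax}, $Z$ birationally dominates every $\Q$-factorial Fano type variety with anticanonical model $Y$; since a Mori dream space admits only finitely many birational contraction maps to $\Q$-factorial targets, the corollary follows at once. Note that the paper goes on to observe that Corollary \ref{finite} itself \emph{implies} finiteness of terminalizations of $Y$, treating that as an output, not an input.

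You go in the opposite direction: you invoke finiteness of terminalizations $Z_1,\dots,Z_k$ of $(Y,0)$ as a preliminary lemma, then count birational contractions out of each $Z_j$ and finish with finiteness of small $\Q$-factorial modifications. The logic is sound, but two remarks are in order. First, your citation of \cite[Corollary 1.4.3]{BCHM} for finiteness of terminalizations is imprecise: that result gives \emph{existence}; finiteness (in this situation) ultimately rests on the same Mori-dream-space finiteness that the paper uses directly, so your preliminary lemma is not really a shortcut. For klt Fano $Y$, finiteness of terminalizations does hold because any one terminalization is itself a Fano type Mori dream space and the others are among its finitely many small $\Q$-factorial modifications, but this needs to be said. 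Second, your middle bookkeeping step is not literally correct as stated: a birational morphism $Z_j\to X'$ is \emph{not} determined by the set of prime divisors it contracts (distinct small modifications of the same target contract the same divisors), so ``contracting a subset of the exceptional prime divisors'' does not by itself pin down $X'$. You do patch this at the end by invoking finiteness of small $\Q$-factorial modifications of a Mori dream space, and with that patch the count closes; but that patch is doing the real work, and once you invoke it you might as well have used the Mori dream space finiteness of birational contractions from a single $Z$, which is exactly what the paper does. In short, correct conclusion, more roundabout path, and your preliminary lemma on finiteness of terminalizations should either be proved by the same MDS argument or cited more carefully than \cite{BCHM} alone.
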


\begin{proof}
Recall that any terminalization $Z$ of $(Y,0)$ has the anticanonical model $Y$ by Lemma \ref{antipres}, and thus by Theorem \ref{antimodel}, $Z$ is a variety of Fano type. By Theorem \ref{antimap} and Proposition \ref{maxismax}, $Z$ birationally dominates any $\Q$-factorial Fano type variety having $Y$ as the anticanonical model. Since any $\Q$-factorial variety of Fano type is a Mori dream space, there are only finitely many birational contraction maps $Z \dashrightarrow X'$ to a $\Q$-factorial Mori dream space. Thus the assertion follows.
\end{proof}

Corollary \ref{finite} in particular implies that there are only finitely many terminalizations of $Y$.

\subsection{Anticanonical maps of Calabi-Yau type varieties}\label{antimapcysec}
In this subsection, we state and prove an analogue of Theorem \ref{antimap} allowing lc singularities.
We first prove that if the variety $Y$ in Corollary \ref{finite} is not klt, then the situation changes.

\begin{proposition}\label{no maxt}
Let $Y$ be a $\Q$-Gorenstein variety with ample anticanonical divisor. If $Y$ contains  singularities worse than klt, then there are infinitely many $\Q$-factorial projective varieties $Z$ having terminal singularities with the anticanonical model $Y$.
\end{proposition}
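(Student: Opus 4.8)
The goal is to produce, from a single $\Q$-Gorenstein variety $Y$ with $-K_Y$ ample and a non-klt singularity, an infinite family of $\Q$-factorial terminal varieties $Z$ all having $Y$ as anticanonical model. The strategy is: pick a point $y \in Y$ lying on a non-klt center, repeatedly blow up over $y$ to create divisors $E$ with discrepancy $a(E;Y,0) \le -1$, and observe that such divisors are automatically contracted by the anticanonical morphism, so that extracting them (via a relative MMP over $Y$) changes the variety but not the ring $R(-K_Y)$. The key numerical input is that a divisor $E$ over $Y$ with $a(E;Y,0)\le -1$ satisfies: on any birational model $W \to Y$ on which $E$ appears, the coefficient of $E$ in $-K_W - g^*(-K_Y)$ is $\ge 1$, hence $E \subseteq \B_+(g^*(-K_Y)) = \B_+(-K_W)$ (using $-K_Y$ big, which holds since it is ample); thus extracting $E$ does not change the section ring, exactly as in Corollary~\ref{cor-same anticn ring} read backwards.

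\textbf{Key steps, in order.} First I would choose a log resolution $h : W_0 \to Y$ and a prime divisor $E_0$ on $W_0$ with $a(E_0;Y,0) = -b \le -1$; such $E_0$ exists precisely because $Y$ is not klt. Second, I would run a $(K_{W_0})$-MMP over $Y$ relative to an appropriate boundary to contract all $h$-exceptional divisors \emph{except} $E_0$ (and except finitely many others if forced), arriving at a $\Q$-factorial variety $Z_1 \to Y$ that is terminal and extracts exactly $E_0$; more robustly, one takes a terminalization $Z \to Y$ of $(Y,0)$ — which exists and has anticanonical model $Y$ by Lemma~\ref{antipres} — and then runs MMP over $Y$ in various orders / with various $\Q$-factorializations to realize infinitely many distinct outputs. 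Third, and this is the crux: to get genuinely \emph{infinitely many} non-isomorphic such $Z$, I would iterate the extraction, blowing up a point on the strict transform of the non-klt locus at each stage. Concretely, if $y$ lies on a non-lc center, or more simply if one has a single divisor $E$ over $Y$ with $a(E;Y,0)\le -1$, then one can blow up further to manufacture a sequence $E^{(1)}, E^{(2)}, \dots$ of distinct divisors over $Y$, each still with discrepancy $\le -1$ relative to $(Y,0)$ after the chain of blow-ups (discrepancies only decrease under further blow-ups of a non-klt pair, by the standard discrepancy computation $a(E'; Y,0) = a(E'; W, -\sum a_i E_i)$ and monotonicity), hence each contained in the augmented base locus of the pulled-back anticanonical class. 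Fourth, for each finite subset $S$ of $\{E^{(1)}, E^{(2)}, \dots\}$ I would extract exactly the divisors in $S$ via a relative MMP over $Y$ to obtain a $\Q$-factorial terminal $Z_S \to Y$; since $Z_S \to Y$ contracts only divisors with discrepancy $\le -1$ — i.e.\ only divisors in $\B_+$ of the anticanonical pullback — the argument of Proposition~\ref{prop-redundant}/Corollary~\ref{cor-same anticn ring} (in the ``redundant extraction'' direction) shows $R(-K_{Z_S}) = R(-K_Y)$, so $Z_S$ has anticanonical model $Y$. As $|S| = \rho(Z_S) - \rho(Z_\emptyset)$ grows, the $Z_S$ are pairwise non-isomorphic, giving infinitely many.

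\textbf{The main obstacle.} The delicate point is step three: producing an \emph{infinite} supply of divisors over $Y$ that (a) are pairwise distinct as valuations, (b) can be simultaneously extracted onto a single $\Q$-factorial terminal model, and (c) each has discrepancy $\le -1$ so as to land in the augmented base locus. Merely having one non-klt divisor gives only finitely many ``extra'' divisors of discrepancy $\le -1$ in general — the non-klt locus could be a single divisorial valuation. The resolution is to blow up a \emph{closed point} $y$ lying on a non-klt center of positive dimension, or to blow up along the non-lc/non-klt center repeatedly: each blow-up of a point on (the strict transform of) a non-klt locus produces a new exceptional divisor whose discrepancy over $Y$ is at most $a(\text{previous center};Y,0) + (\dim - 1) \le -1 + (\dim-1)$, which is $\le -1$ only when $\dim \le 1$, so in higher dimension one must instead track that the discrepancy of the \emph{deepest} exceptional divisor stays $\le -1$, or work with the non-lc ideal sheaf / a non-klt center that is a curve and blow up points on it. I expect the cleanest route is: reduce to the case $\dim Y = 2$ slices or, better, use that over a non-klt point the pair has infinite ``length'' — formally, one invokes that $(Y,0)$ not klt forces the existence of infinitely many divisors $E$ with $a(E;Y,0) \le -1$ lying over a fixed subvariety, each extractable, exactly because klt-ness (equivalently finiteness of such divisors, by ACC-type boundedness of terminalizations) fails. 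Making the extraction of a prescribed finite set of these onto one $\Q$-factorial terminal model precise — via a carefully chosen boundary and a run of the relevant MMP from a high enough resolution — is the only real work; once that is set up, the invariance of $R(-K)$ is immediate from the $\B_+$ criterion established earlier in the paper.
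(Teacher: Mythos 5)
The high-level strategy — extract divisors over $Y$ in a way that does not change $R(-K)$, and iterate — is the right one, and you correctly see that the key is to produce new exceptional divisors over $Y$ whose presence does not affect the anticanonical ring. But you are anchored to the wrong numerical criterion, and this is what makes the iteration (which you yourself flag as the crux) break down.

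The condition for a birational morphism $\sigma : Z \to Y$ to preserve $R(-K)$ is not that the $\sigma$-exceptional divisors have discrepancy $\leq -1$; it is that they all have discrepancy $\leq 0$, so that $-K_Z = \sigma^*(-K_Y) + (\text{effective $\sigma$-exceptional})$. (Every $\sigma$-exceptional divisor automatically lies in $\B_+(-K_Z) = \Exc(\sigma)$ when $-K_Y$ is ample, so the $\B_+$ observation you make is true but not the discriminating one.) Insisting on $\leq -1$ is strictly too much, and indeed your attempt to manufacture an infinite supply of such divisors fails: the claim that discrepancies only decrease under further blow-ups of a non-klt pair is false (blowing up a codimension-$k$ smooth center inside a divisor with $a = -1$ yields $a = k - 2$, which is positive as soon as $k \geq 3$), and your fallback assertion that $(Y,0)$ non-klt forces infinitely many divisors with $a \leq -1$ over a fixed center is also false — a simple elliptic surface singularity has exactly one such divisor. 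So steps three and four of your plan, as written, do not go through.

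The paper's proof fixes both points at once. It first uses \cite[Corollary 1.4.4]{BCHM} to produce a $\Q$-factorial model $\pi : X \to Y$ extracting all divisors with $a(\cdot; Y,0) \leq -1$, writing $-K_X = \pi^*(-K_Y) + \Gamma_1 + \Gamma_2$ with $(X,\Gamma_1)$ klt and all coefficients of $\Gamma_2$ at least $1$; it then passes to a minimal terminal resolution $X_{mint}$ (so the output is terminal and $\Q$-factorial), and from there it repeatedly blows up \emph{codimension-two} loci $V$ lying inside the strict transform of a fixed component of $\Gamma_2$ and through smooth points. A codimension-two smooth blow-up has relative discrepancy exactly $1$, so the new exceptional $F$ contributes $\mult_V(\cdot) - 1 \geq 0$ to the effective divisor $-K_Z - (\text{pullback of} -K_Y)$; in particular $a(F;Y,0) \leq 0$, not $\leq -1$, which is enough. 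Because each blow-up is performed inside the strict transform of the original component of $\Gamma_2$ (whose coefficient is unchanged), the process never runs out, and each output is terminal, $\Q$-factorial, and has anticanonical model $Y$. So the fix you should make to your own argument is: (i) relax the target condition to discrepancy $\leq 0$; (ii) blow up codimension-two centers inside a discrepancy-$\leq -1$ divisor (or its strict transform), not points; and (iii) obtain the initial such divisor, together with $\Q$-factoriality and terminality, from \cite[Corollary 1.4.4]{BCHM} plus a minimal terminal resolution, rather than by ad hoc relative MMP runs from a log resolution.
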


\begin{proof}
By \cite[Corollary 1.4.4]{BCHM}, we can take a birational morphism $\pi : X \rightarrow Y$
from a $\Q$-factorial variety $X$ such that for some effective $\Q$-divisors $\Gamma_1$ and $\Gamma_2$ on $X$,
we have
$$
-K_X = \pi^{*}(-K_Y) + \Gamma_1+\Gamma_2
$$
where $(X,\Gamma_1)$ is klt and the coefficients of $\Gamma_2$ are at least $1$.
Note that the anticanonical model of $X$ is $Y$. Now let $\psi : X_{mint} \rightarrow X$ be a minimal terminal resolution.
Then the anticanonical model of $X_{mint}$ is also $Y$. We may write
$$
-K_{X_{mint}} = \psi^{*}(-K_X)+E = \psi^{*}\pi^{*}(-K_Y) + \psi^{*}\Gamma_1 + \psi^{*}\Gamma_2 + E,
$$
where $E$ is an effective $\psi$-exceptional $\Q$-divisor. Now let $\phi : Z \rightarrow X_{mint}$ be a blow-up at codimension two locus $V$ such that $V$ is contained in an irreducible component of $\psi^{-1}_{*}\Gamma_2$ and $V$ contains smooth points of $X_{mint}$.
Then we have
$$
-K_{Z} = \phi^{*}(-K_{X_{mint}}) - F =  \phi^{*}\psi^{*}\pi^{*}(-K_Y) + \phi^{*}\psi^{*}\Gamma_1 +
(\phi^{*}\psi^{*}\Gamma_2 -F)+ \phi^{*}E,
$$
where $F$ is a $\phi$-exceptional prime divisor. By the conditions on $V$, the $\Q$-divisor $\phi^{*}t^{*}\Gamma_2 -F$ is effective, and hence, the anticanonical model of $Z$ is $Y$.
By blowing up a codimension two locus $V'$ contained in an irreducible component of $(\psi\circ\phi)^{-1}_*\Gamma_2$ containing smooth points of $Z$,
we obtain a variety $Z'$ whose anticanonical model is $Y$.
By successively blowing up codimension two locus similarly, we can obtain infinitely many varieties having $Y$ as the anticanonical model.
\end{proof}

Proposition \ref{no maxt} shows that there are no `maximal' model in the sense of Proposition \ref{maxismax}.
Despite this inconvenience, we still have the following theorem.

\begin{theorem}\label{antimapofcy}
Let $X$ be a $\Q$-Gorenstein variety of Calabi-Yau type such that $-K_X$ is big and $R(-K_X)$ is finitely generated, and let $f: X \dashrightarrow Y=\Proj R(-K_X)$ be the anticanonical map. Then we can take $\Q$-factorial varieties $X_d$, $X'$, $X'_{mint}$, and $X'_{nrd}$ having $Y$ as the anticanonical model with the following commutative diagram
$$
\xymatrix{
& X'_{mint} \ar@{-->}[r]^{r}  \ar[d]_{\varphi} \ar[rd]^{\widetilde{f}} & X'_{nrd} \ar[d]^{\pi}\\
X_d \ar@{-->}[r]^s \ar[d]_d   & X' \ar[r]^{f'} & Y \\
X \ar@{-->}@/_1pc/[rru]^f &  &
}
$$
where $d$ is a dlt blow-up of an lc Calabi-Yau pair $(X, \Delta)$, $s$ is a finite sequence of log flops, $f'$ and $\widetilde{f}$ are anticanonical morphisms, $\varphi$ and $\pi$ are minimal terminal resolutions, and $r$ is a redundant MMP to a non-redundant model $X'_{nrd}$.
\end{theorem}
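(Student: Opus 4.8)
The plan is to run the proof of Theorem~\ref{antimap} essentially verbatim, with the one substitution forced by the weaker hypotheses: in the merely lc setting a $\Q$-factorialization of $X$ need not exist, so we replace $q\colon X_q\to X$ by a dlt blow-up $d\colon X_d\to X$ of a carefully chosen lc Calabi-Yau pair $(X,\Delta)$. Since $X$ is of Calabi-Yau type, Theorem~\ref{lcsing} shows $Y=\Proj R(-K_X)$ is an lc Fano variety, so $-K_Y$ is ample. First I would construct $(X,\Delta)$ exactly as in the proof of Theorem~\ref{antimodel} (whose lc analogue is Theorem~\ref{lcsing}): on a common log resolution $f\colon W\to X$, $g\colon W\to Y$ with $-K_W=g^*(-K_Y)+N+F$, pick a general $P_1\sim_\Q g^*(-K_Y)$ with $P_1+N+F$ of snc support, and set $\Delta:=f_*(P_1+N+F)$. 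Since $-K_X$ is big and $X$ is of Calabi-Yau type, the coefficients of $N+F$ --- hence those of the negative part of $-K_X$ --- are at most $1$, so $(X,\Delta)$ is an lc Calabi-Yau pair. The point of this particular $\Delta$ is that the exceptional divisors $E$ extracted by a dlt blow-up $d$ of $(X,\Delta)$ are lc places of $(X,\Delta)$, hence (as $P_1$ is general) lc places of $Y$; for such $E$ one checks $a(E;X,0)\le 0$, equivalently $-K_{X_d}=d^*(-K_X)+F_d$ with $F_d$ effective and $d$-exceptional, so that $H^0(\mathcal{O}_{X_d}(-mK_{X_d}))=H^0(\mathcal{O}_X(-mK_X))$ for $m$ sufficiently divisible (as in Lemma~\ref{antipres}). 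In particular $R(-K_{X_d})$ is finitely generated and $X_d$ has anticanonical model $Y$.

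With $d\colon X_d\to X$ fixed, the rest is a transcription of the Fano-type argument. The pair $(X_d,\Delta_d)$ with $K_{X_d}+\Delta_d=d^*(K_X+\Delta)$ is dlt, hence lc, and $-(K_{X_d}+\Delta_d)\sim_\Q 0$, so $X_d$ is a $\Q$-factorial variety of Calabi-Yau type with $-K_{X_d}$ big and finitely generated anticanonical ring; by Lemma~\ref{bigzariski}, $-K_{X_d}$ admits a good birational Zariski decomposition. Following the Calabi-Yau analogues of Lemmas~\ref{zarsqmfano} and~\ref{flopsft} (the former needing only that $R(-K_{X_d})$ is finitely generated, the latter using that $\Q$-factorial varieties carrying crepant-birational dlt Calabi-Yau structures are connected by log flops), I would take a small birational map $s\colon X_d\dashrightarrow X'$, a finite sequence of such log flops, onto a $\Q$-factorial variety with $-K_{X'}=P'+N'$ a good Zariski decomposition. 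Log flops preserve all section rings, so $X'$ also has anticanonical model $Y$, and Lemma~\ref{zarviaanti} provides the anticanonical morphism $f'\colon X'\to Y$.

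Next I would take a minimal terminal resolution $\varphi\colon X'_{mint}\to X'$ (Proposition~\ref{minterres}). The negativity lemma gives $-K_{X'_{mint}}=\varphi^*(-K_{X'})+E$ with $E\ge 0$ $\varphi$-exceptional, so $X'_{mint}$ still has anticanonical model $Y$, the composite $\widetilde f:=f'\circ\varphi\colon X'_{mint}\to Y$ is the anticanonical morphism, and $-K_{X'_{mint}}=\widetilde f^*(-K_Y)+(\varphi^*N'+E)$ is a good Zariski decomposition (the second summand is effective and $\widetilde f$-exceptional). Thus $X'_{mint}$ is $\Q$-factorial terminal, $-K_{X'_{mint}}$ is big, and it admits a good Zariski decomposition, so Corollary~\ref{cor-nonred=relmin} yields a finite sequence of redundant divisorial contractions and redundant flips $r\colon X'_{mint}\dashrightarrow X'_{nrd}$ to a non-redundant model with a birational morphism $\pi\colon X'_{nrd}\to Y$; by Corollary~\ref{cor-same anticn ring}, $X'_{nrd}$ also has anticanonical model $Y$. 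Assembling $d,s,\varphi,f',\widetilde f,r,\pi$ produces the stated diagram, and commutativity is automatic because every arrow is compatible with the anticanonical morphisms to $Y$.

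The hard part is the first step. In contrast to the Fano case, where the initial modification $q$ is small and hence trivially preserves every section ring, a dlt blow-up genuinely extracts divisors, and a bad choice of $\Delta$ --- say one whose lc center lies over a smooth point of $X$, as already happens for $(\P^2,\text{nodal cubic})$ --- produces an $X_d$ whose anticanonical model is strictly larger than $Y$. The whole point of building $\Delta$ from the Zariski decomposition, which is legitimate precisely because $-K_X$ is big so that the negative part has coefficients at most $1$, is to confine the exceptional divisors of the dlt blow-up to discrepancy-at-most-zero places of $X$; making this confinement precise --- i.e.\ showing that $-K_{X_d}-d^*(-K_X)$ is effective --- is the crux, and once it is established the remainder is the proof of Theorem~\ref{antimap} run word for word over $Y$.
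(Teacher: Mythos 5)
Your overall decomposition — dlt blow-up, small modification to a $\Q$-factorial model with good Zariski decomposition, minimal terminal resolution, redundant MMP over $Y$ — matches the paper's, which indeed runs the Fano-type proof with the $\Q$-factorialization replaced by a dlt blow-up and with Lemmas~\ref{zarofcy} and~\ref{flopscy} playing the roles of Lemmas~\ref{zarsqmfano} and~\ref{flopsft}.

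The place where you part company with the paper, and where you yourself flag the difficulty, is the first step: you insist on building $\Delta$ from the Zariski decomposition precisely so that the divisors extracted by the dlt blow-up have $a(E;X,0)\le 0$, i.e.\ $-K_{X_d}-d^*(-K_X)$ effective, and then conclude $H^0(-mK_{X_d})=H^0(-mK_X)$. Your instinct that the choice of $\Delta$ matters is sound; the issue is that the condition $a(E;X,0)\le 0$ is \emph{stronger} than what you actually have. Unwinding the definitions on a common log resolution $f\colon W\to X$, $g\colon W\to Y$ with $-K_W=g^*(-K_Y)+N+F=f^*(-K_X)+F$: for a $P_1$ general, the $f$-exceptional lc places $E$ of your $(X,\Delta)$ are exactly those with $\mult_E(N+F)=1$, equivalently $a(E;Y,0)=-1$, and $a(E;X,0)=-\mult_E F$. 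Being an lc place of $Y$ forces $\mult_E(N+F)=1$ but gives \emph{no} sign constraint on $\mult_E F$ alone; indeed, if the center of $E$ on $X$ lies in the smooth or terminal locus, then $a(E;X,0)>0$ automatically, so $\mult_E F<0$ and $\mult_E N>1$ and your claim fails. So the crux, as you have stated it, is not true in the form you assert, and the sentence "for such $E$ one checks $a(E;X,0)\le 0$" is a genuine gap, not a routine verification.

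The paper sidesteps exactly this: Lemma~\ref{zarofcy} does not compare $-K_{X_d}$ with $d^*(-K_X)$ at all. Instead it compares $X_d$ directly with $Y$: it applies the dlt-type extraction result of Fujino over $Y$ to the set $\mathcal{E}$ of non-$g$-exceptional $h$-exceptional divisors, producing $b\colon X'\to Y$ whose exceptional divisors are exactly $\mathcal{E}$, then writes $-K_{X'}=b^*(-K_Y)-\sum_{E\in\mathcal{E}}a(E;Y,0)E$ with $-\sum a(E;Y,0)E\ge 0$, so that $X'$ (and by smallness, $X_d$) has anticanonical model $Y$. That is, the relevant effectivity is of $-K_{X_d}$ relative to $Y$, using $a(E;Y,0)\le 0$, which is a statement about $Y$ rather than about $X$. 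If you want to repair your argument, this is the substitution you need: prove that every divisor extracted by $d$ has $a(E;Y,0)\le 0$ (lc places of $Y$ trivially satisfy this) and then argue that the positive part of the birational Zariski decomposition of $-K_{X_d}$ is still $g^*(-K_Y)$, rather than trying to compare $H^0(-mK_{X_d})$ with $H^0(-mK_X)$, which can genuinely differ.

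One further remark worth keeping in mind: your suspicion that a carelessly chosen $\Delta$ can change the anticanonical model is correct, and it is not just a matter of convenience. If $\Delta$ is a quartic in $\P^3$ with an ordinary triple point, the dlt blow-up of $(\P^3,\Delta)$ is the blow-up of the triple point, whose anticanonical divisor is ample, hence its own anticanonical model $\ne\P^3$. So the phrase "a dlt blow-up of an lc Calabi-Yau pair" in the statement must be read as an existential choice, and the construction of the particular $\Delta$ (yours from the Zariski decomposition of $-K_X$, or more directly one for which $(X,\Delta)$ is already crepant over $Y$) is genuinely part of the proof, not a cosmetic step.
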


\begin{remark}
The varieties $X'$, $X'_{mint}$, and $X_{nrd}'$ are of lc weak Fano type by Lemmas \ref{zarviaanti} and \ref{cy=lc}.
\end{remark}

To prove Theorem \ref{antimapofcy}, we need some lemmas.

\begin{lemma}
Let $X$ be a $\Q$-Gorenstein variety of Calabi-Yau type such that $-K_X$ is big and $R(-K_X)$ is finitely generated, and let $\varphi: X_{mint} \to X$ be a minimal terminal resolution. 
Then $X_{mint}$ is a variety of Calabi-Yau type having $Y=\Proj R(-K_X)$ as the anticanonical model.
\end{lemma}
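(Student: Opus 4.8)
The plan is to reduce everything to two assertions: first, that $X_{mint}$ is of Calabi-Yau type, and second, that its anticanonical model coincides with $Y$. The second assertion is the easy one and is essentially already proved: by the negativity lemma applied to the birational morphism $\varphi : X_{mint} \to X$, we may write $-K_{X_{mint}} = \varphi^{*}(-K_X) + E$ for some effective $\varphi$-exceptional $\Q$-divisor $E$ (this is exactly the computation in the proof of Lemma \ref{antipres}). Consequently $H^0(\mathcal{O}_{X_{mint}}(-mK_{X_{mint}})) = H^0(\mathcal{O}_X(-mK_X))$ for every sufficiently divisible $m>0$, so $R(-K_{X_{mint}}) = R(-K_X)$ as graded rings and the anticanonical model of $X_{mint}$ is $Y = \Proj R(-K_X)$. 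In particular $R(-K_{X_{mint}})$ is finitely generated and $-K_{X_{mint}}$ is big, since $-K_{X_{mint}} \geq \varphi^{*}(-K_X)$ and $-K_X$ is big.

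For the first assertion I would argue as follows. Since $X$ is of Calabi-Yau type, Theorem \ref{lcsing} tells us that $Y = \Proj R(-K_X)$ is an lc Fano variety. Now $X_{mint}$ is a $\Q$-Gorenstein (indeed $\Q$-factorial terminal) variety with $-K_{X_{mint}}$ big, $R(-K_{X_{mint}})$ finitely generated, and anticanonical model $Y$ by the previous paragraph; since $Y$ is an lc Fano variety, the converse direction of Theorem \ref{lcsing} (applied to $X_{mint}$ in place of $X$) immediately gives that $X_{mint}$ is of Calabi-Yau type. This is the cleanest route, because it lets Theorem \ref{lcsing} do all the work rather than reconstructing a boundary divisor by hand.

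Alternatively, and perhaps more in the spirit of the surrounding arguments, one can construct the boundary directly. Fix an effective $\Q$-divisor $\Delta$ on $X$ with $(X,\Delta)$ an lc Calabi-Yau pair, and let $\Delta_{mint}$ be defined by $K_{X_{mint}} + \Delta_{mint} = \varphi^{*}(K_X + \Delta)$; since $\varphi$ is a minimal terminal resolution, all discrepancies $a(E; X,\Delta)$ for $\varphi$-exceptional $E$ are $\geq -1$ (indeed the statement of Proposition \ref{minterres} forces $K_{X_{mint}}$ to be $\varphi$-nef, and combined with $\varphi$-bigness the negativity lemma keeps the $\varphi$-exceptional coefficients of $\Delta_{mint}$ at most $1$), so $\Delta_{mint}$ is effective and $(X_{mint}, \Delta_{mint})$ is an lc pair with $-(K_{X_{mint}}+\Delta_{mint}) = \varphi^{*}(-(K_X+\Delta)) \sim_{\Q} 0$; hence $X_{mint}$ is of Calabi-Yau type. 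The one point requiring a little care here — and the main obstacle in this second approach — is verifying that the minimal terminal resolution does not introduce coefficients exceeding $1$ in $\Delta_{mint}$ when $\Delta$ already has components with coefficient $1$; but this follows from the $\varphi$-nefness and $\varphi$-bigness of $K_{X_{mint}}$ via the negativity lemma exactly as in the discrepancy bookkeeping used repeatedly above, so I would present the proof via the first route (invoking Theorem \ref{lcsing}) and relegate this construction to a remark.
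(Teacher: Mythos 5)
Your primary argument --- applying the ``only if'' direction of Theorem \ref{lcsing} to $X$ to see that $Y$ is an lc Fano variety, then the ``if'' direction to $X_{mint}$ (using $R(-K_{X_{mint}})=R(-K_X)$ to identify the anticanonical models) to conclude that $X_{mint}$ is of Calabi-Yau type --- is correct and arguably cleaner than the paper's intended proof. The paper's one-line proof (``as in the proof of Lemma \ref{antipres}, the assertion follows from the negativity lemma'') signals the direct construction of a boundary on $X_{mint}$ by crepant pullback, which is exactly your alternate route; passing through $Y$ via the biconditional Theorem \ref{lcsing} lets you skip that discrepancy bookkeeping entirely, at the cost of invoking a global theorem where a local computation would do. Both routes are valid. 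However, your sketched alternate route contains a misattribution that you should fix before recording it as a remark: the bound that the coefficients of $\Delta_{mint}$ are at most $1$ (equivalently $a(E;X,\Delta)\geq -1$ for $\varphi$-exceptional $E$) is a consequence of $(X,\Delta)$ being lc and has nothing to do with $\varphi$-nefness. What $\varphi$-nefness of $K_{X_{mint}}$ together with the negativity lemma actually supplies is $a(E;X,0)\leq 0$, hence $a(E;X,\Delta)\leq a(E;X,0)\leq 0$, which is precisely what makes $\Delta_{mint}$ \emph{effective}. As written, your parenthetical cites the negativity lemma for the wrong half of the inequality and leaves effectiveness --- the step that is genuinely the point of using a minimal terminal resolution rather than an arbitrary terminal modification --- unjustified.
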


\begin{proof}
As in the proof of Lemma \ref{antipres}, the assertion follows from the negativity lemma.
\end{proof}

The following is the key lemma in the proof of Theorem \ref{antimapofcy} (cf. Lemma \ref{zarsqmfano}).

\begin{lemma}\label{zarofcy}
Let $X$ be a $\Q$-Gorenstein variety of Calabi-Yau type such that $-K_X$ is big and $R(-K_X)$ is finitely generated, and
let $d : X_d \to X$ be a dlt blow-up of an lc Calabi-Yau pair $(X, \Delta)$.
Then we have the following:
\begin{enumerate}
 \item $X_d$ is a variety of Calabi-Yau type having $Y=\Proj R(-K_X)$ as the anticanonical model.
 \item There is a small birational map $X_d \dashrightarrow X'$ to a $\Q$-factorial variety such that $-K_{X'}$ admits the good Zariski decomposition.
\end{enumerate}
\end{lemma}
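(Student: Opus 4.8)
The plan is to follow the Fano-type argument of Lemmas~\ref{antipres} and~\ref{zarsqmfano}, the only new feature being that $X_d$ carries a log canonical, rather than klt, Calabi--Yau structure; this is handled by using the crepancy of the dlt blow-up together with the specific lc Calabi--Yau pair $(X,\Delta)$ produced from the good birational Zariski decomposition of $-K_X$, as in the proof of Theorem~\ref{lcsing}.

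For (1), the first point is that the dlt blow-up is crepant: $K_{X_d}+\Delta_d=d^{*}(K_X+\Delta)$, so from $K_X+\Delta\sim_{\Q}0$ we get $K_{X_d}+\Delta_d\sim_{\Q}0$; since $(X_d,\Delta_d)$ is dlt, hence lc, the pair $(X_d,\Delta_d)$ is an lc Calabi--Yau pair and $X_d$ is a variety of Calabi--Yau type. For the statement about anticanonical models I would show that $H^0(\mathcal{O}_{X_d}(-mK_{X_d}))=H^0(\mathcal{O}_X(-mK_X))$ for every sufficiently divisible $m>0$, which then gives $\Proj R(-K_{X_d})=\Proj R(-K_X)=Y$. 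Writing $-K_{X_d}=d^{*}(-K_X)+F$, the crepancy relation identifies $F=\Delta_d-d^{*}\Delta$; this $F$ is $d$-exceptional because $d_{*}F=\Delta-\Delta=0$, and once one knows $F\ge 0$ the projection formula yields $d_{*}\mathcal{O}_{X_d}(-mK_{X_d})=\mathcal{O}_X(-mK_X)$. Thus everything reduces to the effectivity of $F$. Since $F=\sum_i(1-c_i)E_i$ with $c_i=\mult_{E_i}(d^{*}\Delta)=1+a(E_i;X,0)$, this is exactly the assertion that every prime divisor $E_i$ extracted by the dlt blow-up satisfies $a(E_i;X,0)\le 0$. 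For the pair $(X,\Delta)$ coming from the Zariski construction --- where the log canonical places of $(X,\Delta)$ are the prime divisors contracted to $Y$ on a common log resolution $h\colon W\to X$, $g\colon W\to Y$, together with the components of the negative part $N$ of $h^{*}(-K_X)=P+N$ and of the $(X,0)$-discrepancy divisor --- each such $E_i$ has nonpositive discrepancy over $(X,0)$, so $F\ge 0$. I expect this last bookkeeping, namely pinning down the log canonical places of the chosen pair and verifying $a(E_i;X,0)\le 0$, to be the main obstacle in the proof.

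For (2), I would repeat the proof of Lemma~\ref{zarsqmfano} once two preliminary facts are in place. By construction $X_d$ is $\Q$-factorial, and by part (1) we have $R(-K_{X_d})=R(-K_X)$, which is finitely generated and whose $\Proj$ equals $Y$, a variety of dimension $\dim X_d$; hence $-K_{X_d}$ is big. Now take the divisorial Zariski decomposition $-K_{X_d}=P+N$ of Nakayama (\cite[Chapter~III]{Na}), with $P$ movable; since $N$ is the fixed part, the ring $R(P)$ is finitely generated. Then, exactly as in \cite[Proposition~2.10]{O} --- which only requires $P$ to be a movable class with finitely generated section ring on a $\Q$-factorial variety, so that the Mori-dream-space hypothesis used in Lemma~\ref{zarsqmfano} is not needed here --- there is a small birational map $s\colon X_d\dashrightarrow X'$ to a $\Q$-factorial variety along which $P':=s_{*}P$ becomes semiample; setting $N':=s_{*}N$, the decomposition $-K_{X'}=P'+N'$ is the good Zariski decomposition, and also $X'$ has $Y=\Proj R(-K_{X'})$ as its anticanonical model since $s$ is small, which proves (2).
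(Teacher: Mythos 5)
Your reduction of (1) to the effectivity of $F=-K_{X_d}-d^{*}(-K_X)=-\sum_i a(E_i;X,0)E_i$ is formally sound, but the assertion that every $d$-exceptional divisor $E_i$ satisfies $a(E_i;X,0)\le 0$ is precisely the content of (1), and you do not establish it. Your worry about this is well-founded: a log canonical place of an lc Calabi--Yau pair can have \emph{positive} discrepancy over $(X,0)$. For instance, take $X=\P^{2}$ and $\Delta$ a nodal cubic; the dlt blow-up $d:X_d\to\P^{2}$ of the node extracts the lc place $E$ with $a(E;\P^{2},0)=1>0$, so $F=-E$ and $R(-K_{X_d})\subsetneq R(-K_X)$. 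You propose to handle only the specific $\Delta$ built from the Zariski construction, but the lemma is quantified over an arbitrary lc Calabi--Yau pair $(X,\Delta)$, so this changes the statement; and even for that particular $\Delta$ the claimed inventory of lc places together with the bound $a(E_i;X,0)\le 0$ is asserted rather than verified. That last step is the entire difficulty.

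The paper's proof is genuinely different and does not argue on $X_d$ at all. On a common log resolution $g:W\to X_d$, $h:W\to Y$, it considers the set $\mathcal{E}$ of prime divisors that are $h$-exceptional but not $g$-exceptional (the prime divisors of $X_d$ that collapse in $Y$) and bounds their discrepancies \emph{over $Y$}, not over $X$, using $K_W\sim_{\Q}-g_{*}^{-1}\Delta_d-G$ with $G$ the dlt discrepancy divisor of $g$. It then applies \cite[Theorem~4.1]{F2} to extract exactly $\mathcal{E}$ from $Y$, producing a $\Q$-factorial $b:X'\to Y$ whose prime divisors agree with those of $X_d$; hence $s:X_d\dashrightarrow X'$ is small. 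Since there is a morphism $X'\to Y$, Lemma~\ref{zarviaanti} immediately gives the good Zariski decomposition $-K_{X'}=b^{*}(-K_Y)-\sum_{E\in\mathcal{E}}a(E;Y,0)E$, and both (1) and (2) follow from the smallness of $s$. In particular the paper never needs $F\ge 0$, and it also avoids your argument for (2): replacing Lemma~\ref{zarsqmfano} by an Okawa-style run on the Nakayama positive part requires the unproved identity $R(P_{\sigma}(-K_{X_d}))=R(-K_{X_d})$, which is nontrivial precisely because $X_d$ is not a priori a Mori dream space in the Calabi--Yau case.
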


\begin{proof}
For the given dlt blow-up $d : X_d \to X$, there exists
an effective $\Q$-divisor $\Delta_d$ on $X_d$ such that $(X_d, \Delta_d)$ is a dlt pair and $K_{X_d}+\Delta_d = d^*(K_X + \Delta)$.
For some log resolution $g : W \rightarrow X_d$ of $(X_d,\Delta_d)$,
we may write
$$
-K_W=g^*(-(K_{X_d}+\Delta_d ))+ g_*^{-1} \Delta_d + G
$$
where $G$ is a $g$-exceptional divisor. Since $(X_d, \Delta_d)$ is dlt, every coefficient of $G$ is less than 1.
We may consider $h : W \rightarrow Y$ also as a log resolution of $Y$

Let $\mathcal{E}$ be the set of all prime $h$-exceptional divisors on $W$ that are not $g$-exceptional.
Then for any divisor $E\in \mc E$, we have $-1\leq a(E;Y,0)\leq 0$.
Indeed, since $K_X+\Delta\sim_\Q0$, we can write
$$
K_W=g^*(K_{X_d}+\Delta_d )-g_*^{-1} \Delta_d - G\sim_\Q  h^*(K_Y) + D
$$
where $D=-g_*^{-1}\Delta_d-G+h^*(-K_Y)$.
Thus for any $E\in\mc E$, we have $a(E;Y,0)=\mult_ED$ and
$-1\leq \mult_ED=\mult_E(-g_*^{-1} \Delta_d) \leq 0$ holds since  $h^*(-K_Y)$ is semiample.
Furthermore, note that $-1=a(E,Y,0)$ implies that $E$ is a component of $-g_*^{-1} \Delta_d$.
In particular, such $E$ is not $g$-exceptional and $E\in\mc E$.
Therefore, we can apply \cite[Theorem 4.1]{F2} for $\mc E$ and there is a birational morphism $b \colon X' \rightarrow Y$
such that $X'$ is $\Q$-factorial and the $b$-exceptional divisors are exactly the members of $\mathcal{E}$.
Thus there is a small birational map $s : X_d \dashrightarrow X'$.
Then by Lemma \ref{zarviaanti}, $-K_{X'}$ admits the good Zariski decomposition. We have shown the assertion (2). Now we may write
$$
-K_{X'} = b^*(-K_Y) - \sum_{E \in \mathcal{E}} a(E;Y,0) E.
$$
Since $ - \sum_{E \in \mathcal{E}} a(E;Y,0) E$ is an effective $b$-exceptional divisor, it follows that the anticanonical model of $X'$ is $Y$.
Thus we get the assertion (1).
\end{proof}

\begin{lemma}\label{flopscy}
Let $X$ be a $\Q$-Gorenstein variety of Calabi-Yau type such that $-K_X$ is big and $R(-K_X)$ is finitely generated, and  $d : X_d \to X$ a dlt blow-up of an lc Calabi-Yau pair $(X, \Delta)$.
Then every small birational map $s: X_d \dashrightarrow X'$ to a $\Q$-factorial variety such that $-K_{X'}$ admits the good Zariski decomposition can be decomposed into a finite sequence of log flops of lc Calabi-Yau pairs.
\end{lemma}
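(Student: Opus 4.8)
The plan is to run the argument of Lemma~\ref{flopsft}, replacing the klt Calabi--Yau structure by the dlt one supplied by the dlt blow-up. By construction of $d\colon X_d\to X$ there is an effective $\Q$-divisor $\Delta_d$ with $(X_d,\Delta_d)$ dlt and $K_{X_d}+\Delta_d=d^*(K_X+\Delta)\sim_\Q 0$, so $(X_d,\Delta_d)$ is a $\Q$-factorial dlt (hence lc) Calabi--Yau pair. Since $s$ is small, its strict transform $\Delta':=s_*\Delta_d$ gives an lc Calabi--Yau pair $(X',\Delta')$ with $K_{X'}+\Delta'\sim_\Q 0$, because discrepancies---hence the lc condition---are preserved under small birational maps.

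Next I would fix a general ample $\Q$-divisor $A'$ on $X'$ and let $A:=s_*^{-1}A'$ be its strict transform on $X_d$. As the strict transform of an ample divisor under a small birational map, $A$ is big and movable, and for $0<\epsilon\ll 1$ the pair $(X_d,\Delta_d+\epsilon A)$ is dlt with big boundary. I would then run the $(K_{X_d}+\Delta_d+\epsilon A)$-MMP with scaling of an ample divisor; it terminates by \cite{BCHM} (after a standard perturbation of $\Delta_d$ reducing to the klt case). Because $K_{X_d}+\Delta_d\sim_\Q 0$, this MMP is numerically an $A$-MMP: movability of $A$ rules out divisorial contractions (inductively, a contracted divisor would be forced into the restricted base locus of the current strict transform of $A$), so every step is a flip, and since $K+\Delta\sim_\Q 0$ is relatively trivial, each such flip is a log flop of the dlt, hence lc, Calabi--Yau pair occurring at that stage.

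Finally I would identify the output $\overline{X}$ of this MMP with $X'$. On $\overline{X}$ the strict transform $\overline{A}$ is nef, hence semiample by the base-point-free theorem, so $\overline{X}\to\Proj R(\overline{A})$ is a birational morphism; since the strict transforms of $A$ all correspond to $A'$ under the small maps, $R(\overline{A})\cong R(A')$, and as $A'$ is ample on $X'$ this yields $\Proj R(\overline{A})\cong X'$ and hence a birational morphism $\overline{X}\to X'$. This morphism is small (both $\overline{X}$ and $X'$ are obtained from $X_d$ by small birational maps), and both varieties are $\Q$-factorial, so it is an isomorphism; thus the sequence of flops constructed above decomposes $s$ into log flops of lc Calabi--Yau pairs. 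The step I expect to require the most care is staying inside the dlt (or at least lc) Calabi--Yau category throughout and verifying that \cite{BCHM} applies---i.e.\ that $(X_d,\Delta_d+\epsilon A)$ is dlt with big boundary and that every extremal contraction in the $A$-MMP is small. Alternatively, one can avoid the explicit MMP by invoking a ``flops connect models'' statement for $\Q$-factorial dlt pairs with numerically trivial log canonical divisor.
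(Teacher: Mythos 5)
Your proof is correct and follows essentially the same strategy as the paper's: produce the dlt Calabi--Yau pair $(X_d,\Delta_d)$ with $K_{X_d}+\Delta_d\sim_\Q 0$, run an MMP for $K_{X_d}+\Delta_d$ plus an auxiliary divisor whose pushforward to $X'$ is ample, observe that each step is trivial on $K+\Delta$ (hence a log flop of the lc Calabi--Yau pairs), and terminate at $X'$. The difference is in the choice of auxiliary divisor and the resulting termination/identification arguments. The paper scales by $A+s_*^{-1}A'$ with $A$ ample on $X_d$ and $A'$ ample on $X'$ chosen so that $s_*A+A'$ is ample on $X'$; this makes the boundary contain an ample summand on $X_d$, lets them invoke Birkar \cite[Theorem~1.9]{B} directly for termination, and since $K_{X'}+s_*\Delta_d+s_*A+A'$ is already ample the output of the MMP is $X'$ with no further argument. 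You instead scale by $\epsilon\, s_*^{-1}A'$ only; this forces you to (a) perturb the dlt big boundary to a klt big boundary before applying \cite{BCHM}, and (b) identify the MMP output $\overline{X}$ with $X'$ via semiampleness of $\overline{A}$, $\Proj R(\overline{A})\cong X'$, and the standard fact that a small birational morphism between $\Q$-factorial varieties is an isomorphism. Both routes are valid; yours is slightly longer but avoids needing the auxiliary ample $A$ on $X_d$. One remark on your ``movability rules out divisorial contractions'' step: rather than tracking movability of the strict transforms of $A$ inductively through flips (which requires a little care, since movability is not obviously preserved by a general flip), it is cleaner to note either that $R(X_i,A_i)\cong R(X',A')$ with $A'$ ample forces $\B(A_i)$, hence $\B_-(A_i)$, to have codimension $\geq 2$ at every stage; or, even more simply, that since $s$ is small every prime divisor of $X_d$ survives on $X'$, so the birational morphism $\overline{X}\to X'$ from the MMP output (which cannot extract divisors) shows the MMP contracted none.
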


\begin{proof}
There is an effective $\Q$-divisor $\Delta_d$ on $X_d$ such that $(X_d, \Delta_d)$ is a dlt pair and $K_{X_d}+\Delta_d = d^*(K_X + \Delta) \sim_{\Q} 0$.
We can take ample divisors $A$ on $X_d$ and $A'$ on $X'$ such that $s_*A + A'$ is ample on $X'$ and $(X_d, \Delta_d+ A+s^{-1}_* A')$ is a dlt pair.  Since $K_{X'}+s_*(\Delta_d) + s_*A + A' \sim_{\Q} s_*A + A'$ is ample, $(K_{X_d}+\Delta_d + A+s^{-1}_* A')$-MMP with scaling terminates with a log minimal model $(X', s_*(\Delta_d) + s_*A + A')$ by \cite[Theorem 1.9]{B}. Thus the assertion follows.
\end{proof}

Now we can prove Theorem \ref{antimapofcy} similarly as in the proof of Theorem \ref{antimap}. We leave the details to the interested readers.

\section{Comparison between log Calabi-Yau pairs and log weak Fano pairs}\label{cywfsec}

In \cite[Theorem 1.1]{HP2}, the authors proved that a surface with big anticanonical divisor whose anticanonical model is an lc Fano variety is of lc weak Fano type.
It is natural to ask whether the same holds true in higher dimensions.
However, Theorem \ref{lcsing} shows that such a variety is only of Calabi-Yau type.
The aim in this section is to compare the following two classes:
$$
CY_n:=\left\{ X \left|
\begin{array}{l}\text{$X$ is a $\Q$-Gorenstein variety of Calabi-Yau type such that}\\
 \text{$\dim X=n$, $-K_X$ is big and $R(-K_X)$ is finitely generated.}
 \end{array}\right.\right\}_, \text{ and}
$$
$$
WF_n:=\left\{ X \left|
\begin{array}{l}\text{$(X, \Delta)$ is an lc weak Fano pair for some $\Q$-divisor $\Delta$ such that}\\
 \text{$\dim X=n$, $-K_X$ is $\Q$-Cartier and $R(-K_{X})$ is finitely generated.}
 \end{array}\right.\right\}_.
$$

We can easily see that $CY_1=WF_1=\{ \P^1 \}$.
By \cite[Theorem 1.1]{HP2} and Theorem \ref{lcsing}, we also have $CY_2=WF_2$.
For $n>2$, Corollary \ref{wf=>cy} shows that $WF_n \subseteq CY_n$ and
the inclusion is strict in general by Example \ref{ex-lcCY neq lcwFano}.

\begin{corollaryint}\label{wf=>cy}
Let $(X, \Delta)$ be an lc weak Fano pair. If $-K_X$ is $\Q$-Cartier and $R(-K_X)$ is finitely generated, then $X$ is a variety of Calabi-Yau type.
In particular, $WF_n\subseteq CY_n$.
\end{corollaryint}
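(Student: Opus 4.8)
The plan is to exhibit, by a direct construction, a boundary $\Gamma\ge 0$ on $X$ for which $(X,\Gamma)$ is an lc Calabi--Yau pair. By definition this says precisely that $X$ is of Calabi--Yau type, and once combined with the standing hypotheses ($\dim X=n$, $-K_X$ big, $R(-K_X)$ finitely generated) it places $X$ in $CY_n$ and gives $WF_n\subseteq CY_n$. The first point to record is that $-K_X$ is big: writing $-K_X=-(K_X+\Delta)+\Delta$ exhibits $-K_X$ as the sum of the nef and big divisor $-(K_X+\Delta)$ and the effective divisor $\Delta$, hence it is big. This is the only place where the ``weak Fano'' rather than ``Fano'' form of the hypothesis needs a word, and it is immediate.

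Next I would show that $-(K_X+\Delta)$ is semiample. Since $K_X+\Delta$ is $\Q$-Cartier, choose $r>0$ with $D:=-r(K_X+\Delta)$ Cartier; then $2D-(K_X+\Delta)\sim_\Q-(2r+1)(K_X+\Delta)$ is nef and big, so the base-point-free theorem for lc pairs (\cite{F2}) shows that $|mD|$, and hence $|-m(K_X+\Delta)|$, is base-point-free for all sufficiently divisible $m>0$.

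Fixing such an $m$, I would then take a general member $G\in|-m(K_X+\Delta)|$ and set $\Gamma:=\Delta+\tfrac{1}{m}G$. On one hand $K_X+\Gamma\sim_\Q K_X+\Delta-(K_X+\Delta)=0$. On the other hand $(X,\Gamma)$ is lc: passing to a common log resolution $\mu\colon W\to X$ of $(X,\Delta)$, the pulled-back system $\mu^*|-m(K_X+\Delta)|$ is still base-point-free, so $\mu^*G$ is a general (hence reduced and smooth, and transverse to $\Exc(\mu)\cup\mu_*^{-1}\Delta$) member upstairs; consequently the discrepancies of $(X,\Gamma)$ coincide with those of $(X,\Delta)$ and are $\ge -1$, while the coefficients of $\Gamma$ remain at most $1$ (those of $\Delta$ are, $\tfrac{1}{m}G$ is reduced with coefficient $\tfrac{1}{m}\le 1$, and $G$ shares no component with $\Delta$ by generality). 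Thus $(X,\Gamma)$ is an lc Calabi--Yau pair, $X$ is of Calabi--Yau type, and $X\in CY_n$.

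The only step needing genuine care is the last one: checking that adding $\tfrac{1}{m}G$ to $\Delta$ does not destroy log canonicity, i.e. that the general member $G$ meets no log canonical centre of $(X,\Delta)$ in a forbidden way and shares no component with $\Delta$. This is a routine Bertini-type argument once one passes to a log resolution and uses base-point-freeness, but it should be written out or cited precisely. As an alternative to the two preceding paragraphs one could instead invoke Theorem \ref{lcsing}: since $-K_X$ is big and $R(-K_X)$ is finitely generated, it suffices to check that $Y=\Proj R(-K_X)$ is an lc Fano variety, and $-K_Y$ is automatically ample; but establishing that $Y$ has log canonical singularities reproduces essentially the same discrepancy bookkeeping, so the direct construction above is the more economical route.
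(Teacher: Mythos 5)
Your proof has a genuine gap, and it is precisely at the step you flagged as routine: the claim that $-(K_X+\Delta)$ is semiample. The base-point-free theorem for log canonical pairs (due to Ambro and Fujino) requires the auxiliary divisor $aL-(K_X+\Delta)$ to be nef and \emph{log big}, meaning big when restricted to every lc centre of $(X,\Delta)$, not merely nef and big. When $\Delta$ has components of coefficient~$1$ these are lc centres, and the restriction of $-(K_X+\Delta)$ to such a component can fail to be big. This is exactly what happens in Example~\ref{ex-lcwFano neq lcCY}: there $(X,E)$ is an lc weak Fano pair with $E\cong S$ a ruled surface over an elliptic curve, and $-(K_X+E)|_E=-K_S$ is nef of self-intersection zero, hence not big. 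Consequently $-(K_X+E)$ is \emph{not} semiample, and indeed $(X,E)$ admits no $\Q$-complement, so $X$ is not of Calabi--Yau type. (Also, your citation for the lc BPF theorem points to [F2], the dlt blow-up paper, rather than to the literature where that theorem actually lives.)

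A structural symptom of the gap is that your argument never uses the hypothesis that $R(-K_X)$ is finitely generated, yet the statement is false without it --- Example~\ref{ex-lcwFano neq lcCY} is constructed precisely to show this, and the paper uses the corollary contrapositively there to conclude $R(-K_X)$ is not finitely generated. The paper's proof takes a different and hypothesis-sensitive route: by Theorem~\ref{lcsing} it suffices to show $Y=\Proj R(-K_X)$ has lc singularities, and one passes to a common log resolution $W$ of $X$ and $Y$ where $f^*(-K_X)=P+N$ is the good Zariski decomposition (this is where finite generation enters, via Lemma~\ref{bigzariski}). Comparing $f^*(-K_X)-f^*\Delta$ (which is nef) with the positive part $P=g^*(-K_Y)$ shows $N-f^*\Delta\le 0$, and then the coefficient bookkeeping for the discrepancy divisor of $Y$ uses the log canonicity of $(X,\Delta)$ to keep all coefficients at most~$1$. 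If you want to salvage your approach, you would need to replace the BPF step by something that genuinely uses finite generation of $R(-K_X)$, for example working with the Zariski decomposition of $-K_X$ as the paper does, or establishing semiampleness of the positive part on a suitable model rather than of $-(K_X+\Delta)$ on $X$ itself.
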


\begin{proof}
By Theorem \ref{lcsing}, we only have to show that the anticanonical model $Y=\Proj R(-K_X)$ contains lc singularities. Let $f : W \rightarrow X$ be a log resolution of $(X, \Delta)$ such that $f^{*}(-K_X)$ admits the good Zariski decomposition $f^{*}(-K_X)=P+N$. Then we obtain the birational morphism $g : W \rightarrow Y:=\Proj R(-K_X)$, and we have $P=g^{*}(-K_Y)$. By taking further blow-ups, we may assume that $g : W \rightarrow Y$ is a log resolution of $(Y, 0)$. We may write
$$
-K_W - f^{-1}_*\Delta= f^{*}(-K_X -\Delta) + F = g^*(-K_Y) + N - f^{*}\Delta + F,
$$
where $F$ is a $f$-exceptional divisor. Note that $f^{*}(-K_X)=P+N$ is the Fujita-Zariski decomposition and $f^{*}(-K_X)-f^{*}\Delta$ is nef.
Thus we have  $f^{*}(-K_X)-f^{*}\Delta\leq P=g^*(-K_Y)$, and hence,  $-(N-f^{*}\Delta)$ is an effective $\Q$-divisor.
Since every coefficient of $F$ is at most 1, it follows that every coefficient of  $N - f^{*}\Delta + F + f^{-1}_*\Delta$ is also at most 1. Thus $Y$ contains lc singularities.
\end{proof}

If $R(-K_X)$ is not finitely generated, then Corollary \ref{wf=>cy} is no longer true by the following example.

\begin{exampleint}\label{ex-lcwFano neq lcCY}(cf. \cite[Example 5.5]{G})
Let $E$ be a non-split vector bundle of degree 0 and rank 2 on an elliptic curve $C$, and let $S:=\P(E)$ be a ruled surface. Consider the smooth projective variety $X:=\P(\mathcal{O}_S \oplus \mathcal{O}_S(-A))$, where $A$ is a very ample divisor on $S$ such that the natural map $\Sym^rH^0(\mathcal{O}_S(A)) \to H^0(\mathcal{O}_S(rA))$ is surjective for every integer $r>0$. Denote by $E$ the tautological divisor of $\mathcal{O}_S \oplus \mathcal{O}_S(-A)$. Then $(X,E)$ is an lc weak Fano pair (see \cite[Basic construction 5.1]{G}). We can easily show that $H^0(\mathcal{O}_X(-mK_X-mE))=H^0(\mathcal{O}_X(-mK_X))$ for every integer $m>0$. Thus if $(X, \Delta)$ is a Calabi-Yau pair for some $\Q$-divisor $\Delta$, then $\Delta \geq E$. Since $(X,E)$ does not admit $\Q$-complements (see \cite[Example 5.5]{G}), it follows that $X$ is not a variety of Calabi-Yau type. In particular, $R(-K_X)$ is not finitely generated by Corollary \ref{wf=>cy}.
\end{exampleint}

Corollary \ref{cor-CY is closed} shows that $CY_n$ is invariant under small birational maps whereas
the following example shows that $WF_n$ is not.
In particular, the inclusion $WF_n \subseteq CY_n$  is strict.

\begin{exampleint}\label{ex-lcCY neq lcwFano}
Let $C$ be an elliptic curve, and let $A$ be a very ample divisor on $C$. Consider the smooth projective variety $X:=\P(\mathcal{O}_C(A) \oplus \mathcal{O}_C(A) \oplus \mathcal{O}_C(-2A))$ with the fibration $\pi : X \to C$. Then we have the following.
\newline

\noindent\textbf{Claim.}
(1) $X$ is of Calabi-Yau type such that $-K_X$ is big and movable.\\
(2) The $\C$-algebra $\bigoplus_{m,n \geq 0} H^0(\mathcal{O}_X(-mK_X + n\pi^*A))$ is finitely generated. In particular, $R(-K_X)$ is finitely generated.

\begin{proof}[Proof of Claim]
By \cite[Proposition 2.10]{CG}, $X$ is of Calabi-Yau type. Since $2A + A + (-2A)=A$ is big, the tautological line bundle $\mathcal{O}_X(1)$ is big so that $\mathcal{O}_X(-K_X)=\mathcal{O}_X(3)$ is also big. To show that $-K_X$ is movable, we regard $X$ as a locally trivial $\P^2$-bundle over $C$ so that the global section of $H^0(\mathcal{O}_X(1))$ is locally given by $s_1 x_1 + s_2 x_2$, where $x_0, x_1, x_2$ are coordinates for $\P^2$ and $s_1, s_2$ are global sections of $H^0(\mathcal{O}_C(A))$. Since the base locus of $|\mathcal{O}_X(1)|$ is locally $V(x_0)$, it follows that $-K_X$ is movable. Finally, we can easily check that $\bigoplus_{m,n \geq 0} H^0(\mathcal{O}_X(-mK_X + n\pi^*A))$ is a splitting subring of $\left( \bigoplus_{n \geq 0} H^0(\mathcal{O}_C(nA)) \right) [x_0, x_1, x_2]$, and thus it is finitely generated.
\end{proof}

Now suppose that $(X, \Delta)$ is an lc weak Fano pair for some effective $\Q$-divisor $\Delta$. Since $H^1(\mathcal{O}_X) \neq 0$, by Kodaira vanishing (\cite{A}, \cite[Theorem 2.42]{F1}), $-(K_X + \Delta)$ is not ample. If the morphism $\varphi$ induced by $|-m(K_X + \Delta)|$ for a sufficiently large integer $m>0$ is a divisorial contraction, then it is the anticanonical morphism. However, $-K_X$ does not admit the Zariski decomposition on $X$, which is a contradiction to Lemma \ref{zarviaanti}. Thus $\varphi$ is small, and by the claim (2), there is $-(K_X + \Delta)$-flop $f : X \dashrightarrow X'$. Note that $(X', \Delta':=f_*\Delta)$ is an lc weak Fano pair. Then $(X', (1-\epsilon)\Delta')$ is an lc Fano pair for a sufficiently small rational number $\epsilon >0$. By Kodaira vanishing (\cite[Theorem 2.42]{F1}), $H^1(\mathcal{O}_{X'}) =0$, which is a contradiction. Thus there is no effective $\Q$-divisor $\Delta$ on $X$ such that $(X, \Delta)$ is an lc weak Fano pair.
\end{exampleint}

Although we have $WF_n\subsetneq CY_n$ in general, we see that the difference is actually `\emph{small}' by Corollary \ref{cy=>weakfano}.

\begin{lemmaint}\label{cy=lc}
Let $X$ be a $\Q$-Gorenstein variety of Calabi-Yau type such that $-K_X$ is big and $-K_X$ admits the good Zariski decomposition $-K_X=P+N$. Then $(X, N)$ is an lc weak Fano pair.
\end{lemmaint}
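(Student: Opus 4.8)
The plan is to show directly that $-(K_X+N)$ is nef and big, and that $(X,N)$ is lc; together with $\lfloor N\rfloor$ possibly nonzero but with coefficients $\leq 1$ (which will follow from the lc verification), this gives that $(X,N)$ is an lc weak Fano pair. For the positivity: since $-K_X=P+N$ is the good Zariski decomposition, we have $-(K_X+N)=P$, which is semiample by the definition of \emph{good} Zariski decomposition, hence nef; and $P$ is big because $-K_X$ is big and $N$ is effective (indeed $P$ and $-K_X$ have the same Iitaka dimension by property (3) of Definition \ref{def-Zardecomp}). So $-(K_X+N)$ is nef and big with no further work.

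The substance is verifying that $(X,N)$ is lc. First I would reduce to a known characterization via the anticanonical model: since $X$ is of Calabi-Yau type with $-K_X$ big and $R(-K_X)$ finitely generated (the latter being automatic from Lemma \ref{bigzariski}, as $-K_X$ admits the good Zariski decomposition), Theorem \ref{lcsing} tells us that $Y:=\Proj R(-K_X)$ is an lc Fano variety. By Lemma \ref{zarviaanti} the semiampleness of $P$ yields an anticanonical morphism $f:X\to Y$ with $P=f^*(-K_Y)$ and $N$ an $f$-exceptional effective divisor. Now I would compare discrepancies: for any prime divisor $E$ over $X$ (equivalently over $Y$), writing things out on a common log resolution $h:W\to X$, $h':W\to Y$, one gets $K_W+h_*^{-1}N = h^*(K_X+N) + \sum a_i E_i = -h^*P + \sum a_i E_i$ while $K_W = h'^*K_Y + \sum b_i E_i$ with all $b_i\geq -1$ since $(Y,0)$ is lc. Since $h^*P = h'^*(-K_Y)$, subtracting gives that the discrepancies of $(X,N)$ agree with those of $(Y,0)$ on divisors not appearing in $N$, and on components of $N$ one checks the coefficient is $\leq 1$ (equivalently discrepancy $\geq -1$) directly from the fact that $N$ is $f$-exceptional and $-N = P + K_X$ with $-K_X$ effectively movable — more cleanly, $a(E;X,N) = a(E;Y,0)$ for every divisor $E$ over $X$ because $K_X+N = f^*K_Y$ (as $\Q$-Cartier divisors, since $P = f^*(-K_Y)$ gives $K_X + N = -P = f^*K_Y$), so $(X,N)$ and $(Y,0)$ have literally the same discrepancies and hence $(X,N)$ is lc because $(Y,0)$ is.

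The cleanest route, then, is the last observation: from $-K_X = P+N$ and $P = f^*(-K_Y)$ we get the crucfor identity $K_X + N = f^*K_Y$, so $(X,N)\to (Y,0)$ is crepant; lc-ness of $(Y,0)$ (from Theorem \ref{lcsing}) transfers immediately to $(X,N)$, and the coefficient bound $\lfloor N\rfloor$ having coefficients at most $1$ is part of the lc condition being inherited. The main obstacle I anticipate is purely bookkeeping: making sure $K_X$ is $\Q$-Cartier (which it is, as $X$ is $\Q$-Gorenstein by hypothesis — this is implicit in "variety of Calabi-Yau type" with the section ring set-up) so that the identity $K_X+N = f^*K_Y$ makes sense and the crepant-pullback comparison of discrepancies is legitimate; once that is in place, everything else is formal. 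I would present it in that order: (i) $-(K_X+N)=P$ is nef and big; (ii) produce $f:X\to Y$ with $P = f^*(-K_Y)$ via Lemma \ref{zarviaanti}; (iii) deduce $K_X+N = f^*K_Y$; (iv) invoke Theorem \ref{lcsing} for lc-ness of $Y$ and transfer it crepantly to $(X,N)$.
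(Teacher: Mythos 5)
Your proof is correct, but it takes a genuinely different route from the one in the paper. You verify that $(X,N)$ is lc by passing to the anticanonical model: you use Lemma \ref{bigzariski} to get finite generation of $R(-K_X)$, Lemma \ref{zarviaanti} to produce the anticanonical morphism $f:X\to Y$ with $P=f^*(-K_Y)$, the identity $K_X+N=f^*K_Y$ (so $(X,N)\to(Y,0)$ is crepant), and then Theorem \ref{lcsing} to conclude that $(Y,0)$ is lc, which transfers crepantly to $(X,N)$. The paper instead stays entirely on $X$: it takes an lc Calabi--Yau pair $(X,\Delta)$ with $-K_X\sim_\Q\Delta$ and compares the two decompositions $-K_X=P+N$ and $-K_X\sim_\Q0+\Delta$ via the minimality of the negative part in the Fujita--Zariski decomposition (Remark \ref{remk-Zariski decomp}(2)), obtaining $N\leq\Delta$, whence $(X,N)$ is lc because $(X,\Delta)$ is. The paper's argument is a slick one-liner that avoids invoking Theorem \ref{lcsing} and doesn't need the anticanonical model at all; your argument is heavier (it pulls in a substantial theorem) but is more structural --- the crepant identity $K_X+N=f^*K_Y$ that you extract is genuinely informative and is in the same spirit as what the paper later exploits in Corollary \ref{cy=>weakfano}. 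Both are valid; the paper's is shorter and more self-contained.
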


\begin{proof}
Note that $P:=-(K_X+N)$ is nef and big.
There exists an effective $\Q$-divisor $\Delta$ such that $(X,\Delta)$ is an lc Calabi-Yau pair.
Considering the decompositions $-K_X=P+N\sim_\Q0+\Delta$, we have $N\leq\Delta$ by Remark \ref{remk-Zariski decomp} (2).
Thus $(X,N)$ is an lc pair.
\end{proof}

\begin{corollaryint}\label{cy=>weakfano}
Let $X$ be a $\Q$-Gorenstein variety of Calabi-Yau type such that $-K_X$ is big and $R(-K_X)$ is finitely generated, and let $d: X_d \to X$ be a dlt-blow-up of an lc Calabi-Yau pair $(X, \Delta)$.
Then there is a small birational map $X_d \dashrightarrow X'$ such that $(X', \Delta')$ is an lc weak Fano pair for some $\Q$-divisor $\Delta'$.
\end{corollaryint}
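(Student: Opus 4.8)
The plan is to obtain $X'$ directly from Lemma \ref{zarofcy} and then read off the boundary $\Delta'$ from Lemma \ref{cy=lc}. Since $X$ is a $\Q$-Gorenstein variety of Calabi--Yau type with $-K_X$ big and $R(-K_X)$ finitely generated, Lemma \ref{zarofcy} applies to the dlt blow-up $d\colon X_d\to X$: part (1) gives that $X_d$ is of Calabi--Yau type with anticanonical model $Y=\Proj R(-K_X)$ (in particular $-K_{X_d}$ is big and $R(-K_{X_d})$ is finitely generated), and part (2) gives a small birational map $s\colon X_d\dashrightarrow X'$ to a $\Q$-factorial variety $X'$ for which $-K_{X'}$ admits the good Zariski decomposition $-K_{X'}=P+N$. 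This $s$ is the map required by the statement, so it only remains to exhibit $\Delta'$.

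Next I would transport the hypotheses of Lemma \ref{cy=lc} to $X'$. Bigness of $-K_{X'}$ is automatic because $s$ is small birational, hence $H^0(\mathcal{O}_{X'}(-mK_{X'}))\cong H^0(\mathcal{O}_{X_d}(-mK_{X_d}))$ for all sufficiently divisible $m>0$; the same identification shows $R(-K_{X'})$ is finitely generated. That $X'$ is again of Calabi--Yau type then follows from Corollary \ref{cor-CY is closed} applied to the small birational map $X_d\dashrightarrow X'$ (alternatively, from Theorem \ref{lcsing} together with the fact, established inside the proof of Lemma \ref{zarofcy}, that the anticanonical model of $X'$ is the lc Fano variety $Y$).

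Finally, Lemma \ref{cy=lc} applies to $X'$: it is a $\Q$-Gorenstein variety of Calabi--Yau type, $-K_{X'}$ is big, and $-K_{X'}=P+N$ is a good Zariski decomposition, so $(X',N)$ is an lc weak Fano pair. Setting $\Delta':=N$ completes the proof.

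I do not expect a genuine obstacle here, since the substantive work has been absorbed into Lemmas \ref{zarofcy} and \ref{cy=lc} (which in turn rely on the dlt blow-up of \cite{F2}, the $\Q$-factorialization results of \cite{BCHM}, and the comparison of Fujita--Zariski decompositions in Remark \ref{remk-Zariski decomp}). The only point that warrants a moment of care is confirming that the property ``Calabi--Yau type'' descends along the small birational map $s$, which is precisely why I would route that step through Corollary \ref{cor-CY is closed} rather than attempting to build a Calabi--Yau boundary on $X'$ by hand.
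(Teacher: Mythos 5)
Your proposal is correct and follows essentially the same route as the paper: apply Lemma \ref{zarofcy} to get the small birational map $X_d \dashrightarrow X'$ with a good Zariski decomposition $-K_{X'}=P+N$, verify that $X'$ is again of Calabi--Yau type, and then apply Lemma \ref{cy=lc} to conclude that $(X',N)$ is an lc weak Fano pair. The only cosmetic difference is that the paper verifies the Calabi--Yau type of $X'$ via Theorem \ref{lcsing} (using $\Proj R(-K_X)=\Proj R(-K_{X'})$), whereas you route this step through Corollary \ref{cor-CY is closed}; both are valid, and you note the alternative yourself.
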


\begin{proof}
By Lemma \ref{zarofcy}, there is a small birational map $X_d \dashrightarrow X'$ such that $-K_{X'}$ admits the good Zariski decomposition $-K_{X'} = P+N$. Since $\Proj R(-K_X)= \Proj R(-K_{X'})$, by Theorem \ref{lcsing}, $X'$ is a variety of Calabi-Yau type. By Lemma \ref{cy=lc}, $(X', N)$ is an lc pair. Since $-(K_{X'}+N)=P$ is nef and big, $(X', N)$ is an lc weak Fano pair.
\end{proof}

\section*{Acknowledgements}
We would like to thank the referees for reading the manuscript carefully and providing helpful suggestions.
We also would like to thank Professor V. Shokurov for several comments.


\end{document}